\numberwithin{equation}{section}
\def\sO{{\mathscr O}}
\newcommand{\CC}{\mathbb{C}}
\newcommand{\PP}{\mathbb{P}}
\newcommand{\QQ}{\mathbb{Q}}
\newcommand{\RR}{\mathbb{R}}
\newcommand{\ZZ}{\mathbb{Z}}
\newcommand{\cal}{\mathcal}
\def\cC{{\cal C}}
\def\cF{{\cal F}}
\def\cH{{\cal H}}
\def\cL{{\cal L}}
\def\loc{\mathrm{loc}}
\def\and{\quad{\rm and}\quad}
\def\lra{\longrightarrow }
\def\mapright#1{\,\smash{\mathop{\lra}\limits^{#1}}\,}
\newtheorem{prop}{Proposition}[section]
\newtheorem{theo}[prop]{Theorem}
\newtheorem{lemm}[prop]{Lemma}
\newtheorem{coro}[prop]{Corollary}
\newtheorem{rema}[prop]{Remark}
\newtheorem{exam}[prop]{Example}
\newtheorem{ques}[prop]{Question}
\newtheorem{defi}[prop]{Definition}
\newtheorem{assu}[prop]{Assumption}
\def\beq{\begin{equation}}
\def\eeq{\end{equation}}
\def\virt{^{\mathrm{vir}} }
\def\virtloc{\virt_\loc}
\def\DM{Deligne-Mumford }
\def\zero{\mathrm{zero} }
\def\bl{\bigl(}
\def\br{\bigr)}
\def\redd{{\mathrm{red}}}
\def\fX{\mathfrak{X} }
\def\tE{\widetilde{E} }
\def\tX{\widetilde{X} }
\def\bp{\mathbf{p}} 
\def\bq{\mathbf{q}}
\def\tX{\widetilde{X} }
\def\tS{\widetilde{S}}
\def\tE{\widetilde{E}}
\def\cC{\mathcal{C}}
\def\zero{\mathrm{zero}}
\def\tsig{\tilde{\sigma}}
\def\tY{\widetilde{Y}}
\def\DM{Deligne-Mumford }
\def\ih{I\!H}
\def\ulambda{{\lambda}}
\def\ulog{^{\mathrm{log}}}
\def\urig{^{\mathrm{rig}}}
\def\ugamma{{\underline{\gamma}}}
\def\rmgn{\overline{M}_{g,n}}
\def\git{/\!/}
\title{Quantum singularity theory via cosection localization}
\date{2018.10.31.}
\author{Young-Hoon Kiem}
\address{Department of Mathematics and Research Institute
of Mathematics, Seoul National University, Seoul 08826, Korea}
\email{kiem@snu.ac.kr}
\author{Jun Li}
\address{Department of Mathematics,  Stanford University, CA 94305, USA}
\email{jli@math.stanford.edu}
\thanks{YHK was partially supported by Samsung Science and Technology Foundation SSTF-BA1601-01; JL was paritally supported by NSF grants DMS-1564500 and DMS-1601211.}
\keywords{FJRW invariant, cohomological field theory, cosection localization.}
\begin{document}
\begin{abstract} We generalize the cosection localized Gysin map to intersection homology and
Borel-Moore homology, which  provides us with a purely topological construction of the 
Fan-Jarvis-Ruan-Witten invariants and some GLSM invariants.\end{abstract}
\maketitle

\section{Introduction}\label{S1}

In this paper, we generalize the cosection localized Gysin map in \cite{KLc} to intersection homology 
and Borel-Moore homology, which  provides us with  a purely topological construction of the cohomological field theory of Fan-Jarvis-Ruan-Witten (FJRW for short) invariants in \cite{FJR} and some gauged linear sigma model (GLSM for short) invariants in \cite{FJRgl, CFGK}.

Gromov-Witten (GW for short) invariants in algebraic geometry enumerate curves in smooth projective varieties and have been the topic of intensive research for decades. 
The computation of GW invariants is known to be very hard and is still far from complete even for the most desired examples like the Calabi-Yau (CY for short) 3-fold in the projective space $\PP^4$ over $\CC$ defined by  the Fermat quintic $\sum_{i=1}^5z_i^5.$

Given a nondegenerate (quasi-)homogeneous polynomial $w$ on $\CC^N$, Witten in \cite{Witt} introduced an equation for smooth sections of spin bundles over curves and conjectured that the intersection numbers on the solution space should be equivalent to the GW invariants of the projective hypersurface defined by $w$. Rigorous mathematical theories for the new invariants, called the FJRW invariants, were provided by Fan-Jarvis-Ruan in \cite{FJR} through analysis and Polishchuk-Vaintrob in \cite{PV} through the categories of matrix factorizations. 
A purely algebraic theory through virtual cycles was provided for the narrow case in \cite{CLL} via the cosection localization principle of virtual cycles in \cite{KLc}. 
The FJRW invariants were generalized to the setting of GLSM in \cite{FJRgl} where the GLSM invariants are defined, for the narrow case only, by applying the cosection localization in \cite{KLc} along the line of arguments in \cite{CLp, CLL}.  
However for the general setting where broad markings are allowed, the insertion classes are real classes, often of odd real dimension. As the cosection localization developed in \cite{KLc} is entirely algebraic, it was difficult to expect that the cosection localization may provide an algebraic theory for the broad sectors as well. 

The goal of this paper is to show that it is indeed possible to construct all the FJRW invariants, as well as some GLSM invariants, by generalizing the cosection localization to real classes.
More precisely, we define the cosection localized Gysin maps for intersection homology (Theorem \ref{b18}) and Borel-Moore homology  (Theorem \ref{f18}). 
Then we construct cohomological field theories of the singularities $w^{-1}(0)$ (resp. the LG models $\CC^N\git G\mapright{w}\CC$), based on 
the state space given by intersection homology (resp. Borel-Moore homology)
which contains the state space in the FJRW theory (resp. GLSM theory). 
The homogeneity issue of \cite{PV} is trivial for us and our construction is purely algebraic and topological, in line with the constructions in \cite{KLc, KLk, Kspo}.

\bigskip

In order to explain the background and motivation, we need a more precise setup. 
Let $w:\CC^N\to \CC$ be a nondegenerate quasi-homogeneous polynomial (cf. \S\ref{S4.2}) like the Fermat quintic $\sum_{i=1}^5z_i^5.$
Let $\hat{G}$ be a subgroup of $(\CC^*)^N$ which makes $w$ semi-invariant, i.e. there is a surjective  homomorphism $\chi:\hat{G}\to \CC^*$ such that $w(g\cdot x)=\chi(g)\,w(x)$. The kernel of $\chi$ is a finite group $G$ and the pair $(w,G)$ is the input datum for the Landau-Ginzburg model of the singularity $w^{-1}(0)$. 
Consider the quotient stack 
\[ \fX=[(\CC^N\times \CC)/\hat{G} ]\]
where $\hat{G}$ acts on the first factor $\CC^N$ by the inclusion $\hat{G}\to (\CC^*)^N$ 
and on the second factor $\CC$ by the character $\chi^{-1}$. 
Notice that $w$ induces a $\hat{G}$-invariant function $\mathbf{w}(x,t)=t\cdot w(x)$ on $\CC^N\times \CC$. 
The stack $\fX$ admits two open substacks which are GIT quotients: 
By deleting the zero in the first factor, we get an (orbi-)line bundle $\fX_+$ on the projective orbifold $\CC^N-\{0\}/\hat{G}=:\PP^{N-1}_G$, which comes equipped with a function $\mathbf{w}|_{\fX_+}$ whose critical locus is the hypersurface $\PP w^{-1}(0)$ in $\PP^{N-1}_G$ defined by $w$. 
By deleting the zero in the second factor, we have the Landau-Ginzburg (LG for short) model 
$$\fX_-=[\CC^N/G\mapright{w} \CC]. $$ 
Witten's idea is that the curve counting (i.e. GW invariants) in the projective model $\PP w^{-1}(0)$ should be computable by the LG  model $w:\CC^N/G\to \CC$. 

Of course, the first step in this program, often called the LG/CY correspondence, of comparing curve countings in the  two models $\fX_\pm$ of the stack $\fX$, should be developing a curve counting theory for the LG model $(w,G)$. 
Following Witten's ideas in \cite{Witt}, Fan, Jarvis and Ruan studied intersection theory 
on the space of solutions to Witten's equation
for sections of spin bundles $\{L_i\}_{1\le i\le N}$ on stable pointed twisted curves $(C,p_1,\cdots, p_n)$. 
Through analysis, they defined the FJRW invariants which were proved to satisfy nice axioms
like the splitting principle which are codified
as a cohomological field theory \cite[\S4]{FJR}. (See \S\ref{S4.3} for the definition of a cohomological field theory.)
The term \emph{quantum singularity theory} in the title refers to a cohomological field theory from a singularity. 

Slightly later, Polishchuk and Vaintrob in \cite{PV} provided an alternative approach for the FJRW invariants. 
They constructed a universal matrix factorization on the moduli space of spin curves with sections
and then obtained a cohomological field theory by a Fourier-Mukai type transformation for matrix factorizations. 
This elegant theory however lacks in concrete interpretation in terms of cycles as in \cite{FJR} and hence some basic properties like the homogeneity of dimension are not so obvious but required another long paper.
Such a cycle theory in algebraic geometry was provided by  H.-L. Chang, J. Li and W.-P. Li   
in \cite{CLL}, where the authors proved that the FJRW invariants for narrow sectors are 
in fact integrals on the cosection localized virtual cycle of the moduli space $X=\{(C,p_j,L_i,x_i)\}$ 
of spin curves $(C,p_j,L_i)$ with sections $x_i\in H^0(L_i)$.
Here the narrow sector means that the stabilizer of each marking $p_j$ has only one fixed point $0$ in the fiber
$\oplus_{i=1}^N L_i|_{p_j}$ of the spin bundles. Otherwise, we say that the marking is broad. 
In \cite{FJRgl}, by applying the cosection localized virtual cycle construction in \cite{CLp, CLL} to the 
more general GLSM setting where $G$ is not necessarily finite (cf. \S\ref{S5}), Fan, Jarvis and Ruan defined the GLSM invariants
for narrow sectors. Quite recently, by generalizing the matrix factorization construction of Polishchuk and Vaintrob in \cite{PV},
Ciocan-Fontanine, Favero, Guere, Kim and Shoemaker in \cite{CFGK} defined all (broad and narrow) GLSM invariants 
for convex hybrid GLSMs.\footnote{We thank Bumsig Kim for drawing our attention to \cite{CFGK} after completing the first version of this paper.}

In the presence of broad markings, the cosection localization in \cite{KLc} does not work because the cosection of the obstruction sheaf is not well defined everywhere. 
Moreover, the insertions for broad markings are not algebraic, often odd real dimensional, and thus one is led to believe that cosection localization may not provide us with the entire FJRW invariants or GLSM invariants. 
In this paper, we generalize the cosection localized Gysin map to topological cycles, 
and construct quantum singularity theories as well as some GLSM theories,
as a direct application of the cosection localization. 

\bigskip

Let $Y$ be a \DM stack and $E$ be a vector bundle of rank $r$ equipped with a section $s$ whose zero locus is $X$.
Then the cap product with the Euler class $e(E,s)\in H^{2r}(Y,Y-X)$ gives us the Gysin map
$$ s^!:H_i(Y)\lra H_{i-2r}(X),\quad s^!(\xi)=\xi\cap e(E,s) $$
where $H_i(\cdot)$ denotes the Borel-Moore homology. For the FJRW (or GLSM in general) theory however, this is not enough
since the moduli space $X$ is not compact and so we cannot integrate cohomology classes over $X$ to obtain invariants. 
On the other hand, in the FJRW theory, we have a homomorphism (cosection) $\sigma:E\to \sO_Y$ 
satisfying $\sigma\circ s=0$ and the common zero locus of $s$ and $\sigma$ is the moduli space $S$
of (rigidified) stable spin curves which is certainly compact.  
In this paper, we prove that the images by $s^!$ of intersection homology classes 
are in fact supported on $S$. In other words, 
we construct a homomorphism (Theorem \ref{b18})
$$ s^!_\sigma:\ih_i(Y)\lra H_{i-2r}(S) $$
by generalizing the arguments in \cite{KLc} and using the decomposition theorem in \cite{BBD},
where $\ih_*(\cdot)$ denotes the middle perversity intersection homology.
The restriction to the broad markings together with rigidification gives us a smooth morphism
$q:Y\to Z$ where the intersection homology of $Z$ contains the $n$th power of the state space 
$\cH=\oplus_{\gamma\in G}\ih_{N_\gamma}(w_\gamma^{-1}(0))^G$ for the FJRW theory. 
Composing the pullback 
$$\cH^{\otimes n}\hookrightarrow \ih_*(Z)\mapright{q^*} \ih_*(Y) $$
with $s^!_\sigma$ and the pushforward $\frac1{\deg\,\mathrm{st}} \mathrm{st}_*$ for the forgetful map $\mathrm{st}:S\to \rmgn$, 
we obtain homomorphisms
$$ \Omega_{g,n}=\frac{(-1)^D}{\deg\,\mathrm{st}} \mathrm{st}_*\circ s^!_\sigma\circ q^*:\cH^{\otimes n}\lra H_*(\rmgn)\cong H^*(\rmgn)$$
which we prove to satisfy all the axioms for cohomological field theories (Theorem \ref{a19}). Here $D$ is minus the Riemann-Roch number of the spin bundles.
The construction for GLSMs is similar with $\ih$ replaced by Borel-Moore homology  (cf. Theorem \ref{f18} and \S\ref{S5}). 

We expect that the quantum singularity theory via cosection localization constructed in this paper does coincide with the cohomological field theory of Fan-Jarvis-Ruan in \cite{FJR}. 
For narrow sectors, all constructions give us the same invariants as proved in \cite{CLL}. 
However, as we are unable to follow the details of the construction 
in \cite{FJR}, the equivalence of the two constructions for broad sectors is still open.

\bigskip

In \cite{CKLa}, we construct an algebraic virtual cycle $[\tilde{X}]\virt_{\mathrm{loc}}$ after blowing up 
$w^{-1}(0)$ at the origin and prove that the convolution product (i.e. a Fourier-Mukai type transform) with kernel $[\tilde{X}]\virt_{\mathrm{loc}}$ coincides with the cohomological field theories defined in this paper. 

\bigskip

The layout of this paper is as follows: In \S\ref{S2}, we recall and establish useful properties about Borel-Moore homology and intersection homology. 
In \S\ref{S3}, we construct the cosection localized Gysin maps for intersection homology and Borel-Moore homology. 
In \S\ref{S4}, we construct quantum singularity theories by applying the cosection localized Gysin maps.
In \S\ref{S5}, we provide a topological construction of some GLSM invariants.

\bigskip

All varieties, schemes and stacks are separated and defined over $\CC$ in this paper. We will use only the classical topology of algebraic varieties and schemes.
All the topological spaces in this paper are locally compact Hausdorff countable CW complexes.  
All the cohomology groups in this paper have complex coefficients although most of the arugments work for rational coefficients. 

We thank Huai-Liang Chang for extended discussions that provided impetus to  developing the current theory.  
We also thank Jinwon Choi, Wei-Ping Li and Yongbin Ruan for useful discussions.

\bigskip

\section{Borel-Moore homology and intersection homology}\label{S2}
In this section, we recall and establish some useful properties of the Borel-Moore homology and intersection homology from \cite{Iver, Bred, FulY, BBD, Borel, GM83, GM85}.

\subsection{Borel-Moore homology}\label{S2.1}
Let $X$ be a locally compact Hausdorff countable CW complex. 
Let $\cC^p_X$ denote the sheafification of the presheaf assigning
$$U\mapsto C^p(U)=\{\text{singular }p\text{-cochains on }U \}.$$
Then the complex
\beq\label{b10} \cC_X^\cdot: 0\lra \cC_X^0\lra \cC_X^1\lra \cC_X^2\lra \cdots \eeq
of singular cochains is a flabby resolution of the constant sheaf $\CC$.
The Verdier dual $\mathbb{D}_X(\cC_X^\cdot)$ of $\cC_X^\cdot$ is defined as the complex of flabby sheaves
$$U\mapsto \mathrm{Hom}(\Gamma_c(U;\cC_X^\cdot),\CC).$$
Now the Borel-Moore homology of $X$ is defined as 
\beq\label{b11} H_i(X)=H^{-i}\left( \mathrm{Hom}(\Gamma_c(X;\cC_X^\cdot),\CC)\right)
=H^{-i}\left( \mathrm{Hom}(R\Gamma_c(X;\CC),\CC)\right).\eeq

Another way to think of Borel-Moore homology is to consider
the complex of geometric chains. Here a geometric chain refers to a stable limit
of locally finite simplicial chains under refinements of triangulations \cite[I.2.1]{Borel}.
The complex of geometric chains is quasi-isomorphic to $\mathrm{Hom}(\Gamma_c(X;\cC_X^\cdot),\CC)$
and hence its homology coincides with the Borel-Moore homology. 

When $X$ is an algebraic variety, we have the cycle class map
$$h_X:A_*(X)\to H_*(X)$$ which sends an irreducible subvariety $\xi$ of $X$ to the geometric chain of $\xi$ obtained by a triangulation \cite[Chapter 19]{Fulton}. 

\subsection{Functoriality}\label{S2.2}
If $f:X\to Y$ is a proper morphism of varieties, 
the functorial pushforward map
\beq\label{b3} f_*:H_*(X)\lra H_*(Y)\eeq
is defined by sending a geometric chain $c$ on $X$ to its image $f(c)$ on $Y$. 
For an open inclusion $\imath:U\to X$ and its complement $\jmath:Z=X-U\to X$, 
we have the long exact sequence
\beq\label{b4} \cdots\lra H_i(Z)\mapright{\jmath_*}H_i(X)\mapright{\imath^*} H_i(U)\lra H_{i-1}(Z)\lra \cdots\eeq
where $\imath^*$ denotes the restriction of geometric chains to $U$ \cite[IX.2.1]{Iver}.

For a smooth morphism $f:X\to Y$ of algebraic varieties, taking the inverse image of a geometric cycle gives us the pullback homomorphism of intersection homology groups
\beq\label{c23} f^*:\ih^{\mathbf{p}}_i(Y)\lra \ih^{\mathbf{q}}_{i+\dim_\RR X-\dim_\RR Y}(X), \quad \mathbf{p}\le \mathbf{q}\eeq
where $\ih^{\mathbf{p}}_*(Y)$ and $\ih^{\mathbf{q}}_*(X)$ denote the intersection homology with respect to the perversities ${\mathbf{p}}$ and ${\mathbf{q}}$ respectively \cite{GM83}. 

When $\mathbf{p}$ is the top perversity ${\mathbf{t}}(i)=i-2$ and $X$ is normal, 
the intersection homology $\ih^{\mathbf{t}}_i(X)$ is the Borel-Moore homology $H_i(X)$ \cite[V.2.12]{Borel}. 
By \eqref{c23} with $X=Y$, for the middle perversity $\mathbf{m}(2i)=i-1$, we have a canonical homomorphism 
\beq\label{b21}\epsilon_X: \ih_i^{\mathbf{m}}(X)\lra \ih^{\mathbf{t}}_i(X)=H_i(X).\eeq
As \eqref{b21} and \eqref{c23} are defined by taking inverse images by the identity map and $f$ respectively, we have a commutative diagram
\beq\label{c24}\xymatrix{
\ih_i^{\mathbf{m}}(Y)\ar[r]^{f^*} \ar[d]_{\epsilon_Y} &\ih^{\mathbf{m}}_{i+l}(X)\ar[d]^{\epsilon_X}\\
H_i(Y)\ar[r]^{f^*}  &H_{i+l}(X)
}\eeq
where $l=\dim_\RR X-\dim_\RR Y.$

More generally, a morphism $f:X\to Y$ of irreducible algebraic varieties is called \emph{placid} (\cite{GM85}) if there is a stratification of $Y$ such that 
\beq\label{b72}\mathrm{codim}\, T\le \mathrm{codim}\, f^{-1}(T)\eeq
 for each stratum $T$ of $Y$.
For example, flat morphisms are placid.  
\begin{prop}\label{b71} \cite[Proposition 4.1]{GM85} 
Let $\mathbf{p}$ be any perversity and $l=\dim_\RR X-\dim_\RR Y.$
If $f$ is placid, the pullback of generic chains induces a homomorphism in intersection homology
\beq\label{b70} f^*:\ih_i^{\mathbf{p}}(Y)\lra \ih_{i+l}^{\mathbf{p}}(X). \eeq
If $f$ is proper and placid, then we have the pushforward 
\beq\label{c1} f_*:\ih_i^{\mathbf{p}}(X)\lra \ih_{i}^{\mathbf{p}}(Y), \quad [c]\mapsto [f(c)]. \eeq
They fit into the commutative diagrams
\beq\label{c2}
\xymatrix{
\ih^{\mathbf{m}}_i(Y)\ar[r]^{f^*}\ar[d]_{\epsilon_Y} &\ih^{\mathbf{m}}_{i+l}(X)\ar[d]^{\epsilon_X}\\
H_i(Y) \ar[r]^{f^*} & H_{i+l}(X),
}\quad
\xymatrix{
\ih_i^{\mathbf{m}}(X)\ar[d]_{\epsilon_X}\ar[r]^{f_*} & \ih_{i}^{\mathbf{m}}(Y) \ar[d]^{\epsilon_Y}\\
H_i(X)\ar[r]^{f_*} & H_{i}(Y),
}\eeq
\end{prop}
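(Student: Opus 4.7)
My plan is to construct the pullback $f^*$ and pushforward $f_*$ directly at the level of geometric chains, and then observe that the commutative diagrams \eqref{c2} follow tautologically from the chain-level definitions. This is the route taken in \cite{GM85}, which is the stated reference, and my proposal largely unpacks that argument.

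Recall from \S\ref{S2.1} that a class in $\ih_i^{\mathbf{p}}(Y)$ is represented by a geometric $i$-chain $c$ which, together with its boundary, is $\mathbf{p}$-allowable: for every stratum $T$ in a chosen Whitney stratification of $Y$,
\[
\dim_\RR(|c|\cap T)\le i-\mathrm{codim}_\RR T+\mathbf{p}(\mathrm{codim}_\RR T).
\]
To define $f^*c$, I first refine the stratifications of $X$ and $Y$ so that $f$ is a stratified map satisfying \eqref{b72}, and choose a generic representative $c$ of a given class. Then I set $f^*c:=f^{-1}(c)$, which is naturally a geometric chain of dimension $i+l$ with $l=\dim_\RR X-\dim_\RR Y$. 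For a stratum $S$ of $X$ lying in $f^{-1}(T)$, the estimate $\dim_\RR(|f^{-1}c|\cap S)\le \dim_\RR(|c|\cap T)+l$ combined with \eqref{b72} and the monotonicity of $\mathbf{p}$ yields the $\mathbf{p}$-allowability of $f^{-1}c$ on $X$. Since $\partial(f^{-1}c)=f^{-1}(\partial c)$, this is a chain map and descends to homology.

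For the pushforward, if $f$ is additionally proper, then for any $\mathbf{p}$-allowable $i$-chain $c$ on $X$ the set-theoretic image $f(|c|)$ is closed of dimension $\le i$, and the allowability of $f(c)$ at a stratum $T$ of $Y$ follows from $|f(c)|\cap T\subset f(|c|\cap f^{-1}(T))$ together with \eqref{b72}. As for the two diagrams in \eqref{c2}, both vertical maps $\epsilon_X$ and $\epsilon_Y$ are induced by forgetting the allowability constraint and remembering only the underlying geometric chain (cf. \eqref{b21}), while the horizontal arrows in each row are given by the same geometric operation -- inverse image for $f^*$ and direct image for $f_*$. Commutativity therefore holds on the nose at the level of chain representatives.

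The delicate point I expect is the transversality issue: one must ensure that every class in $\ih_i^{\mathbf{p}}(Y)$ has a generic representative $c$ for which $f^{-1}(c)$ is well defined as a geometric chain and the allowability computation above applies, and that the resulting homology class is independent of the choice. Both are handled by the moving-lemma framework of \cite{GM83, GM85}, which I would invoke rather than reproving. Once that is in hand, the entire statement reduces to the elementary dimension bookkeeping sketched above.
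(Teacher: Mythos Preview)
Your proposal is correct and follows essentially the same route as the paper's proof, which likewise defines $f^*$ and $f_*$ at the level of geometric chains, checks $\mathbf{p}$-allowability via the placid condition \eqref{b72}, and observes that the diagrams \eqref{c2} commute tautologically. The only refinement worth noting is that the paper names the specific tool for the transversality step---McCrory's transversality theorem---rather than a generic ``moving-lemma framework''; this is the result guaranteeing that any intersection homology class on $Y$ has a representative dimensionally transverse to every stratum of $X$, which is exactly the genericity input you flagged as delicate.
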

\begin{proof} For reader's convenience, we outline the proof. Since $f$ is analytic, there exists a stratification of $X$ with which $f$ is a stratified map \cite[\S1.2]{GM83}. By McCrory's transversality, a cohomology class $\xi\in \ih_i^{\mathbf{p}}(Y)$ is represented by a geometric chain $c$, dimensionally transversal to any stratum in $X$. Hence the inverse image $f^{-1}(c)$ is $\mathbf{p}$-allowable by \eqref{b72}. The class $f^*(\xi)$ is the class represented by $f^{-1}(c)$.

If $c$ is a geometric chain on $X$ representing a class $[c]\in \ih_i^{\mathbf{p}}(X)$, its image $f(c)$ is $\mathbf{p}$-allowable by the placid condition and hence defines a class $[f(c)]\in \ih_i^{\mathbf{p}}(Y)$.
See \cite{GM85} for details. (See \cite{Kie} for a generalization.) 

The commutativity of the diagrams in \eqref{c2} is obvious from the definitions. 
\end{proof}

From now on, we will mostly use the middle perversity intersection homology and 
denote $\ih^{\mathbf{m}}_i(\cdot)$ by $\ih_i(\cdot)$. 

The pushforward and the pullback are compatible in the following sense: For any fiber diagram 
\[\xymatrix{
X'\ar[r]^u\ar[d]_g &X\ar[d]^f\\
Y'\ar[r]^v & Y
}\]
with $f, g$ placid and $v$ proper, we have
\beq\label{c25} f^*\circ v_*=u_*\circ g^*:H_i(Y')\lra H_{i+l}(X)\eeq
since $f^{-1}(v(c))=u(g^{-1}(c))$ for a geometric chain $c$.

For a projective birational morphism $f:X\to Y$ of normal algebraic varieties, the decomposition theorem of Beilinson, Bernstein, Deligne and Gabber in \cite{BBD} tells us that the middle perversity intersection homology sheaf decomposes as
\beq\label{b25}Rf_*\mathcal{IC}_X\cong \mathcal{IC}_Y\oplus \mathcal{F}\eeq
for a constructible sheaf complex $\mathcal{F}$ on the locus in $Y$ where $f$ is not an isomorphism.
Here $\mathcal{IC}_X$ is the sheaf complex of ${\mathbf{m}}$-allowable chains whose hypercohomology is the middle perversity intersection homology $\ih_*(X)$ of $X$.
By taking the hypercohomology, we have a decomposition
$$ \ih_i(X)\cong \ih_i(Y)\oplus \mathbb{H}_i(\cF)$$
which gives us an injection
\beq\label{b22} \delta:\ih_i(Y)\lra \ih_i(X)\eeq
which restricts to the identity map $\ih_i(U)=\ih_i(f^{-1}U)$ on the open subset $U$ 
of $Y$ over which $f$ is an isomorphism.  
Combining \eqref{b21} and \eqref{b22}, we have a homomorphism
\beq\label{b23} \ih_i(Y)\mapright{\delta} \ih_i(X)\mapright{\epsilon_X} H_i(X).\eeq
The canonical homomorphism \eqref{b21} for $Y$ together with \eqref{b23} give us a (not necessarily commutative) diagram
\beq\label{b28}\xymatrix{
\ih_i(X)\ar[r]^{\epsilon_X} & H_i(X)\ar[d]^{f_*}\\
\ih_i(Y)\ar[r]^{\epsilon_Y}\ar[u]^{\delta} & H_i(Y)
}\eeq
\begin{lemm}\label{b29}
For $\xi\in \ih_i(Y)$, there are $\zeta\in H_i(X)$ and $\eta\in H_i(Z)$ such that 
$$\epsilon_Y(\xi)=f_*(\zeta)+\jmath_*(\eta)\and \epsilon_Y(\xi)|_{U}=\zeta|_{f^{-1}U}$$
where $\jmath:Z\to Y$ is the inclusion of the complement of the open subset $U$ over which $f$ is an isomorphism. 
\end{lemm}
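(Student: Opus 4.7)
The natural candidate for $\zeta$ is the image of $\xi$ under the composition in \eqref{b23}, namely $\zeta := \epsilon_X(\delta(\xi)) \in H_i(X)$. With this choice, the plan is to show that $\epsilon_Y(\xi) - f_*(\zeta)$ vanishes after restriction to $U$, and then invoke the Borel–Moore long exact sequence \eqref{b4} to produce $\eta \in H_i(Z)$.

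First I would verify the second identity, $\epsilon_Y(\xi)|_U = \zeta|_{f^{-1}U}$. This is essentially built into the construction of $\delta$: the decomposition \eqref{b25} restricts to the tautological isomorphism $\mathcal{IC}_Y|_U \cong Rf_*\mathcal{IC}_X|_U$ over $U$, since $f$ is an isomorphism there and $\mathcal{F}$ is supported on the complement. Consequently the injection $\delta$ restricts to the identity $\ih_i(U) = \ih_i(f^{-1}U)$, so $\delta(\xi)|_{f^{-1}U} = \xi|_U$. Applying $\epsilon$ and using the evident fact that the canonical map $\epsilon$ commutes with restriction to an open subset (both are induced by refinement of cycles with respect to a stratification, cf.\ \eqref{b21} and the discussion preceding it) yields $\zeta|_{f^{-1}U} = \epsilon_U(\xi|_U) = \epsilon_Y(\xi)|_U$, where we identify $f^{-1}U$ with $U$ via $f$.

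Next I would deduce the first identity. Since $f$ is proper, $f_*$ is defined on Borel–Moore homology by \eqref{b3} and is compatible with restriction to the open subset $U$ in the obvious way; under the identification $f:f^{-1}U\xrightarrow{\sim} U$ the pushforward $f_*$ is just the identity, so
\[ (f_*\zeta)\bigr|_U \;=\; \zeta\bigr|_{f^{-1}U} \;=\; \epsilon_Y(\xi)\bigr|_U. \]
Therefore $\epsilon_Y(\xi) - f_*(\zeta) \in H_i(Y)$ restricts to zero in $H_i(U)$, so by the exactness of \eqref{b4} there exists $\eta \in H_i(Z)$ with $\jmath_*(\eta) = \epsilon_Y(\xi) - f_*(\zeta)$, giving the desired decomposition.

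The only non-formal input is the decomposition theorem, which supplies $\delta$ with the critical property that it is the identity over $U$; everything else is functoriality of $\epsilon$ under restriction and the long exact sequence of the pair $(Y,U)$. I do not foresee a genuine obstacle, but the one point that must be handled with some care is ensuring that the naturality of $\epsilon$ with respect to open restriction is available in the relevant generality (normal varieties, middle perversity), which follows from the sheaf-theoretic definition of $\mathcal{IC}$ since both restriction maps are induced by the canonical morphism of sheaf complexes $\mathcal{IC}_Y \to \mathbb{D}_Y(\cC_Y^\cdot)[-\dim_\RR Y]$ compatibly with open inclusion.
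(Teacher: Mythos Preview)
Your proof is correct and follows essentially the same approach as the paper: you set $\zeta=\epsilon_X(\delta(\xi))$, use that $\delta$ and $\epsilon$ restrict to the identity and $\epsilon_U$ over $U$ to get $\epsilon_Y(\xi)|_U=\zeta|_{f^{-1}U}$, and then invoke the long exact sequence \eqref{b4} to produce $\eta$. The paper's proof is just a terser version of exactly this argument.
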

\begin{proof} Let $\zeta=\epsilon_X(\delta(\xi))$.
Then $f_*(\zeta)-\epsilon_Y(\xi)$ is zero when restricted to $U$ because the vertical  arrows in \eqref{b28} are the identity maps when restricted to $U$ and the two horizontal maps coincide with $\epsilon_U$ when restricted to $U$. By \eqref{b4}, $f_*(\zeta)-\epsilon_Y(\xi)$ lies in the image of $\jmath_*$ and the lemma follows.   
\end{proof}

\subsection{Cap product}\label{S2.35}
If $Z\subset X$ is closed, we have the cap product (cf. \cite[IX.3]{Iver})
\beq\label{b5} \cap : H_i(X)\otimes H^p(X,X-Z)\lra H_{i-p}(Z)\eeq
which satisfies nice properties like the projection formula (cf. \cite[IX.3.7]{Iver})
$$f_*(\alpha\cap f^*\eta)=f_*\alpha\cap \eta,\quad \text{for } \alpha\in H_i(X), \eta\in H^p_W(Y)=H^p(Y,Y-W)$$
when $f:X\to Y$ is proper and $W\subset Y$ is closed. 
For closed sets $A$ and $B$ of a topological space $X$, by \cite[IX.3.4]{Iver}, 
\beq\label{c33} (\xi\cap \alpha)\cap \beta|_A=\xi\cap(\alpha\cup \beta), \quad \xi\in H_*(X), \alpha\in H^*_A(X), \beta\in H^*_B(X)\eeq
When $Z=X$, \eqref{b5} is the ordinary cap product.

When $X$ is a closed subset of an oriented (topological) manifold $M$, the cap product by the orientation class $[M]\in H_{\dim_\RR M}(M)$ of $M$ gives us an isomorphism
\beq\label{b6} H^{\dim_\RR M-i}(M,M-X)= H^{\dim_\RR M-i}_X(M)\mapright{\cong} H_i(X).\eeq
In particular, if $X$ is a smooth oriented manifold with its orientation class $[X]$, by taking $M=X$, we have the Poincar\'e duality
\beq\label{b7} [X]\cap :H^{\dim_\RR X-i}(X)\mapright{\cong} H_i(X).\eeq

We have the following lifting property analogous to Lemma \ref{b29}.
\begin{lemm}\label{f19}
Let $\rho_M:\widetilde{M}\to M$ be a proper birational morphism of smooth varieties  
and $\widetilde{X}=\widetilde{M}\times_MX$ for a closed subset $X$ of $M$.
Let $U$ be an open set in $M$ over which $\rho_M$ is an isomorphism. Then for $\xi \in H_i(X)$, there are $\zeta\in H_i(\widetilde{X})$ and $\eta\in H_i(Z)$ such that
\[ \xi=\rho_*\zeta+\jmath_*\eta \and \xi|_{X\cap U}=\zeta|_{\rho^{-1}(X\cap U)} \]
where $\rho=\rho_M|_{\widetilde{X}}:\widetilde{X}\to X$ and $\jmath:Z\to X$ is the inclusion
of $Z=X-U$.
\end{lemm}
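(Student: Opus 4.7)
The plan is to imitate the proof of Lemma \ref{b29}, but with the decomposition theorem applied to the constant sheaf on $\widetilde{M}$ rather than to the intersection complex. Write $m = \dim_\CC M = \dim_\CC \widetilde{M}$. Because $\rho_M$ is a birational morphism between irreducible smooth varieties of the same dimension, we have $\mathcal{IC}_M = \CC_M[2m]$ and $\mathcal{IC}_{\widetilde{M}} = \CC_{\widetilde{M}}[2m]$, and the decomposition theorem of \cite{BBD} yields
\[ R\rho_{M*}\CC_{\widetilde{M}} \;\cong\; \CC_M \;\oplus\; \mathcal{F}, \]
where $\mathcal{F}$ is a constructible complex on $M$ supported on $M - U$.

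Since $\rho_M$ is proper and $\widetilde{X} = \rho_M^{-1}(X)$, the base-change identity $R\Gamma_X \circ R\rho_{M*} = R\rho_{M*} \circ R\Gamma_{\widetilde{X}}$ combined with the Poincar\'e duality isomorphism \eqref{b6} applied to both $M$ and $\widetilde{M}$ gives
\[ H_i(\widetilde{X}) \;\cong\; H^{2m-i}_X(M,\CC_M) \;\oplus\; H^{2m-i}_X(M,\mathcal{F}) \;\cong\; H_i(X) \;\oplus\; H^{2m-i}_X(M,\mathcal{F}). \]
This furnishes an injection $\widetilde{\delta}: H_i(X) \hookrightarrow H_i(\widetilde{X})$ analogous to \eqref{b22}. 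Over the open set $U$ the complex $\mathcal{F}$ vanishes, so the splitting becomes trivial there, and consequently $\widetilde{\delta}$ restricts to the identity map on $H_i(X \cap U) = H_i(\rho_M^{-1}(X \cap U))$.

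Next, set $\zeta = \widetilde{\delta}(\xi) \in H_i(\widetilde{X})$; the restriction identity $\xi|_{X \cap U} = \zeta|_{\rho^{-1}(X \cap U)}$ is immediate from the preceding paragraph. For the sum identity, consider the class $\xi - \rho_*\zeta \in H_i(X)$. Because $\rho_M$ is an isomorphism over $U$, proper pushforward commutes with open restriction, and together with the identity behavior of $\widetilde{\delta}$ over $U$ this yields $(\xi - \rho_*\zeta)|_{X \cap U} = 0$. The long exact sequence \eqref{b4} for the open-closed pair $(X \cap U, Z)$ in $X$ then produces $\eta \in H_i(Z)$ with $\jmath_*\eta = \xi - \rho_*\zeta$, which completes the proof.

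The most delicate point is the second paragraph: one must verify that the direct-sum decomposition from \cite{BBD} descends through $R\Gamma_X$ to a splitting of $H_i(\widetilde{X})$ exhibiting $H_i(X)$ as the "distinguished" summand in a fashion compatible with restriction to $U$. Both requirements reduce to the vanishing $\mathcal{F}|_U = 0$, which is built into the support statement of the decomposition theorem and is exactly what makes the parallel with Lemma \ref{b29} go through.
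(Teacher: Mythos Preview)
Your proof is correct, but it takes a heavier route than the paper does. The paper avoids the decomposition theorem entirely: since $M$ and $\widetilde{M}$ are smooth and $\rho_M(\widetilde{M}-\widetilde{X})\subset M-X$, one has the elementary cohomological pullback
\[
H_i(X)\;\cong\; H^{2m-i}(M,M-X)\;\xrightarrow{\rho_M^*}\;H^{2m-i}(\widetilde{M},\widetilde{M}-\widetilde{X})\;\cong\;H_i(\widetilde{X})
\]
via \eqref{b6}, and $\zeta$ is defined to be the image of $\xi$ under this map. This pullback is canonically the identity over $U$, and the rest of the argument (restriction to $U$, long exact sequence \eqref{b4}) proceeds exactly as you wrote.

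Your map $\widetilde{\delta}$ is in fact this same pullback: both are induced by the adjunction unit $\CC_M\to R\rho_{M*}\CC_{\widetilde{M}}$. The decomposition theorem tells you additionally that this map is split injective, but neither the splitting nor the injectivity is used in the proof---you only need the existence of $\zeta$ and its agreement with $\xi$ over $U$. The whole point of Lemma~\ref{f19} in the paper is that smoothness of the ambient spaces lets one bypass \cite{BBD}, in contrast with Lemma~\ref{b29} where $Y$ may be singular and the decomposition theorem is genuinely required. Your argument works, but it obscures this simplification.
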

\begin{proof}
As $\rho_M$ sends $\widetilde{M}-\widetilde{X}$ to $M-X$, we have a homomorphism
$$H_i(X)\cong H^{\dim_\RR M-i}(M,M-X)\mapright{\rho_M^*} H^{\dim_\RR M-i}(\widetilde{M}, \widetilde{M}-\widetilde{X})\cong H_i(\widetilde{X})$$
by \eqref{b6}. Let $\zeta\in H_i(\widetilde{X})$ be the image of $\xi$ by this map. 
As the map is canonical and $\rho$ is an isomorphism over $X\cap U$, we have $\xi|_{X\cap U}=\zeta|_{\rho^{-1}(X\cap U)}$ 
after identifying $\rho^{-1}(X\cap U)$ with $X\cap U$ by $\rho$. Hence $\xi-\rho_*\zeta|_{X\cap U}$ is zero and thus lies in the
image of $\jmath_*$ by \eqref{b4}. 
\end{proof}

\subsection{Gysin maps}\label{S2.3}
If $f:X\to Y$ is a continuous map of oriented manifolds, then the pullback homomorphism $f^*:H^i(Y)\to H^i(X)$ together with the Poincar\'e duality \eqref{b7} induce the Gysin map 
\beq\label{b8} f^!:H_i(Y)\cong H^{\dim_\RR M-i}(Y)\mapright{\imath^*} H^{\dim_\RR M-i}(X)\cong H_{i+l}(X)\eeq
where $l=\dim_\RR X-\dim_\RR Y$.
More generally, let $g:W\to Y$ be a continuous map and $Z=X\times_Y W$ so that we have a commutative fiber diagram
\[\xymatrix{
Z\ar[r]\ar[d] & X\ar[d]^f\\
W\ar[r]^g & Y.
}\]
Fixing a topological closed embedding $\imath:W\hookrightarrow \RR^n$, we have a fiber diagram 
\[\xymatrix{
Z\ar@{^(->}[r]\ar[d] & X\times \RR^n\ar[d]\\
W\ar@{^(->}[r] & Y\times \RR^n.
}\]
where the bottom arrow is $(g,\imath)$ and the right vertical arrow is $\hat{f}:=(f,1_{\RR^n})$.
Then we have the Gysin map
\beq\label{b16} f^!:H_i(W)\cong H^{\dim_\RR Y+n-i}(Y\times\RR^n,Y\times \RR^n-W)\mapright{\hat{f}^*} \eeq
\[\lra H^{\dim_\RR Y+n-i}(X\times\RR^n,X\times\RR^n-Z)\cong H_{i+l}(Z).\]
For a proper map $u:W'\to W$ and $Z'=Z\times_WW'$ so that we have a fiber diagram
\[\xymatrix{
Z'\ar[r]^v \ar[d] &Z\ar[r]\ar[d] & X\ar[d]^f\\
W'\ar[r]^u &W\ar[r]^g & Y,
}\]
we have 
\beq\label{c30}
f^!\circ u_*=v_*\circ f^!:H_i(W')\lra H_{i+l}(Z).
\eeq
One way to prove \eqref{c30} is to use the definition of the pushforward in \cite[Appendix B]{FulY} and the commutativity of cohomology pullbacks.

\subsection{Euler classes}\label{S2.4}
If $\pi:E\to Y$ is a complex vector bundle of rank $r$ over a topological space $Y$, then
the canonical orientation on the fibers by the complex structure gives us the Thom class
$$\tau_{Y/E}\in H^{2r}(E,E-Y)=H^{2r}_Y(E)$$
where $Y$ is embedded into $E$ by the zero section $0:Y\to E$. The cap product with $\tau_{Y/E}$ gives us the Gysin isomorphism
\beq\label{b12} 0^!_E=(\cdot)\cap\tau_{Y/E} : H_i(E)\lra H_{i-2r}(Y).\eeq
The image of the Thom class $\tau_{Y/E}$ by the pullback $0^*:H^{2r}(E,E-Y)\to H^{2r}(Y)$ is the Euler class $e(E)\in H^{2r}(Y)$. 
More generally, if  $s:Y\to E$ is an arbitrary section whose zero locus is $X=s^{-1}(0)$, then the image of $\tau_{Y/E}$ by the pullback $s^*:H^{2r}(E,E-Y)\to H^{2r}(Y,Y-X)$ is the restricted Euler class 
\beq\label{b13} e(E,s)=s^*(\tau_{Y/E})\in H^{2r}(Y,Y-X)=H^{2r}_X(Y).\eeq
If $0\to E'\to E\to E''\to 0$ is an exact sequence of complex vector bundles and $s$ is a section of $E'$ with $X=s^{-1}(0)$, then we have the Whitney sum formula
\beq\label{b40} e(E,s)=e(E',s)\cup e(E'')|_X.\eeq

The cap product with $e(E,s)$ gives us the \emph{Gysin map} or the \emph{operational Euler class} 
\beq\label{b14} s^!= e^{op}(E,s)=(\cdot)\cap e(E,s):H_i(Y)\lra H_{i-2r}(X).\eeq

Let $E$ be a complex vector bundle over $Y$ of rank $r$ with a section $s$ and let $\rho:\tY\to Y$ be a proper map. 
Let $X=s^{-1}(0)$ and $\widetilde{X}=\tilde{s}^{-1}(0)$ where $\tilde{s}$ is the pullback of $s$ to $\widetilde{E}=\rho^*E$. 
Then from the definition, we have 
$${\rho}^*e(E,s)=\rho^*s^*\tau_{Y/E}=\tilde{s}^*\tau_{\tY/\tE}=e(\widetilde{E},\tilde{s})\in H^{2r}(\tY,\tY-\widetilde{X}).$$
By the projection formula \cite[IX.3.7]{Iver}, we have 
\beq\label{b38} \rho'_*\circ\tilde{s}^!(\xi)=\rho'_*(\xi\cap e(\widetilde{E},\tilde{s}))=\rho_*(\xi)\cap e(E,s)=s^!\circ\rho_*(\xi),\quad \forall \xi\in H_i(\tY)\eeq
where $\rho':\widetilde{X}\to X$ is the restriction of $\rho$. 
\begin{lemm}\label{c3}
Let $u:M'\to M$ be a continuous map of smooth oriented manifolds.  Let $E_M$ be a complex vector bundle on $M$ with a section $s_M$.  For a topological space $Y$ and a continuous map $Y\to M$, consider the fiber diagram
\[\xymatrix{
Y'\ar[r]\ar[d]_v & M'\ar[d]^u\\
Y\ar[r] & M.
}\]
Let  $(E,s)$ (resp. $(E',s')$) denote the pullback of $(E_M,s_M)$ to $Y$ (resp. $Y'$) and $X=\zero(s)$, $X'=\zero(s')=X\times_Y Y'$. 
Then we have 
\beq\label{c4}
(u^!\circ s^!)(\xi)=u^!(\xi\cap e(E,s))=u^!(\xi)\cap e(E', s')=({s'}^!\circ u^!)(\xi)\in H_{i+l}(X')\eeq
for $\xi\in  H_i(Y)$ where $l=\dim_\RR M'-\dim_\RR M$.
\end{lemm}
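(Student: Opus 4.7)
The plan is to reduce \eqref{c4} to two standard ingredients: naturality of the restricted Euler class under pullback, and naturality of the topological Gysin map $u^!$ with respect to cap products.

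\textbf{Step 1 (naturality of $e(E,s)$).} Let $v: Y' \to Y$ denote the base map of the fiber square. Since $(E', s')$ is the pullback of $(E, s)$ along $v$, the induced bundle map $\tilde v: E' \to E$ identifies the Thom classes: $\tilde v^* \tau_{Y/E} = \tau_{Y'/E'}$, by naturality of Thom classes of complex vector bundles under pullback. Substituting into \eqref{b13},
$$v^* e(E, s) = v^*(s^* \tau_{Y/E}) = (s')^* \tilde v^* \tau_{Y/E} = (s')^* \tau_{Y'/E'} = e(E', s')$$
in $H^{2r}_{X'}(Y')$.

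\textbf{Step 2 (cap product is natural under $u^!$).} I claim that for any closed $A \subset Y$, any $\xi \in H_i(Y)$, and any $\beta \in H^p_A(Y)$,
$$u^!(\xi \cap \beta) = u^!(\xi) \cap v^*\beta \quad \text{in } H_{i+l-p}(A'),$$
where $A' = v^{-1}(A)$. To see this, unpack the definition \eqref{b16}: fix a topological closed embedding $Y \hookrightarrow \RR^n$ and form the closed embedding $Y \hookrightarrow M \times \RR^n$; with $N = \dim_\RR M + n$, the Gysin map $u^!$ is realized as the pullback $\hat u^*: H^{N-i}_A(M \times \RR^n) \to H^{N-i}_{A'}(M' \times \RR^n)$ composed with the Poincar\'e--Lefschetz duality isomorphisms on either side. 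Under these isomorphisms, the cap product $\xi \cap \beta$ on $Y$ corresponds, by the cap--cup associativity \eqref{c33}, to a cup product of the ambient cohomology representatives in $H^*(M \times \RR^n, M \times \RR^n - \, \cdot\,)$. Since cup product commutes with $\hat u^*$ by naturality, and the ambient representative of $\beta$ restricts on $Y'$ to $v^*\beta$, the claimed identity follows.

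\textbf{Step 3 (combination).} Taking $\beta = e(E, s)$ and $A = X$ in Step 2 and then applying Step 1,
$$u^!(\xi \cap e(E, s)) = u^!(\xi) \cap v^* e(E, s) = u^!(\xi) \cap e(E', s') = (s')^!(u^!(\xi)),$$
which is exactly \eqref{c4}.

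\textbf{Main obstacle.} The substantive step is Step 2, i.e.\ the naturality of cap product under the topologically defined Gysin map $u^!$. Morally this is just formula \eqref{c33}, but making it precise requires comparing cap products on $Y$ with cup products in the ambient relative cohomology of $M \times \RR^n$ supported in the closed set $A$, and checking that this comparison is compatible with $\hat u^*$. Once this dictionary is set up, the argument is a routine application of the naturality of cup product and Poincar\'e--Lefschetz duality for the closed embeddings $A \subset M \times \RR^n$ and $A' \subset M' \times \RR^n$.
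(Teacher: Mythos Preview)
Your proof is correct and follows essentially the same route as the paper's: embed $Y\hookrightarrow M\times\RR^n$, convert the cap product $\xi\cap e(E,s)$ into a cup product $\alpha\cup e(\hat E,\hat s)$ on the ambient manifold via \eqref{c33}, pull back by $\hat u^*$, and use $v^*e(E,s)=e(E',s')$. One small caveat: your Step~2 is stated for arbitrary $\beta\in H^p_A(Y)$, but the argument (here and in the paper) really uses that $\beta$ extends to a class on $M\times\RR^n$---which holds for $\beta=e(E,s)$ precisely because $(E,s)$ is pulled back from $(E_M,s_M)$ on $M$; you might make this hypothesis explicit in the statement of Step~2.
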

\begin{proof} Fix a topological closed embedding $\imath:Y\hookrightarrow \RR^n$.
By \eqref{b16},  there is a unique $\alpha\in H^{\dim_\RR M+n-i}(M\times \RR^n,M\times \RR^n-Y)$ satisfying 
$$[M\times\RR^n]\cap \alpha=\xi \and u^!(\xi)=[M'\times \RR^n]\cap {\hat{u}}^*(\alpha)$$
where $\hat{u}=(u,1_{\RR^n}):M'\times\RR^n\to M\times\RR^n$.  Let $(\hat{E},\hat{s})$ denote the pullback of $(E_M,s_M)$ to $M\times \RR^n$. 
Since 
$$\xi\cap e(E,s)=([M\times \RR^n]\cap \alpha)\cap e(E,s)=[M\times \RR^n]\cap (\alpha\cup e(\hat{E},\hat{s})),$$ we have 
$$u^! (\xi\cap e(E,s))=[M'\times \RR^n]\cap {\hat{u}}^*(\alpha\cup e(\hat{E},\hat{s}))=([M'\times \RR^n]\cap {\hat{u}}^*(\alpha))\cap v^*e(E,s).$$
As $e(E,s)$ is defined by pulling back the Thom class, $v^*e(E,s)=e(E', s')$. This proves the lemma.
\end{proof}

Since the cap product is compatible with the open restriction, the pushforward by \eqref{b38} and the boundary operator in \eqref{b4}, for a closed subset $Z$ and its complement $U=Y-Z$, we have the commutative diagram (cf. \cite[IX.3.6]{Iver})\small
\beq\label{b41} \xymatrix{
\cdots\ar[r] & H_i(Z)\ar[r]^{\jmath_*}\ar[d] & H_i(Y)\ar[r]^{\imath^*}\ar[d] & H_i(U)\ar[r]^\partial\ar[d] & H_{i-1}(Z)\ar[d] \\ 
\cdots\ar[r] & H_{i-2r}(Z\cap X)\ar[r]^{\jmath_*} & H_{i-2r}(X)\ar[r]^{\imath^*} & H_{i-2r}(U\cap X)\ar[r]^\partial & H_{i-1-2r}(Z\cap X)
}\eeq \normalsize
where the vertical arrows are the cap products with the Euler classes of $(E,s)$ and its restrictions to $U$ and $Z$. 

Let $E$ and $F$ be complex vector bundles on a topological space $Y$ with sections $s$ and $t$ respectively. 
Then for $\xi\in H_i(Y)$, we have
\beq\label{b80}
\xi\cap e(E,s)\cap e(F|_{s^{-1}(0)},t|_{s^{-1}(0)})=\xi\cap e(E\oplus F,s\oplus t)\eeq
\[=\xi\cap e(F,t)\cap e(E|_{t^{-1}(0)},s|_{t^{-1}(0)})\]
by \cite[IX.3.4]{Iver}.

When $Y$ is an oriented manifold and the image of $s$ in $E$ is transversal to the zero section $0(Y)$ so that $X=s^{-1}(0)$ is an oriented submanifold of real codimension $2r$ with the inclusion map $u:X\to Y$, we have the equality \beq\label{c28} u^!=s^!=e^{op}(E,s):H_i(Y)\lra H_{i-2r}(X)\eeq 
where $u^!$ is the Gysin map defined in \eqref{b8} because the normal bundle to $X$ in $Y$ is isomorphic to $E|_X$ and $[X]=[Y]\cap e(E,s)$ by \cite[IX.4.8]{Iver}. 

By the same argument, if $X=s^{-1}(0)$ is the zero locus of a section $s$ of an algebraic vector bundle $E$ of rank $r$ over a (not necessarily smooth) irreducible algebraic variety $Y$ and $c$ is a geometric chain in $Y$ representing a class $[c]\in H_i(Y)$, which is dimensionally transversal to each stratum of $X$ for a stratification of $X$ which makes the inclusion $u:X\to Y$ a stratified map, then $u^{-1}(c)=c\cap X$ represents the class $s^![c]=[c]\cap e(E,s)$ in $H_{i-2r}(X)$ when $2r=\dim_\RR Y-\dim_\RR X$. By the proof of Proposition \ref{b71} above, if the inclusion $u$ is furthermore placid, we have 
\beq\label{c29} u^*=s^!=e^{op}(E,s):H_i(Y)\lra H_{i-2r}(X).\eeq 
Combining \eqref{c29} with the first diagram in \eqref{c2}, we have 
a commutative diagram
\beq\label{b73}\xymatrix{
\ih_i(Y)\ar[rr]^{u^*} \ar[d]_{\epsilon_Y} && \ih_{i-2r}(X)\ar[d]^{\epsilon_X}\\
H_i(Y)\ar[rr]^{s^!=e^{op}(E,s)} && H_{i-2r}(X).
}\eeq


\begin{rema}\label{b15}
When $Y$ is a smooth algebraic variety and $s\in H^0(Y;E)$ is a regular section of an algebraic vector bundle $E$ over $Y$ of rank $r$, letting $X=s^{-1}(0)$ denote the closed subscheme defined by the image $I$ of $s^\vee:E^\vee\to \sO_Y$, we have a perfect obstruction theory 
$$[E^\vee|_X\mapright{d\circ s^\vee} \Omega_Y|_X]$$ of $X$ where $d:I/I^2\to \Omega_Y|_X$ is the differential. Then we have a closed embedding 
$$C_{X/Y}=\mathrm{Spec}_X(\bigoplus_n I^n/I^{n+1})\subset E|_X$$ by the surjection 
$\mathrm{Sym}(s^\vee):\mathrm{Sym}(E^\vee|_X)\to \bigoplus_n I^n/I^{n+1}$ and the virtual fundamental class $$[X]\virt=0^!_{E|_X}[C_{X/Y}]\in A_*(X).$$ By \cite[Chapter 19]{Fulton}, the image of the virtual fundamental class $[X]\virt$ by the cycle class map $h_X:A_*(X)\to H_*(X)$ coincides with $s^![Y]=[Y]\cap e(E,s)$ where $[Y]\in H_{\dim_\RR Y}(Y)$ is the orientation class (fundamental class) of $Y$.  
\end{rema}

\subsection{Borel-Moore homology of a quasi-homogeneous cone}\label{S2.5}

Let $B=\CC^m$ ($m\ge 2$) be an affine space equipped with a quasi-homogeneous polynomial $w$ whose zero locus $Z=w^{-1}(0)$ has isolated singularity at the origin. Let $\PP Z=Z-\{0\}/\CC^*$. Then we have
\beq\label{p1} H_m(Z)\cong H^{m-2}_{\mathrm{prim}}(\PP Z).\eeq
In fact, if $\bar{\xi}$ is a geometric chain representing a class $[\bar{\xi}]$ in $H^{m-2}_{\mathrm{prim}}(\PP Z)$, the closure in $Z$ of the inverse image of $\bar{\xi}$ by the quotient map $Z-\{0\}\to Z-\{0\}/\CC^*=\PP Z$ defines a cycle $\xi$ representing the class $[\xi]\in H_m(Z)$ via the isomorphism \eqref{p1}. By \cite[\S2.3]{Voisin}, there are real analytic subsets $\xi_1,\cdots,\xi_{b_m}$ of $Z$ which represent a basis $\{[\xi_1],\cdots, [\xi_{b_m}]\}$ of $H_m(Z)$ where $b_m=\dim H_m(Z).$

The middle perversity intersection homology of $Z$ is (cf. \cite[p.20]{Borel})
\beq\label{d11}\ih_i(Z)=\left\{ \begin{matrix} H^{m-2}_{\mathrm{prim}}(\PP Z) & i=m\\
\CC & i=2m-2 \\
0 & i\ne m, 2m-2 \end{matrix}  \right.\eeq
so that \eqref{b21} becomes an isomorphism
\beq\label{b24} \ih_m(Z)\cong H_m(Z). \eeq
The middle homology groups \eqref{b24} 
will be the (broad) state space of our cohomological field theory in \S\ref{S4}. 

When $w_1$ and $w_2$ are quasi-homogeneous polynomials on $B_1=\CC^{m_1}$ and $B_2=\CC^{m_2}$ respectively, we have a quasi-homogeneous polynomial 
$$w=w_1\boxplus w_2:B=B_1\times B_2\lra \CC, \quad w(x,y)=w_1(x)+w_2(y)$$ and the inclusion map $\imath:w_1^{-1}(0)\times w_2^{-1}(0)\to w^{-1}(0)$. 
The cross product (cf. \cite[IX.5.7]{Iver}) together with the pushforward $\imath_*$ give us a homomorphism 
\beq\label{b17} H_i(w_1^{-1}(0))\otimes H_j(w_2^{-1}(0))\lra H_{i+j}(w_1^{-1}(0)\times w_2^{-1}(0))\mapright{\imath_*} H_{i+j}(w^{-1}(0))\eeq
which we will sometimes call the \emph{Thom-Sebastiani map}.

\subsection{Borel-Moore homology of \DM stacks}\label{S2.6}

All of the above definitions and properties from \S\ref{S2.1} to \S\ref{S2.4} extend to  \DM stacks, whose coarse moduli spaces are locally compact Hausdorff countable CW complexes.
Indeed, all results are obtained by sheaf theory and local analysis. The pullback of a geometric chain by an \'etale map is a geometric chain. Hence the sheaf complex of geometric chains is well defined on a \DM stack and all the definitions and results hold for \DM stacks. For instance, the decomposition theorem \eqref{b25} proved for schemes in \cite{BBD} comes from the degeneration of a spectral sequence of sheaf complexes using perverse filtration. Since \'etale maps preserve the perverse filtration, the sheaf theoretic statement \eqref{b25} holds for \DM stacks as well.

\bigskip

\def\fZ{{\mathfrak{Z}}}
\def\tfZ{{\widetilde{\fZ}}}

\section{Cosection localized Gysin maps}\label{S3}

In this section, we generalize the Gysin map \eqref{b14} to the case when $E$ admits a cosection. 
Our construction parallels those of the cosection localized Gysin maps in \cite[\S2]{KLc} and \cite{KLk}.

\subsection{A localized Gysin map}\label{S3.1}

The following is the common set-up in this section.
\begin{assu}\label{c5}
Let $Y$ be a normal \DM stack over $\CC$ and $s$ be a section of an algebraic vector bundle $E$ of rank $r$ over $Y$ whose zero locus is denoted by $X=s^{-1}(0)$. Let $\sigma:E\to \sO_Y$ be a homomorphism of coherent sheaves on $Y$ such that \beq\label{b77} \sigma\circ s=0.\eeq
Let $S=X\times_Y \fZ$
where $\fZ=\sigma^{-1}(0)=\zero(\sigma)$ is the degeneracy locus where $\sigma$ is zero (i.e. not surjective). 
\end{assu}

%
%
\begin{theo}\label{b18}
Under Assumption \ref{c5}, we have a homomorphism 
\beq\label{b19} s^!_\sigma= e^{op}_{\sigma}(E,s):\ih_i(Y) \lra H_{i-2r}(S)\eeq
whose composition with $\imath_*:H_{i-2r}(S)\to H_{i-2r}(X)$ equals 
$s^!\circ \epsilon_Y $ where $\imath:S\to X$ denotes the inclusion map.
\end{theo}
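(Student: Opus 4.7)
The plan is to adapt the algebraic cosection-localized Gysin map of \cite[\S2]{KLc} to the topological setting, using Lemma \ref{b29} to transfer a class $\xi \in \ih_i(Y)$ onto a resolution of the cosection on which an ordinary Gysin computation produces a class supported on $S$.

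Step 1 (resolve the cosection). Choose a projective birational morphism $\rho : \tY \to Y$ --- a finite sequence of blowups of (strict transforms of) $\fZ$ --- so that the pullback $\tsig := \rho^*\sigma : \tE := \rho^*E \to \sO_{\tY}$ has invertible image $\sO_{\tY}(-D)$ for some effective Cartier divisor $D$ supported on $\rho^{-1}(\fZ)$. Then $\tsig$ factors as $\tE \twoheadrightarrow \sO_{\tY}(-D) \stackrel{t}{\hookrightarrow} \sO_{\tY}$, where $t \in \Gamma(\sO_{\tY}(D))$ cuts out $D$. The kernel $\tF := \ker\tsig \subset \tE$ is a rank-$(r-1)$ subbundle, and since $\tsig \circ \tilde s = 0$ for $\tilde s := \rho^*s$, the section $\tilde s$ factors through $\tF$. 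By construction $\rho$ is an isomorphism over $U := Y - \fZ$.

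Step 2 (construct $s^!_\sigma$). Given $\xi \in \ih_i(Y)$, Lemma \ref{b29} produces $\zeta \in H_i(\tY)$ and $\eta \in H_i(\fZ)$ with $\epsilon_Y(\xi) = \rho_*\zeta + \jmath_*\eta$, where $\jmath : \fZ \hookrightarrow Y$ is the inclusion. Put $\tX := \tilde s^{-1}(0)$. Then $e(\tF, \tilde s) \in H^{2r-2}_{\tX}(\tY)$ and $e(\sO_{\tY}(D), t) \in H^{2}_{D}(\tY)$ cup to a class in $H^{2r}_{\tX \cap D}(\tY)$, and I set
\[ A(\zeta) \defeq -\,\zeta \cap \bl e(\tF, \tilde s) \cup e(\sO_{\tY}(D), t) \br \in H_{i-2r}(\tX \cap D). \]
Since $\tX \cap D \subseteq \rho^{-1}(S)$, the proper map $\rho$ restricts to a proper map $\tX \cap D \to S$. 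On the other side, let $B(\eta) \defeq \eta \cap e(E|_{\fZ}, s|_{\fZ}) \in H_{i-2r}(S)$ be the ordinary Gysin image of $\eta$ on $\fZ$ (note $s|_\fZ^{-1}(0) = S$). Define
\[ s^!_\sigma(\xi) \defeq \rho_* A(\zeta) + B(\eta) \in H_{i-2r}(S). \]

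Step 3 (compatibility). Applying the Whitney formula \eqref{b40} to $0 \to \tF \to \tE \to \sO_{\tY}(-D) \to 0$ yields $e(\tE, \tilde s) = e(\tF, \tilde s) \cup c_1(\sO_{\tY}(-D))|_{\tX}$, and $c_1(\sO_{\tY}(-D)) \in H^2(\tY)$ is the image of $-e(\sO_{\tY}(D), t) \in H^2_D(\tY)$ under the forgetful map $H^2_D \to H^2$. Hence the forgetful image of $A(\zeta)$ in $H_{i-2r}(\tX)$ equals $\zeta \cap e(\tE, \tilde s) = \tilde s^!(\zeta)$. The projection formula \eqref{b38} then gives $\imath_* \rho_* A(\zeta) = \rho_* \tilde s^!(\zeta) = s^!\rho_*\zeta$, and the analogous identity applied to $\jmath$ gives $\imath_* B(\eta) = s^!\jmath_*\eta$. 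Summing yields $\imath_* s^!_\sigma(\xi) = s^!(\rho_*\zeta + \jmath_*\eta) = s^!\epsilon_Y(\xi)$, as required.

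The hardest step will be well-definedness: that $s^!_\sigma(\xi)$ depends only on $\xi$, not on the lift $(\zeta, \eta)$ nor the resolution $\rho$. Two different lifts differ by a class in the image of the boundary $H_{i+1}(U) \to H_i(\fZ)$ from the sequence \eqref{b4} applied to $(\tY, \rho^{-1}\fZ, \rho^{-1}U)$, and the induced changes in $\rho_*A(\zeta)$ and $B(\eta)$ must be shown to cancel via the naturality diagram \eqref{b41}. Independence of $\rho$ reduces, by dominating two resolutions by a common third, to the projection formula \eqref{b38} together with the insensitivity of $A$ to further blowups along $D$. Finally, passage from schemes to Deligne-Mumford stacks is routine by the principles recorded in \S\ref{S2.6}.
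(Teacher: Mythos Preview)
Your construction in Steps~1--3 is exactly the paper's: the same $\sigma$-regularizing blowup, the same lift via Lemma~\ref{b29}, the same formula for $s^!_\sigma(\xi)$ (your cup-then-cap presentation agrees with the paper's iterated caps by \eqref{c33}), and the same compatibility argument via Whitney and the projection formula.

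The well-definedness sketch, however, misdescribes the ambiguity and omits the two key points that make the argument go through. The freedom in a lift $(\zeta,\eta)$ is \emph{not} ``a class in the image of the boundary $H_{i+1}(U)\to H_i(\fZ)$''. There are two separate variations: (a) fixing $\zeta$, the class $\jmath_*\eta$ is determined, so $\eta$ is ambiguous by $\ker(\jmath_*)=\mathrm{im}\,\partial$; (b) varying $\zeta$ subject to the constraint $\zeta|_{\rho^{-1}U}=\epsilon_Y(\xi)|_U$ from Lemma~\ref{b29}, the difference $\zeta-\hat\zeta$ lies in $\tilde\jmath_*H_i(\rho^{-1}\fZ)$, \emph{not} in the image of $\partial$, and one compensates by $\hat\eta=\eta+{\rho_\fZ}_*\lambda$. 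For (a), the diagram \eqref{b41} alone is not enough: the crucial input is that over $U=Y-\fZ$ the cosection $\sigma$ is surjective, so $E|_U$ has a trivial rank-one quotient and hence $e(E|_U,s|_U)=0$ by \eqref{b40}; this kills $\partial(\alpha)\cap e(E|_\fZ,s|_\fZ)$. For (b), one needs an explicit calculation (the paper's \eqref{b46}) showing that the change in $\rho_*A(\zeta)$ equals the change in $B(\eta)$, using that $t|_{D}=0$ so $e(\sO(D),t)$ restricted to $D$ becomes the ordinary Euler class, which one then flips in sign to match $e(\sO(-D))$ and reassemble $e(\tE,\tilde s)$ via Whitney. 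Neither of these ingredients follows from \eqref{b41} alone; you should state them explicitly.
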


\begin{proof}
Let $I\subset \sO_Y$ denote the image of $\sigma$ that defines ${\mathfrak{Z}}=\sigma^{-1}(0)$. Let $$\rho:\tY\lra Y$$ be the normalization of the blowup of $Y$ along the ideal $I$ so that the pullback of $\sigma$ to $\tY$ is surjective
$$\tsig: \tE=\rho^*E\twoheadrightarrow \sO_{\tY}(-\tfZ)$$
for an effective Cartier divisor $\tfZ$ of $\tY$ lying over $\fZ=\zero(\sigma)$. Such a morphism $\rho$ is called \emph{$\sigma$-regularizing} and we will show below that our construction is independent of the choice of a $\sigma$-regularizing morphism.

By Lemma \ref{b29}, for $\xi\in \ih_i (Y)$, we have $\zeta\in H_i(\tY)$ and $\eta\in H_i(\fZ)$ such that 
\beq\label{b30} \epsilon_Y(\xi)=\rho_*(\zeta) + \jmath_*(\eta) \in H_i(Y)\eeq
where $\jmath:\fZ\to Y$ is the inclusion map. 
 
For $\eta$, we have 
\beq\label{b31} \eta\cap e(E|_\fZ,s|_\fZ)\in H_{i-2r}(S)\eeq 
by the cap product with the Euler class $e(E|_\fZ,s|_\fZ)\in H^{2r}(\fZ,\fZ-S)$. 

For $\zeta$, let $F=\mathrm{ker}(\tsig)$ so that we have a short exact sequence
\beq\label{b47} 0\lra F\lra \tE\mapright{\tsig} \sO_{\tY}(-\tfZ)\lra 0.\eeq 
Since $\sigma\circ s=0$, the pullback $\tilde{s}\in H^0(\tY;\widetilde{E})$ of the section $s\in H^0(Y;E)$ is in fact a section of $F$. Hence we have
\beq\label{b32} \zeta\cap e(F,\tilde{s})\in H_{i-2r+2}(\tX)\eeq
where $\widetilde{X}=X\times_Y\tY=\zero(\tilde{s})$. 
Moreover, if we restrict the canonical section $t$ of $\sO_{\tY}(\tfZ)$ defining the effective Cartier divisor $\tfZ$ to $\widetilde{X}$, we have 
\beq\label{b33}
\zeta\cap e(F,\tilde{s})\cap e(\sO_{\widetilde{X}}(\tS), t_{\tS})\in H_{i-2r}(\tS) \eeq 
where $\tS=\tfZ\times_{\tY}\widetilde{X}$ and $t_{\tS}$ is the restriction of $t$ to $\tX$. 
Let $\rho_{S}:\tS\to S$ denote the restriction of $\rho$ to $\tS$. Then $\rho_{\tS}$ is proper as $\rho$ is proper. Applying $-{\rho_{S}}_*$ to \eqref{b33}, we obtain
\beq\label{b34} -{\rho_{S}}_*\left(\zeta\cap e(F,\tilde{s})\cap e(\sO_{\widetilde{X}}(\tS), t_{\tS})\right)\in H_{i-2r}(S).\eeq

By adding \eqref{b31} and \eqref{b34}, we define 
\beq\label{b35} s^!_\sigma(\xi)=e^{op}_{\sigma}(E,s) (\xi)=
-{\rho_{S}}_*\left(\zeta\cap e(F,\tilde{s})\cap e(\sO_{\widetilde{X}}(\tS), t_{\tS})\right) + \eta\cap e(E|_\fZ,s|_\fZ).\eeq

\def\hdel{\hat{\delta}}
\def\hze{\hat{\zeta}}
\def\het{\hat{\eta}}
\def\tjm{\tilde{\jmath}}

To show that \eqref{b35} is well defined, we check that $s^!_\sigma(\xi)=e^{op}_{\sigma}(E,s) (\xi)$ is independent of the choice of $\zeta$ and $\eta$ as long as they satisfy \eqref{b30} and 
\beq\label{b76}\zeta|_{\tY-\tfZ}=\epsilon_Y(\xi)|_{Y-\fZ}\in H_i(Y-\fZ)\eeq via the identification $\tY-\tfZ=Y-\fZ$.\footnote{\eqref{b76} is guaranteed by Lemma \ref{b29}.} 
First fix $\zeta$ and pick another $\het\in H_i(\fZ)$ such that $\jmath_*\eta=\jmath_*\het$. Then \eqref{b41} gives us the commutative diagram 
\[\xymatrix{
H_{i+1}(Y-\fZ)\ar[r]^\partial\ar[d]_{e(E|_{Y-\fZ},s|_{Y-\fZ})} & H_i(\fZ)\ar[r]^{\jmath_*}\ar[d]^{e(E|_\fZ,s|_\fZ)} & H_i(Y)\ar[d]^{e(E,s)}\\
H_{i+1-2r}(X-S)\ar[r]^\partial & H_{i-2r}(S)\ar[r]^{\jmath_*} & H_{i-2r}(X).
}\]
Since $\sigma\circ s=0$, $s|_{Y-\fZ}$ is a section of the kernel $\hat{E}$ of the surjection $\sigma:E|_{Y-\fZ}\to \sO_{Y-\fZ}$ and hence the first vertical arrow is $$e(E|_{Y-\fZ},s|_{Y-\fZ})=e(\hat{E},s|_{Y-\fZ})\cup e(\sO_{X-S})=0$$
by \eqref{b40} 
since the Euler class of the trivial bundle is zero. Since $\jmath_*(\eta-\het)=0$, $\eta-\het=\partial (\alpha)$ for some $\alpha\in H_{i+1}(Y-\fZ)$. Therefore, we have
\beq\label{b43} (\eta-\het)\cap e(E|_\fZ,s|_\fZ)=\partial (\alpha\cap e(E|_{Y-\fZ},s|_{Y-\fZ}))= 0.\eeq
Thus $e^{op}_{\sigma}(E,s) (\xi)$ is independent of the choice of $\eta$. 

Suppose we picked another $\hat{\zeta}$ satisfying 
$$\hat{\zeta}|_{\tY-\tfZ}=\epsilon_Y(\xi)|_{Y-\fZ}={\zeta}|_{\tY-\tfZ}.$$
By the above paragraph, we may choose any $\het$ satisfying 
\beq\label{b44} \jmath_*(\het)+\rho_*(\hze)=\epsilon_X(\xi)=\jmath_*(\eta)+\rho_*(\zeta).\eeq
Since $\zeta|_{\tY-\tfZ}=\hze|_{\tY-\tfZ}$, by \eqref{b4}, there exists a $\lambda\in H_i(\tfZ)$ such that 
$$\tilde{\jmath}_*\lambda=\zeta-\hze$$ 
where $\tilde{\jmath}:\tfZ\to \tY$ denotes the inclusion map. 
So we let 
\beq\label{b45} \het=\eta+{\rho_\fZ}_*(\lambda)\eeq
where $\rho_\fZ:\tfZ\to \fZ$ is the restriction of $\rho$ to $\fZ$. 
Then it is straightforward to see that \eqref{b44} is satisfied. 
By applying \eqref{b38} repeatedly and $t_{\tS}|_{\tS}=0$ as well as the exact sequence
$$0\lra F|_{\tX}\lra \tE|_{\tX}\lra \sO_{\tX}(-\tS)\lra 0$$
from \eqref{b47}, 
we have
\beq\label{b46}
{\rho_{S}}_*\left((\zeta-\hze)\cap e(F,\tilde{s})\cap e(\sO_{\widetilde{X}}(\tS), t_{\tS})\right)\eeq 
\[={\rho_{S}}_*\left(\tjm_*\lambda\cap e(F,\tilde{s})\cap e(\sO_{\widetilde{X}}(\tS), t_{\tS})\right)\] 
\[={\rho_{S}}_*\left(\tilde{\imath}_*(\lambda\cap \tjm^* e(F,\tilde{s}))\cap e(\sO_{\widetilde{X}}(\tS), t_{\tS})\right)\] 
\[={\rho_{S}}_*\left(\lambda\cap \tjm^* e(F,\tilde{s})\cap \tilde{\imath}^* e(\sO_{\widetilde{X}}(\tS))\right)\]
\[=-{\rho_{S}}_*\left(\lambda\cap \tjm^* e(F,\tilde{s})\cap \tilde{\imath}^* e(\sO_{\widetilde{X}}(-\tS))\right)\]
\[=-{\rho_{S}}_*\left(\lambda\cap \tjm^* e(\tE,\tilde{s})\right)
=-{\rho_{S}}_*\left(\lambda\cap \tjm^* \rho^*e(E,{s})\right)\]
\[=-{\rho_{S}}_*\left(\lambda\cap {\rho_\fZ}^* \jmath^*e(E,{s})\right)
=-{\rho_{\fZ}}_*\lambda\cap \jmath^*e(E,{s})\]
\[=(\eta-\het)\cap \jmath^*e(E,{s})=(\eta-\het)\cap e(E|_\fZ,{s}|_\fZ)\]
where $\tilde{\imath}:\tS\to \tX$ denotes the inclusion. By \eqref{b46}, $s^!_\sigma(\xi)$ defined by \eqref{b35} is independent of the choices of $\zeta$ and $\eta$. Moreover, by Lemma \ref{b50} below, $s^!_\sigma(\xi)$ defined by \eqref{b35} is independent of the choice of the resolution $\rho:\tY\to Y$ of the degeneracy of $\sigma$. Hence $s^!_\sigma=e^{op}_\sigma(E,s)$ is well defined, independent of all the choices.

Finally we prove that 
\beq\label{b36} \imath_*\circ s^!_\sigma=s^!\circ \epsilon_Y.\eeq
For $\xi, \zeta, \eta$ satisfying \eqref{b30}, applying the projection formula \eqref{b38} repeatedly as well as \eqref{b47} and \eqref{b35}, we have
$$s^!\circ \epsilon_Y (\xi)=\rho_*(\zeta)\cap e(E,s)+\jmath_*(\eta)\cap e(E,s)$$
$$={\rho_X}_*(\zeta\cap e(\tE,\tilde{s}))+\imath_*(\eta\cap e(E|_\fZ,s|_\fZ))$$
$$={\rho_X}_*(\zeta\cap e(F,\tilde{s})\cap e(\sO_{\tX}(-\tS)))+\imath_*(\eta\cap e(E|_\fZ,s|_\fZ))$$
$$=-{\rho_X}_*(\zeta\cap e(F,\tilde{s})\cap e(\sO_{\tX}(\tS)))+\imath_*(\eta\cap e(E|_\fZ,s|_\fZ))$$
$$=-{\rho_X}_*\tilde{\imath}_*(\zeta\cap e(F,\tilde{s})\cap e(\sO_{\tX}(\tS),t_{\tS}))+\imath_*(\eta\cap e(E|_\fZ,s|_\fZ))$$
$$=-{\imath}_*{\rho_S}_*(\zeta\cap e(F,\tilde{s})\cap e(\sO_{\tX}(\tS),t_{\tS}))+\imath_*(\eta\cap e(E|_\fZ,s|_\fZ))$$
$$={\imath}_*\left(-{\rho_S}_*(\zeta\cap e(F,\tilde{s})\cap e(\sO_{\tX}(\tS),t_{\tS}))+\eta\cap e(E|_\fZ,s|_\fZ)\right)$$
$$={\imath}_*\circ s^!_\sigma(\xi)$$
where $\rho_X:\tX\to X$ is the restriction of $\rho$ to $\tX$. This proves \eqref{b36}.
\end{proof}

\begin{rema}\label{e50}
In Assumption \ref{c5}, we required $Y$ to be normal.
If $Y$ is not normal, let $Y^{nor}\to Y$ denote the normalization of $Y$.
Since $\ih_*(Y^{nor})\cong \ih_*(Y)$ by \cite[4.2]{GM80}, we can replace $Y$ by $Y^{nor}$ and
pull back $E, s, \sigma$ etc to $Y^{nor}$.
Theorem \ref{b18} then gives us the cosection localized Gysin map
$$s^!_\sigma: \ih_i(Y)\cong \ih_i(Y^{nor})\lra H_{i-2r}(S^{nor})\lra H_{i-2r}(S)$$
where $S^{nor}=S\times_YY^{nor}$ and the last arrow is the proper pushforward.
Therefore the normality in Assumption \ref{c5} is only a matter of notational convenience. 
\end{rema}

\subsection{Remarks}\label{c6}
A key ingredient in the construction of the cosection localized Gysin map $s^!_\sigma=e^{op}_\sigma(E,s)$ above is the decomposition \eqref{b30}. For a class in $H_*(Y)$, there is no guarantee that we have a decomposition like \eqref{b30} with \eqref{b76}. For classes coming from the middle perversity intersection homology, the decomposition theorem in \cite{BBD} enables us to find such a decomposition (Lemma \ref{b29}) and that is why we use $\ih_*(Y)$ instead of $H_*(Y)$ as the starting point of the map $s^!_\sigma=e^{op}_\sigma(E,s)$.

In fact, the proofs of Theorem \ref{b18} and Lemma \ref{b50} tell us that whenever we can write a class $\bar\xi \in H_i(Y)$ as 
$$\bar\xi=\rho_*(\zeta)+\jmath_*(\eta)\in H_i(Y), \quad \bar\xi|_{Y-\fZ}=\zeta|_{\tY-\tfZ}$$
 for some resolution $\rho:\tY\to Y$ of the degeneracy of $\sigma$,
the cosection localized Gysin map \eqref{b35} is well defined, independent of choices. We call such a class $\bar\xi\in H_i(Y)$ \emph{$\sigma$-liftable}.

For example, classes in $H_i(Y)$ coming from intersection homology are $\sigma$-liftable by Lemma \ref{b29}. Moreover, classes coming from the Chow group $A_*(Y)$ are $\sigma$-liftable: If $\tilde{\xi}\in A_*(Y)$, $$\tilde{\xi}|_{Y-\fZ}\in A_*(Y-\fZ)=A_*(\tY-\tfZ)$$ extends to a class $\tilde{\zeta}\in A_*(\tY)$ since the restriction map $A_*(\tY)\to A_*(\tY-\tfZ)$ is surjective (cf. \cite[Proposition 1.8]{Fulton}). Then $\tilde{\xi}-\rho_*(\tilde{\zeta})\in \jmath_*A_*(\fZ)$ and hence we have 
\beq\label{c16} \tilde{\xi}=\rho_*\tilde{\zeta}+\jmath_*\tilde{\eta} \quad \text{for some }\tilde{\eta}\in A_*(\fZ).\eeq  
By applying the cycle class map $h_Y:A_*(Y)\to H_*(Y)$ to \eqref{c16}, we obtain a decomposition 
\beq\label{c15} h_Y(\tilde{\xi})=\rho_*(h_{\tY}(\tilde{\zeta}))+\jmath_*(h_\fZ(\tilde{\eta}))\eeq
like \eqref{b30} because the cycle class map is compatible with the pushforward (cf. \cite[Chapter 19]{Fulton}). 

Using \eqref{c16}, we can define the cosection localized Gysin map  similarly as in \eqref{b35} as the composition  
$$A_*(Y)\lra A_*(E|_X(\sigma))\mapright{0^!_{E,\sigma}} A_{*-r}(S)$$ 
where the last arrow is defined in \cite[Corollary 2.9]{KLc}, while the first arrow is the map sending an effective cycle $V$ in $A_*(Y)$ to the normal cone $C_{V\cap X/V}$. As $\sigma\circ s=0$, this cone has support contained in 
$$E|_X(\sigma)=E|_S\cup \mathrm{ker}(E|_{X-S}\to \sO_{X-S}).$$
 
By comparing the proof of \cite[Corollary 2.9]{KLc} with the proof of Theorem \ref{b18}, we find that 
when $\epsilon_Y(\xi)=h_Y(\tilde{\xi})$ for $\xi\in \ih_*(Y)$ and $\tilde{\xi}\in A_*(Y)$, 
\beq\label{c7} s^!_\sigma(\xi)=h_S(0^!_{E,\sigma}([C_{\tilde{\xi}\cap X/\tilde{\xi}}])).\eeq

When $Y$ is smooth, the differential of $s$ gives us the perfect obstruction theory 
$$[T_Y|_X\mapright{ds} E|_X]^\vee$$
equipped with a cosection $\sigma:E\to \mathrm{coker}(ds)\to \sO_X.$
By \cite{KLc}, we have the cosection localized virtual class 
\beq\label{c8} s^!_\sigma[Y]=h_S(0^!_{E,\sigma}[C_{X/Y}])=h_S([X]\virtloc)\eeq
where $[X]\virtloc=0^!_{E,\sigma}[C_{X/Y}]$ denotes the cosection localized virtual cycle defined in \cite[Theorem 5.1]{KLc}

In \S\ref{S3.v}, we will see that if we strengthen Assumption \ref{c5} a bit (cf. Assumption \ref{f21}), all homology classes on $Y$ lift 
to some $\sigma$-regularization $\widetilde{Y}\to Y$ by using Lemma \ref{f19} and thus the cosection localized Gysin map 
$$s^!_\sigma:H_i(Y)\lra H_{i-2r}(S)$$
is defined for all homology classes on $Y$.

\subsection{The Gysin map is canonical} \label{S3.3}

In the proof of Theorem \ref{b18}, we used the blowup of $Y$ along $\fZ$ to resolve the degeneracy of $\sigma.$
But a close examination of the proof reveals that \eqref{b35} is well defined for any birational proper morphism $\bar{\rho}:\bar{Y}\to Y$ such that the pullback $\bar{\sigma}:\bar{\rho}^*E\to \sO_{\bar{Y}}(-\bar{\fZ})$  of $\sigma$ is surjective for an effective Cartier divisor $\bar{\fZ}$ of $\bar{Y}$. Indeed, any such resolution of $\sigma$ gives us the same $s^!_\sigma$. Namely, we may use any resolution of $\sigma$ instead of the blowup of $Y$ along $\fZ$. 
\begin{lemm}\label{b50}
The cosection localized Gysin map $s^!_\sigma=e^{op}_\sigma(E,s)$ in Theorem \ref{b18} is independent of the choice of a resolution $\rho:\tY\to Y$ of the degeneracy of $\sigma$.  
\end{lemm}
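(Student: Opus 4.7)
The plan is to reduce the statement to a single proper birational morphism $\pi:\widetilde{Y}\to\tY$ between two $\sigma$-regularizing resolutions of $Y$, and then to deduce equality of the two versions of formula \eqref{b35} by a double application of the projection formula \eqref{b38}.

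For the reduction, given two $\sigma$-regularizing proper birational morphisms $\rho_j:\tY_j\to Y$ with effective Cartier divisors $\tfZ_j$ for $j=1,2$, I would form a common refinement $\widetilde{Y}$ as the normalization of the closure of the graph of the birational map $\tY_1\dashrightarrow\tY_2$ over $Y$, with induced proper birational morphisms $\pi_j:\widetilde{Y}\to\tY_j$. Since pullback preserves surjectivity, $\widetilde{\rho}\defeq\rho_j\circ\pi_j:\widetilde{Y}\to Y$ is itself $\sigma$-regularizing, and by the uniqueness of the factorization \eqref{b47} both $\pi_1^*\tfZ_1$ and $\pi_2^*\tfZ_2$ coincide with the effective Cartier divisor $\tfZ'$ on $\widetilde{Y}$ associated with $\widetilde{\rho}$. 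Thus it suffices to prove the lemma for a single dominance $\pi:\widetilde{Y}\to\tY$ between two $\sigma$-regularizations.

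For such a $\pi$, fix $\xi\in\ih_i(Y)$ and apply Lemma \ref{b29} to $\widetilde{\rho}=\rho\circ\pi$ to obtain $\zeta'\in H_i(\widetilde{Y})$ and $\eta\in H_i(\fZ)$ with $\epsilon_Y(\xi)=\widetilde{\rho}_*\zeta'+\jmath_*\eta$ and $\zeta'|_{\widetilde{Y}-\tfZ'}=\epsilon_Y(\xi)|_{Y-\fZ}$. Setting $\zeta\defeq\pi_*\zeta'\in H_i(\tY)$ gives a valid decomposition for $\tY$, because $\rho_*\zeta=\widetilde{\rho}_*\zeta'$ and the restrictions to $\tY-\tfZ$ match. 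By the independence of $s^!_\sigma(\xi)$ from the choice of $(\zeta,\eta)$ established in the proof of Theorem \ref{b18}, the $\eta$-contributions to \eqref{b35} are identical for the two resolutions, and only the $\zeta$-contributions need to be compared. The identification $\tfZ'=\pi^*\tfZ$ together with the uniqueness in \eqref{b47} forces $\pi^*F=F'$, $\pi^*\tilde{s}=\tilde{s}'$, and $\pi_X^*\sO_{\tX}(\tS)=\sO_{\widetilde{X}'}(\tS')$ with $\pi_X^*t_{\tS}=t_{\tS'}$, where $\widetilde{X}'=X\times_Y\widetilde{Y}$, $\tS'=\tfZ'\times_{\widetilde{Y}}\widetilde{X}'$, and $\pi_X:\widetilde{X}'\to\widetilde{X}$, $\pi_S:\tS'\to\tS$ are the restrictions of $\pi$.

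The comparison of $\zeta$-terms is then a double application of \eqref{b38}, first along $\pi_X$ and then along $\pi_S$:
\begin{align*}
\pi_{S,*}\bl\zeta'\cap e(F',\tilde{s}')\cap e(\sO_{\widetilde{X}'}(\tS'),t_{\tS'})\br
&=\pi_{X,*}\bl\zeta'\cap e(F',\tilde{s}')\br\cap e(\sO_{\tX}(\tS),t_{\tS})\\
&=\bl\pi_*\zeta'\cap e(F,\tilde{s})\br\cap e(\sO_{\tX}(\tS),t_{\tS})\\
&=\zeta\cap e(F,\tilde{s})\cap e(\sO_{\tX}(\tS),t_{\tS}).
\end{align*}
Pushing forward by $\rho_{S,*}$ and using $\widetilde{\rho}_{S,*}=\rho_{S,*}\circ\pi_{S,*}$ then identifies the $\zeta$-contributions of the two resolutions, finishing the proof. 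The main obstacle I anticipate is the careful bookkeeping of the identifications $\tfZ'=\pi^*\tfZ$ and $F'=\pi^*F$ together with the compatibility of their canonical sections under pullback; once these are in place, repeated application of \eqref{b38} does the rest essentially mechanically.
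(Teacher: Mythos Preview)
Your proof is correct. Both you and the paper reduce to a single dominance $\pi:\widetilde{Y}\to\tY$ of $\sigma$-regularizing morphisms and rely on the identifications $\tfZ'=\pi^*\tfZ$, $F'=\pi^*F$, $\tilde{s}'=\pi^*\tilde{s}$, $t_{\tS'}=\pi_X^*t_{\tS}$ together with the projection formula \eqref{b38}. The difference lies in how the two lifts are chosen. The paper produces $\zeta\in H_i(\tY)$ and $\bar\zeta\in H_i(\bar Y)$ \emph{independently} via two applications of Lemma~\ref{b29} (using the decomposition theorem at both levels), so that $\zeta=\mu_*\bar\zeta+\tilde\jmath_*\lambda$ for some correction $\lambda\in H_i(\tfZ)$; the proof then carries $\lambda$ through a chain of equalities to show the difference of the two expressions \eqref{b35} vanishes. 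You instead apply Lemma~\ref{b29} once, to the dominating resolution $\widetilde{Y}$, and then \emph{define} $\zeta\defeq\pi_*\zeta'$; since the independence of \eqref{b35} from the pair $(\zeta,\eta)$ was already established in the proof of Theorem~\ref{b18}, this choice is admissible and eliminates the $\lambda$-term entirely, reducing the comparison to two clean applications of \eqref{b38}. Your route is a bit more economical; the paper's route, by contrast, verifies directly that any two lifts coming from the decomposition theorem give the same answer, which is closer in spirit to the independence argument in Theorem~\ref{b18} itself.
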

\begin{proof} 
By taking the fiber product of two resolutions of $\sigma$, we only need to consider the case where we have two proper morphisms 
$$\bar{Y}\mapright{\mu} \tY\mapright{\rho} Y$$
which are isomorphisms over the surjective locus $Y-\fZ$ of $\sigma$ and resolve the degeneracy of $\sigma$. Our goal is to show that \eqref{b35} is the same for $\tY$ and $\bar{Y}$. 

We use  the notations in the proof of Theorem \ref{b18}.
By the decomposition theorem \eqref{b25}, we fix injective homomorphisms
\beq\label{b52}
\ih_i(Y)\mapright{\delta} \ih_i(\tY)\mapright{\gamma} \ih_i(\bar{Y}).
\eeq
Let $\xi\in \ih_i(Y)$. By the proof of Lemma \ref{b29}, there exists $\eta\in H_i(\fZ)$ such that
\beq\label{b53} \zeta=\epsilon_{\tY}\delta(\xi),\quad \epsilon_Y(\xi)=\rho_*\zeta+\jmath_*\eta.\eeq
By applying Lemma \ref{b29} to $\delta(\xi)\in \ih_i(\tY)$, there is a $\lambda\in H_i(\tfZ)$ such that
\beq\label{b54}
\zeta=\epsilon_{\tY}\delta(\xi)=\mu_*\epsilon_{\bar{Y}}\gamma\delta(\xi)+\tilde{\jmath}_*(\lambda).\eeq 
By \eqref{b53} and \eqref{b54}, we have 
\[ \epsilon_Y(\xi)=(\rho\mu)_*\epsilon_{\bar{Y}}\gamma\delta(\xi)+\rho_*\tilde{\jmath}_*(\lambda)+\jmath_*\eta\]
\[=(\rho\mu)_*\epsilon_{\bar{Y}}\gamma\delta(\xi)+\jmath_*({\rho_\fZ}_*(\lambda)+\eta).\]
Hence, we may let
\beq\label{b55} \bar{\zeta}=\epsilon_{\bar{Y}}\gamma\delta(\xi)\in H_i(\bar{Y}),\quad 
\bar\eta={\rho_\fZ}_*(\lambda)+\eta\in H_i(\fZ)\eeq
so that $\epsilon_Y(\xi)=(\rho\mu)_*\bar{\zeta}+\jmath_*\bar\eta$ and 
\beq\label{b58} \zeta=\mu_*\bar{\zeta}+\tilde{\jmath}_*(\lambda).\eeq 

By its definition, $e^{op}_\sigma(E,s)$ using $\rho:\tY\to Y$ is 
\beq\label{b56}
-{\rho_{S}}_*\left(\zeta\cap e(F,\tilde{s})\cap e(\sO_{\widetilde{X}}(\tS), t_{\tS})\right) + \eta\cap e(E|_\fZ,s|_\fZ)\eeq
while $e^{op}_\sigma(E,s)$ using  $\rho\mu:\bar{Y}\to Y$ is 
\beq\label{b57}
-(\rho_{S}\mu_S)_*\left(\bar{\zeta}\cap e(\bar{F},\bar{s})\cap e(\sO_{\bar{X}}(\bar{S}), t_{\bar{S}})\right) + \bar{\eta}\cap e(E|_\fZ,s|_\fZ)\eeq
where $\bar{E}, \bar{F}, \bar{s}$ are the pullbacks of $\tE, F, \tilde{s}$ respectively to $\bar{Y}$ by $\mu$ and $\mu_S$ denotes the restriction of $\mu$ to $\mu^{-1}(\tilde{S})$.  

By \eqref{b55} and the projection formula \eqref{b38}, the difference of \eqref{b56} and \eqref{b57} is 
$${\rho_{S}}_*\left((\mu_*\bar{\zeta}-\zeta)\cap e(F,\tilde{s})\cap e(\sO_{\widetilde{X}}(\tS), t_{\tS})\right) - {\rho_\fZ}_*(\lambda)\cap e(E|_\fZ,s|_\fZ)$$
$$=-{\rho_{S}}_*\left(\tilde{\jmath}_*(\lambda)\cap e(F,\tilde{s})\cap e(\sO_{\widetilde{X}}(\tS), t_{\tS})\right) - {\rho_\fZ}_*(\lambda)\cap e(E|_\fZ,s|_\fZ)$$
$$=-{\rho_{S}}_*\left(\lambda\cap e(F|_{\tfZ},\tilde{s}|_{\tfZ})\cap e(\sO_{\widetilde{S}}(\tS))\right) - {\rho_\fZ}_*(\lambda)\cap e(E|_\fZ,s|_\fZ)$$
$$={\rho_{S}}_*\left(\lambda\cap e(F|_{\tfZ},\tilde{s}|_{\tfZ})\cap e(\sO_{\widetilde{S}}(-\tS))\right) - {\rho_\fZ}_*(\lambda)\cap e(E|_\fZ,s|_\fZ)$$
$$={\rho_{S}}_*\left(\lambda\cap e(\tE|_{\tfZ},\tilde{s}|_{\tfZ})\right) - {\rho_\fZ}_*(\lambda)\cap e(E|_\fZ,s|_\fZ)$$
$$={\rho_{\fZ}}_*(\lambda)\cap e(E|_{\fZ},{s}|_{\fZ}) - {\rho_\fZ}_*(\lambda)\cap e(E|_\fZ,s|_\fZ)=0.$$
This proves the lemma.
\end{proof}

\subsection{First properties}\label{S3.4}

In this subsection, we prove a few useful properties of the cosection localized Gysin map that will play key roles for cohomological field theories in \S\ref{S4}. 

\begin{prop}\label{c9}
Under Assumption \ref{c5}, 
we further suppose that there is a morphism $u:Y'\to Y$ which is both proper and placid. 
Let $E', s', \sigma', X', \fZ', S'$, etc denote the pullbacks (by fiber products) of $E, s, \sigma, X, \fZ, S$, etc to $Y'$.  
Let $u_S:S'\to S$ and $u_\fZ$ etc denote the pullbacks of $u$.  
Then we have
\beq\label{c10}
{u_S}_*\circ {s'}_{\sigma'}^!=s_\sigma^!\circ u_*:\ih_i(Y')\lra H_{i-2r}(S).
\eeq
\end{prop}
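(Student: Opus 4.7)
The plan is to build a compatible $\sigma'$-regularization on $Y'$ by pulling back a $\sigma$-regularization on $Y$ via $u$, and then to match the two sides of the desired identity term by term using the projection formula \eqref{b38}.

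First, I would fix a $\sigma$-regularizing proper morphism $\rho:\widetilde{Y}\to Y$ and form the fiber square, producing $\widetilde{u}:\widetilde{Y}':=Y'\times_Y\widetilde{Y}\to\widetilde{Y}$ (proper, as a base change of $u$) and $\rho':\widetilde{Y}'\to Y'$ (proper and birational over $Y'-\fZ'$). After replacing $\widetilde{Y}'$ by its normalization if necessary -- a harmless step by Remark \ref{e50} -- the surjection $\widetilde{\sigma}:\widetilde{E}\twoheadrightarrow\mathcal{O}_{\widetilde{Y}}(-\widetilde{\fZ})$ and its kernel $F$ pull back to the analogous surjection and kernel on $\widetilde{Y}'$, so $\rho'$ is a $\sigma'$-regularization in the sense of Theorem \ref{b18}. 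By Lemma \ref{b50} I may use $\rho'$ (rather than any other resolution) to compute $(s')^!_{\sigma'}$. In particular $F'=\widetilde{u}^*F$, $\widetilde{s}'=\widetilde{u}^*\widetilde{s}$, and the two Euler classes appearing in \eqref{b35} on $Y'$ are $\widetilde{u}$-pullbacks of those on $Y$.

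Next, given $\xi\in\ih_i(Y')$, I apply Lemma \ref{b29} on $Y'$ to obtain $\zeta'\in H_i(\widetilde{Y}')$ and $\eta'\in H_i(\fZ')$ with $\epsilon_{Y'}(\xi)=\rho'_*\zeta'+\jmath'_*\eta'$ and $\zeta'|_{\widetilde{Y}'-\widetilde{\fZ}'}=\epsilon_{Y'}(\xi)|_{Y'-\fZ'}$. The fiber-diagram identities $u_*\rho'_*=\rho_*\widetilde{u}_*$ and $u_*\jmath'_*=\jmath_*u_{\fZ *}$, combined with $u_*\circ\epsilon_{Y'}=\epsilon_Y\circ u_*$ from \eqref{c2}, then give
$$\epsilon_Y(u_*\xi)=\rho_*(\widetilde{u}_*\zeta')+\jmath_*(u_{\fZ *}\eta'),$$
with the restriction condition preserved over $Y-\fZ$. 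Thus $(\widetilde{u}_*\zeta',\,u_{\fZ *}\eta')$ is a valid input for computing $s^!_\sigma(u_*\xi)$ via \eqref{b35}.

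Finally, I apply \eqref{b35} on both sides. The boundary term matches via
$$u_{\fZ *}\bigl(\eta'\cap e(E'|_{\fZ'},s'|_{\fZ'})\bigr)=u_{\fZ *}\bigl(\eta'\cap u_\fZ^* e(E|_\fZ,s|_\fZ)\bigr)=u_{\fZ *}(\eta')\cap e(E|_\fZ,s|_\fZ)$$
by the projection formula \eqref{b38}. The resolved term matches via $u_S\circ\rho'_S=\rho_S\circ\widetilde{u}_S$ and two successive applications of \eqref{b38} -- one for each cap product with a pulled-back Euler class -- to push $\widetilde{u}_{S*}$ through the two cap products. Summing gives $u_{S*}\circ(s')^!_{\sigma'}(\xi)=s^!_\sigma\circ u_*(\xi)$, as required. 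The only subtle step is verifying that the fiber-product resolution $\rho':\widetilde{Y}'\to Y'$ is genuinely a $\sigma'$-regularization; normality of $\widetilde{Y}'$ can fail, but this is exactly the situation anticipated by Remark \ref{e50}. Everything else is routine projection-formula bookkeeping, enabled by the placid-and-proper hypothesis on $u$ (which supplies $u_*$ on intersection homology via \eqref{c1}--\eqref{c2} and properness of all base changes).
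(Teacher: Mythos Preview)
Your proof is correct and follows essentially the same route as the paper's: pull back the $\sigma$-regularization via the fiber product, decompose $\epsilon_{Y'}(\xi)$ as $\rho'_*\zeta'+\jmath'_*\eta'$, push forward by $u_*$ using \eqref{c2} to obtain a valid decomposition for $\epsilon_Y(u_*\xi)$, and then match the two terms of \eqref{b35} via the projection formula \eqref{b38}. You are in fact slightly more careful than the paper in flagging the normality issue for $\widetilde{Y}'$ and invoking Remark \ref{e50}, a point the paper's proof leaves implicit.
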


\begin{proof}
By \eqref{c2}, we have a commutative diagram
\beq\label{c11}\xymatrix{
\ih_i(Y')\ar[r]^{\epsilon_{Y'}} \ar[d]_{u_*} & H_i(Y')\ar[d]^{u_*}\\
\ih_i(Y)\ar[r]^{\epsilon_Y} & H_i(Y).
}\eeq
Pick a resolution $\rho:\tY\to Y$ of the degeneracy of $\sigma$ so that we have an exact sequence \eqref{b47} for an effective Cartier divisor $\tfZ$. Consider the fiber product
\[\xymatrix{
\tY'\ar[r]^{u_{\tY}} \ar[d]_{\rho'} & \tY\ar[d]^\rho \\
Y'\ar[r]^u & Y.
}\]
Pick $\zeta'\in H_i(\tY')$ and $\eta'\in H_i(\fZ')$ such that
\beq\label{c12}\epsilon_{Y'}(\xi')=\rho'_*(\zeta')+\jmath'_*(\eta')\eeq
where $\jmath':\fZ'\to Y'$ denotes the inclusion map. By \eqref{c11}, applying $u_*$ to  \eqref{c12}, we have
\[ \epsilon_Y(u_*\xi')=\rho_*({u_{\tY}}_*\zeta')+\jmath_*({u_\fZ}_*\eta')\]
and hence we may use $\zeta={u_{\tY}}_*\zeta'$ and $\eta={u_\fZ}_*\eta'$ for the computation of $s^!_\sigma(u_*\xi')=e^{op}_\sigma(E,s)(u_*\xi')$. 

By applying \eqref{b38} repeatedly, we have
\beq\label{c13}
{u_\fZ}_*(\eta')\cap e(E|_\fZ,s|_\fZ)={u_S}_*\left( \eta'\cap e(E|_{\fZ'},s|_{\fZ'}) \right),
\eeq
\beq\label{c14}
{u_{\tY}}_*(\zeta')\cap e(F,\tilde{s})\cap e(\sO_{\tX}(\tS),t_{\tS})
={u_S}_*\left( \zeta'\cap e(F',\tilde{s}')\cap e(\sO_{\tX'}(\tS'),t_{\tS'}) \right).
\eeq
Combining \eqref{c13} and \eqref{c14} with the definition of $s^!_\sigma=e^{op}_\sigma(E,s)$ and ${s'}^!_{\sigma'}=e^{op}_{\sigma'}(E',s')$ from \eqref{b35}, we obtain \eqref{c10}.
\end{proof}

\begin{prop}\label{b60}
Under Assumption \ref{c5}, 
we further suppose that there is a locally free sheaf $L$ of rank $l$ on $Y$ and a section $\ell$ with $Y'=\zero(\ell)$.
Suppose $\ell$ is transversal to the zero section of $L$ and the inclusion map $u:Y'\to Y$ is placid so that the pullback $u^*$ equals $\ell^!=e^{op}(L,\ell)$. 
Let $E', s', \sigma', X', \fZ', S'$, etc denote the pullbacks (by fiber products) of $E, s, \sigma, X, \fZ, S$, etc to $Y'$.  
Let $u_S:S'\to S$ and $u_\fZ$ etc denote the pullbacks of $u$.  
Then we have a commutative diagram
\beq\label{b75}\xymatrix{
\ih_i(Y)\ar[r]^{u^*}\ar[d]_{s^!_\sigma=e^{op}_\sigma(E,s)} & \ih_{i-2l}(Y')\ar[d]^{{s'}^!_{\sigma'}=e^{op}_{\sigma'}(E',s')}\\
H_{i-2r}(S)\ar[r]^{u_S^!} & H_{i-2r-2l}(S')
}\eeq
where $u^*$ is the pullback by Proposition \ref{b71} and the bottom horizontal arrow is $u_S^!=e^{op}(L|_S,\ell|_S)$.
\end{prop}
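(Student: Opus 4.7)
The plan is to unwind the definition \eqref{b35} of $s^!_\sigma$ and ${s'}^!_{\sigma'}$ and to compare the two compositions in \eqref{b75} term by term, using the projection formula \eqref{b38}, the commutativity of iterated Euler classes under cap product \eqref{b80}, the compatibility of pullback with proper pushforward in \eqref{c2} and \eqref{c30}, and the identification \eqref{c29} which says $u^*=e^{op}(L,\ell)$, so that the transversal inclusion $u$ acts on Borel--Moore homology simply by capping with the Euler class $e(L,\ell)$. This last fact is the key that lets $u$ interact uniformly with the Euler classes appearing in \eqref{b35}.

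First I would choose a $\sigma$-regularization $\rho:\tY\to Y$ with exact sequence $0\to F\to \tE\to \sO_\tY(-\tfZ)\to 0$ as in the proof of Theorem \ref{b18}, and form the fiber products $\tY'=\tY\times_YY'$, $\tfZ'=\tfZ\times_YY'$, together with $\tX'$ and $\tS'$. Since $\ell$ is transverse to the zero section, $u_\tY:\tY'\to \tY$ is a regular embedding of codimension $l$, so $\tfZ'$ remains an effective Cartier divisor on $\tY'$ and $\rho':\tY'\to Y'$ is a $\sigma'$-regularization. For $\xi\in\ih_i(Y)$, Lemma \ref{b29} produces $\zeta\in H_i(\tY)$ and $\eta\in H_i(\fZ)$ with $\epsilon_Y(\xi)=\rho_*\zeta+\jmath_*\eta$ and $\zeta|_{\tY-\tfZ}=\epsilon_Y(\xi)|_{Y-\fZ}$. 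Setting $\zeta'=u_\tY^!\zeta\in H_{i-2l}(\tY')$ and $\eta'=u_\fZ^!\eta\in H_{i-2l}(\fZ')$, the diagram \eqref{c2} applied to $u^*$ together with the commutation \eqref{c30} will give $\epsilon_{Y'}(u^*\xi)=\rho'_*\zeta'+\jmath'_*\eta'$ with the required restriction condition on $\tY'-\tfZ'$, so that $(\zeta',\eta')$ is a legitimate decomposition for computing ${s'}^!_{\sigma'}(u^*\xi)$ via \eqref{b35} on the regularization $\rho'$.

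The final step is a term-by-term comparison. For the contribution from $\fZ$, one application of \eqref{b80} yields
\[ u^!_S(\eta\cap e(E|_\fZ,s|_\fZ))=\eta\cap e(L|_\fZ,\ell|_\fZ)\cap e(E|_{\fZ'},s|_{\fZ'})=\eta'\cap e(E|_{\fZ'},s|_{\fZ'}).\]
For the contribution from $\tS$, the projection formula \eqref{b38} moves $u^!_S$ through the proper pushforward ${\rho_S}_*$, and two applications of \eqref{b80} reorder the three Euler classes $e(F,\tilde{s})$, $e(\sO_{\tX}(\tS),t_{\tS})$, and $e(L|_\tS,\ell|_\tS)$ so that the $L$-factor acts first on $\zeta$, converting it into $\zeta'$ and producing ${\rho'_S}_*\bigl(\zeta'\cap e(F',\tilde{s}')\cap e(\sO_{\tX'}(\tS'),t_{\tS'})\bigr)$. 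Summing the two contributions recovers precisely ${s'}^!_{\sigma'}(u^*\xi)$, proving \eqref{b75}. The main difficulty is the verification that $\rho'$ is a genuine $\sigma'$-regularization, which rests on the transversality of $\ell$; however, by Lemma \ref{b50} one may always replace $\rho'$ by any further resolution factoring through it without changing the value of the cosection localized Gysin map, so this obstacle is essentially cosmetic and the remainder is bookkeeping with the projection formula and the commutation rule \eqref{b80}.
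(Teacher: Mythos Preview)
Your proposal is correct and follows essentially the same route as the paper: pull back the $\sigma$-regularization $\rho$ to obtain a $\sigma'$-regularization $\rho'$, set $\zeta'=u^!\zeta$ and $\eta'=u^!\eta$, verify via \eqref{b73} and the projection formula that these give a valid decomposition for $u^*\xi$, and then compare the two contributions of \eqref{b35} term by term using \eqref{b80} to shuffle the Euler class $e(L,\ell)$ past $e(F,\tilde s)$ and $e(\sO_{\tX}(\tS),t_{\tS})$. The paper cites \eqref{b38} rather than \eqref{c30} for the commutation of $u^!$ with $\rho_*$ (which is more direct since $u^!$ is literally a cap with an Euler class), but otherwise your argument matches the paper's proof, and your remark that Lemma \ref{b50} disposes of any residual worry about $\rho'$ being a regularization is a useful observation.
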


\begin{proof}
We let $\rho:\tY\to Y$ be a resolution of the degeneracy of $\sigma$, i.e. a proper morphism, isomorphic over $Y-\fZ$, such that the pullback of $\sigma$ by $\rho$ gives us a short exact sequence \eqref{b47}.
Let $\xi\in \ih_i(Y)$ and pick $\zeta\in H_i(\tY)$ and $\eta\in H_i(\fZ)$ satisfying \eqref{b30} and \eqref{b76}. By slightly abusing the notation, let us denote by $u^!$ the homomorphism obtained by the cap product with the pullback of $e(L,\ell)$ by any morphism to $S$. For instance, $u^!\eta=\eta\cap e(L|_\fZ,\ell|_\fZ)\in H_{i-2l}(\fZ').$ By \eqref{b73} and \eqref{b38}, we have
\beq\label{b78} \epsilon_{Y'}(u^*(\xi))=u^!(\epsilon_Y(\xi))=u^!\rho_*(\xi)+u^!\jmath_*(\eta) =\rho'_*(u^!\zeta)+\jmath'_*(u^!\eta)\eeq
where $\jmath':\fZ'\to Y'$ and $\rho':\tY'=\tY\times_YY'\to Y'$ are the restrictions of $\jmath$ and $\rho$ respectively. 
Hence we can use $$\zeta'=u^!\zeta \and \eta'=u^!\eta$$ to compute ${s'}^!_{\sigma'}(u^*(\xi))=e^{op}_{\sigma'}(E',s')(u^*(\xi)).$

For the $\zeta$ part, by \eqref{b80}, we have
\beq\label{b79}
\zeta'\cap e(F',\tilde{s}')\cap e(\sO_{\tX'}(\tS'),t_{\tS'}) = 
u^!\zeta\cap e(F',\tilde{s}')\cap e(\sO_{\tX'}(\tS'),t_{\tS'}) 
\eeq
\[ = \zeta\cap e(L|_{\tY},\ell|_{\tY})\cap e(F',\tilde{s}')\cap e(\sO_{\tX'}(\tS'),t_{\tS'})\]
\[ = \zeta\cap e(L|_{\tY}\oplus F,\ell|_{\tY}\oplus \tilde{s})\cap e(\sO_{\tX'}(\tS'),t_{\tS'}) \]
\[ = \zeta\cap e(F,\tilde{s})\cap e(L|_{\tX},\ell|_{\tX})\cap  e(\sO_{\tX'}(\tS'),t_{\tS'})\]
\[ = \zeta\cap e(F,\tilde{s})\cap e(L|_{\tX}\oplus \sO_{\tX}(\tS),\ell|_{\tX}\oplus t_{\tS})\]
\[ = \zeta\cap e(F,\tilde{s})\cap  e(\sO_{\tX}(\tS),t_{\tS})\cap e(L|_{\tS},\ell|_{\tS})\]
\[= u^!\left( \zeta\cap e(F,\tilde{s})\cap  e(\sO_{\tX}(\tS),t_{\tS}) \right).\]
By the projection formula \eqref{b38}, ${\rho_{S'}}_*\circ u^!=u^!\circ {\rho_{S}}_*$ and thus
\beq\label{b81}
{\rho_{S'}}_*\left( \zeta'\cap e(F',\tilde{s}')\cap e(\sO_{\tX'}(\tS'),t_{\tS'})\right)
=u^!\left[{\rho_S}_*\left( \zeta\cap e(\tE,\tilde{s})\cap  e(\sO_{\tX}(\tS),t_{\tS}) \right) \right].\eeq

For the $\eta$ part, by a similar computation, we have
\beq\label{b83}
\eta'\cap e(E'|_{\fZ'},s'|_{\fZ'})=u^!\eta \cap e(E'|_{\fZ'},s'|_{\fZ'}) =u^!\left( \eta\cap e(E|_\fZ,s|_\fZ) \right).\eeq

Combining \eqref{b35}, \eqref{b78}, \eqref{b79} and \eqref{b83}, we have 
\beq\label{b84} 
{s'}^!_{\sigma'}(u^*(\xi))=u^!s^!_\sigma(\xi)\eeq
as desired. 
\end{proof}

\begin{prop}\label{c17}
Under Assumption \ref{c5}, we further suppose that we have a morphism $\theta:Y'\to Y$ of \DM stacks 
which is placid and obtained by a fiber diagram
\[\xymatrix{
Y'\ar[r]^\theta\ar[d] & Y\ar[d]\\
M'\ar[r]^{\theta_M} & M}\] 
where $M'$ and $M$ are smooth varieties. 
As in Proposition \ref{c9}, let $E'$, $s'$, $\sigma'$, $X'$, $\fZ'$, $S'$, etc denote the pullbacks (by fiber products) of $E, s, \sigma, X, \fZ, S$, etc to $Y'$.  
If $\theta^*=\theta^!:H_*(Y)\to H_*(Y')$,
then 
\beq\label{c20} \theta^!\circ s_\sigma^!={s'}^!_{\sigma'}\circ \theta^*:\ih_i(Y)\lra H_{i-2r}(S').\eeq
\end{prop}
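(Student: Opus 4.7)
The plan is to parallel the proof of Proposition \ref{b60}, expressing both sides of \eqref{c20} via the definition \eqref{b35} and matching them term-by-term using Lemma \ref{c3}. First, I would choose a $\sigma$-regularization $\rho:\tY\to Y$ as in Theorem \ref{b18} and form the fibered square so that $\rho':\tY'=\tY\times_YY'\to Y'$ becomes a $\sigma'$-regularization with exceptional divisor $\tfZ'=\theta_{\tY}^{-1}\tfZ$ and induced placid base-change $\theta_{\tY}:\tY'\to \tY$. For $\xi\in \ih_i(Y)$, Lemma \ref{b29} provides $\zeta\in H_i(\tY)$ and $\eta\in H_i(\fZ)$ with $\epsilon_Y(\xi)=\rho_*\zeta+\jmath_*\eta$ and $\zeta|_{\tY-\tfZ}=\epsilon_Y(\xi)|_{Y-\fZ}$.

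Applying the hypothesis $\theta^*=\theta^!$, the left square of \eqref{c2} for the placid map $\theta$, and the proper base-change identity \eqref{c30} for the proper maps $\rho$ and $\jmath$, I would then obtain
\[\epsilon_{Y'}(\theta^*\xi)=\theta^*\epsilon_Y(\xi)=\theta^!\epsilon_Y(\xi)=\rho'_*(\theta_{\tY}^!\zeta)+\jmath'_*(\theta_\fZ^!\eta).\]
The analogue of the restriction condition \eqref{b76} for $\theta_{\tY}^!\zeta$ follows from the compatibility of the Gysin pullback with open restriction. Hence $\theta^*\xi\in \ih_{i+l}(Y')$ is $\sigma'$-liftable in the sense of \S\ref{c6}, and formula \eqref{b35} applies with the pair $(\theta_{\tY}^!\zeta,\theta_\fZ^!\eta)$ to compute $(s')^!_{\sigma'}(\theta^*\xi)$.

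Finally, I would match the two sides of \eqref{c20} term by term. For the $\zeta$-summand, applying Lemma \ref{c3} repeatedly along the nested Cartesian squares $\tS'\to \tS$ and $\tX'\to \tX$ shows that $\theta^!$ commutes with the cap products by $e(F,\tilde{s})$ and $e(\sO_{\tX}(\tS),t_{\tS})$, producing their pullbacks $e(F',\tilde{s}')$ and $e(\sO_{\tX'}(\tS'),t_{\tS'})$, while the proper base change \eqref{c30} for $\rho_S$ converts $\theta_S^!\circ(\rho_S)_*$ into $(\rho'_S)_*\circ \theta_{\tS}^!$. For the $\eta$-summand, a further application of Lemma \ref{c3} gives $\theta_S^!(\eta\cap e(E|_\fZ,s|_\fZ))=\theta_\fZ^!\eta\cap e(E'|_{\fZ'},s'|_{\fZ'})$. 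Summing the two yields \eqref{c20}. The main obstacle will be purely bookkeeping: tracking the compatibility of $\theta^!$ with each cap product across the tower of nested Cartesian squares. Once Lemma \ref{c3} is invoked uniformly, the identity reduces to a direct repackaging of the definition \eqref{b35}.
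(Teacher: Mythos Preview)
Your proposal is correct and follows exactly the approach the paper indicates: the paper's own proof simply states that the result follows from Lemma \ref{c3}, \eqref{c2} and \eqref{c30} ``similarly as in the proofs of Propositions \ref{c9} and \ref{b60}'' and leaves the details to the reader. You have supplied precisely those details---the fiber-product $\sigma'$-regularization as in Proposition \ref{c9}, the decomposition via Lemma \ref{b29}, and the term-by-term matching using Lemma \ref{c3} and \eqref{c30} as in Proposition \ref{b60}.
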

\begin{proof}
The proof follows from Lemma \ref{c3}, \eqref{c2} and \eqref{c30} similarly as in the proofs of Propositions \ref{c9} and \ref{b60}. We leave the detail to the reader. 
\end{proof}


The following is an immediate consequence of Propositions \ref{b60} and \ref{c17} that will be useful in the subsequent section. 
\begin{coro}\label{b90}
Under the assumptions of Proposition \ref{b60} (resp. \ref{c17}), we further assume that there are smooth morphisms
$q:Y\to Z$ and $q':Y'\to Z$ that fit into a commutative diagram
\beq\label{b86}\xymatrix{
Y'\ar[r]^{f}\ar[dr]_{q'} &Y\ar[d]^q\\
& Z
}\eeq
for an irreducible variety $Z$. Suppose $f$ is $u$ (resp. $\theta$) in Proposition \ref{b60} (resp. \ref{c17}).  
Since $q$ and $q'$ are smooth, all the arrows in \eqref{b86} are placid and thus we have a commutative diagram
\beq\label{b87}\xymatrix{
\ih_i(Z)\ar[d]^{q^*}\ar[dr]^{{q'}^*}\\
\ih_{i+2m}(Y) \ar[r]^{f^*}\ar[d]_{s^!_\sigma} & \ih_{i+2m'}(Y')\ar[d]^{{s'}^!_{\sigma'}}\\
H_{i+2m-2r}(S)\ar[r]^{f^!} & H_{i+2m'-2r}(S')
}\eeq
where $m=\dim_\CC Y-\dim_\CC Z$ and $m'=\dim_\CC Y'-\dim_\CC Z$.
\end{coro}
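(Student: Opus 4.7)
The plan is to decompose the asserted commutative pentagon into the top triangle, involving only placid pullbacks on intersection homology, and the bottom square, which is precisely the content of Proposition \ref{b60} or Proposition \ref{c17}.

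First, I would check that $q$, $q'$, and $f$ are all placid, so that every arrow in the diagram is defined. By hypothesis $q$ and $q'$ are smooth, hence flat, hence placid; and $f$ is either the regular embedding $u$ of Proposition \ref{b60} or the base change $\theta$ of Proposition \ref{c17}, both of which are placid by hypothesis. Consequently Proposition \ref{b71} furnishes the pullbacks $q^*$, $(q')^*$, and $f^*$ on intersection homology with the correct degree shifts $2m$, $2m'$, and $2(m'-m)$, respectively.

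Next, I would verify the top triangle
$$(q')^{*} = f^{*}\circ q^{*}:\ih_i(Z)\lra \ih_{i+2m'}(Y').$$
This is functoriality of the placid pullback. Concretely, a class $\xi\in\ih_i(Z)$ is represented by a geometric chain $c$ that is dimensionally transversal to the relevant stratifications, and Proposition \ref{b71} tells us that both $(q')^{*}\xi$ and $f^{*}q^{*}\xi$ are represented by the inverse image of $c$ under $q'=q\circ f$. Thus $(q')^{-1}(c)=f^{-1}(q^{-1}(c))$ gives the equality of classes.

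Finally, I would invoke the appropriate proposition for the bottom square
$$f^{!}\circ s^{!}_{\sigma}\;=\;{s'}^{!}_{\sigma'}\circ f^{*}:\ih_{i+2m}(Y)\lra H_{i+2m'-2r}(S'),$$
namely Proposition \ref{b60} when $f=u$ (so $f^{!}=u_S^{!}=e^{op}(L|_S,\ell|_S)$ and $m-m'=l$) and Proposition \ref{c17} when $f=\theta$ (so $f^{!}=\theta^{!}$ and $m'-m=\dim_\CC M'-\dim_\CC M$). Combining the triangle with the square gives the corollary.

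I do not anticipate any real obstacle: the two propositions do all the substantive work, and the only item requiring care is the bookkeeping of degree shifts to confirm that the composition $f^{!}\circ s^{!}_{\sigma}\circ q^{*}$ lands in $H_{i+2m'-2r}(S')$ with the shifts agreeing on both routes around the square. That check is routine in each case, as noted above.
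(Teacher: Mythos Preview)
Your proposal is correct and matches the paper's approach: the paper states the corollary as an immediate consequence of Propositions~\ref{b60} and~\ref{c17} without giving a separate proof, and your decomposition into the top triangle (functoriality of placid pullback via Proposition~\ref{b71}) plus the bottom square (which is literally the conclusion of Proposition~\ref{b60} or~\ref{c17}) is exactly how one fills in the details.
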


\begin{rema}
Under Assumption \ref{c5}, if we further assume that there is a smooth morphism $q:Y\to Z$ of relative dimension $m$, we have the composition 
\[ H_i(Z)\mapright{q^*} H_{i+2m}(Y)\mapright{s^!_\sigma} H_{i+2m-2r}(S). \]
This is the Borel-Moore homology version of the cosection localized virtual pullback constructed in \cite{CKL}.
\end{rema}

\subsection{A variation of Theorem \ref{b18}}\label{S3.v}
In this subsection, we prove a stronger result than Theorem \ref{b18} under a slightly stronger assumption that requires $E$, $s$ and $\sigma$ extend to a smooth \DM stack $W$ containing $Y$ as a closed substack. 

\begin{assu}\label{f21}
Let $W$ be a smooth \DM stack over $\CC$ and $s_W$ be a section of an algebraic vector bundle $E_W$ of rank $r$ over $W$,
whose zero locus is denoted by $X=s_W^{-1}(0)$. Let $\sigma_W:E_W\to \sO_W$ be a homomorphism
of coherent sheaves on $W$. Let $Y$ be the zero locus of the regular function $\sigma_W\circ s_W$.  
Let $S=X\times_W \fZ_W$ where $\fZ_W=\sigma_W^{-1}(0)$ denotes the zero locus of $\sigma_W$.
\end{assu}
By definition, we have closed substacks $S\subset X\subset Y\subset W$. 

Under this stronger assumption, the cosection localized Gysin map $s^!_\sigma$ is defined for all homology classes.  
\begin{theo}\label{f18} Under Assumption \ref{f21}, we have
a homomorphism $$s^!_\sigma:H_i(Y)\lra H_{i-2r}(S)$$ such that $\imath_*\circ s_\sigma^!=s^!$. 
\end{theo}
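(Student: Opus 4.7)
The plan is to follow the proof of Theorem \ref{b18} line by line, substituting the intersection-homology decomposition of Lemma \ref{b29} by the Borel--Moore lifting of Lemma \ref{f19}. The point is that Assumption \ref{f21} places $Y$ as a closed substack of a \emph{smooth} ambient stack $W$, and this ambient smoothness is precisely what makes Lemma \ref{f19} applicable to \emph{every} Borel--Moore class on $Y$, not only to $\sigma$-liftable ones as discussed in \S\ref{c6}.

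Concretely, first choose a $\sigma_W$-regularizing morphism $\rho_W\colon\widetilde{W}\to W$, for instance the normalization of the blowup of $W$ along the ideal image of $\sigma_W$, so that
\[
0\lra F_W\lra \rho_W^*E_W\mapright{\tilde\sigma_W}\sO_{\widetilde{W}}(-\widetilde{\fZ}_W)\lra 0
\]
is exact with $\widetilde{\fZ}_W$ an effective Cartier divisor lying over $\fZ_W$. Set $\widetilde{Y}=\rho_W^{-1}(Y)$, $\widetilde{X}=\rho_W^{-1}(X)$, $\widetilde{S}=\widetilde{X}\cap\widetilde{\fZ}_W$, $\rho=\rho_W|_{\widetilde{Y}}$, $\rho_S=\rho_W|_{\widetilde{S}}$, and $F=F_W|_{\widetilde{Y}}$. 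Since $\sigma_W\circ s_W$ vanishes identically on $Y$, its pullback vanishes on $\widetilde{Y}$; hence $\tilde s:=\rho_W^*s_W|_{\widetilde{Y}}$ factors through $F$, exactly as in the proof of Theorem \ref{b18}. Given any $\xi\in H_i(Y)$, Lemma \ref{f19} applied with $(M,\widetilde{M},X)=(W,\widetilde{W},Y)$ supplies classes $\zeta\in H_i(\widetilde{Y})$ and $\eta\in H_i(\fZ_Y)$, where $\fZ_Y=Y\cap\fZ_W$, satisfying
\[
\xi=\rho_*\zeta+\jmath_*\eta,\qquad \zeta|_{\widetilde{Y}-\widetilde{\fZ}_Y}=\xi|_{Y-\fZ_Y}.
\]
We then define $s^!_\sigma(\xi)$ by the very formula \eqref{b35}.

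The remaining verifications transcribe directly from the proof of Theorem \ref{b18}. Independence of the choice of $(\zeta,\eta)$ is established by the chain of cap-product identities \eqref{b43}--\eqref{b46}, which only uses the long exact sequence \eqref{b4}, the projection formula \eqref{b38}, the vanishing of the Euler class of a trivial line bundle, and the short exact sequence \eqref{b47}; notably, that argument never invoked the intersection-homology origin of $\xi$. Independence of the regularization $\rho_W$ is proved word-for-word as in Lemma \ref{b50}, via a common refinement of any two resolutions. The compatibility identity $\imath_*\circ s^!_\sigma=s^!$ follows from the same telescoping cap-product computation carried out at the end of the proof of Theorem \ref{b18}. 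The one genuine novelty is the appeal to Lemma \ref{f19} in place of Lemma \ref{b29}; the main conceptual step, and the only real obstacle, is recognising that the smoothness of the ambient $W$ is precisely what promotes the intersection-homology statement of Theorem \ref{b18} to a statement about all Borel--Moore classes on $Y$.
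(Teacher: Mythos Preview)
Your approach matches the paper's almost exactly: use Lemma \ref{f19} in place of Lemma \ref{b29} to obtain the decomposition $\xi=\rho_*\zeta+\jmath_*\eta$, then repeat the arguments of \S\ref{S3.1} and \S\ref{S3.3} verbatim. There is, however, one technical slip. Lemma \ref{f19} requires both $M$ and $\widetilde{M}$ to be \emph{smooth}: the Alexander--Lefschetz duality isomorphism \eqref{b6} used in its proof needs oriented manifolds on both sides. Your suggested $\widetilde{W}$, the normalization of the blowup of $W$ along the ideal image of $\sigma_W$, need not be smooth, since the blowup of a smooth variety along an arbitrary subscheme is generally singular and normalization does not resolve singularities. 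The paper fixes this by first taking the blowup $\rho_1:W_1\to W$ along $\fZ_W$ (which regularizes $\sigma_W$) and then composing with a resolution of singularities $\rho_2:\widetilde{W}\to W_1$; since $W-\fZ_W$ is already smooth, $\rho_W=\rho_1\circ\rho_2$ is still an isomorphism over $W-\fZ_W$, and the short exact sequence pulls back intact. With this correction your argument goes through. The same care is needed when verifying independence of the regularization: a common refinement of two choices of $\widetilde{W}$ must again be resolved to a smooth stack before Lemma \ref{f19} can be invoked at each stage of the comparison.
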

\begin{proof}
Let $\rho_1:W_1\to W$ be the blowup of $W$ along $\fZ_W$ and let $\fZ_{W,1}$ be the exceptional divisor.
Then the pullback $\sigma_{W,1}$ of $\sigma_W$ gives rise to a short exact sequence
$$0\lra E'_1\lra E_1=\rho_1^*E_W\mapright{\sigma_{W,1}} \sO_{W_1}(-\fZ_{W,1})\lra 0$$
for a subbundle $E'_1$. 
Next, let $\rho_2:\widetilde{W}\to W_1$ be a resolution of singularities. 
Then the above short exact sequence lifts to
$$0  \lra \widetilde{E}'_W\lra \widetilde{E}_W\mapright{\widetilde{\sigma}_{W}} \sO_{\widetilde{W}}(-\widetilde{\fZ}_{W})\lra 0$$
for a Cartier divisor $\widetilde{\fZ}_W$ of $\widetilde{W}$. 
Let $\rho_W=\rho_1\circ\rho_2:\widetilde{W}\to W$. 

As $W-\fZ_W$ is smooth, $\rho_W$ is an isomorphism over $W-\fZ_W$. 
Let $\widetilde{Y}=\widetilde{W}\times_WY$ and $\fZ=\fZ_W\cap Y$.
Let $\jmath:\fZ\to Y$ denote the inclusion.  
Let $\rho:\widetilde{Y}\to Y$ be the restriction of $\rho_W$ to $\widetilde{Y}$.  
Let $E=E_W|_Y$, $s=s_W|_Y$, $\sigma=\sigma_W|_Y$ and $\tfZ=\tfZ_W\times_{\widetilde{W}}\widetilde{Y}$.
By restricting the short exact sequence above, we have a short exact sequence
$$0\lra \widetilde{E}'\lra \widetilde{E}\mapright{\tilde{\sigma}} \sO_{\widetilde{Y}}(-\tfZ)\lra 0.$$
As $\sigma\circ s=0$ by the definition of $Y$, the pullback $\tilde{s}$ of $s$ to $\widetilde{Y}$ 
is a section of $\widetilde{E}'$.

By Lemma \ref{f19}, for any $\xi\in H_i({Y})$, there are $\zeta\in H_i(\widetilde{Y})$ and $\eta\in H_i(\fZ)$ such that
\beq\label{f23} \xi=\rho_*\zeta+\jmath_*\eta \and \xi|_{Y-\fZ}=\zeta|_{Y-\fZ}.\eeq
Now we can repeat all the arguments in \S\ref{S3.1} and \S\ref{S3.3}, using \eqref{f23} instead of \eqref{b30}. 
We thus obtain the homomorphism $s^!_\sigma$ in the theorem which does not depend on any choices involved in its definition. 
\end{proof}


\bigskip

\section{A construction of quantum singularity theory}\label{S4}

In this section, we provide a topological construction of cohomological field theories of singularities by our cosection localized Gysin map in Theorem \ref{b18}. The axioms for cohomological field theories will follow from the propositions in \S\ref{S3}.

\subsection{Setup}\label{S4.1}

Let $S$ be a smooth proper separated \DM stack over $\CC$. 
 Let
\beq\label{a1} M\mapright{\alpha}F \eeq
 be a complex of locally free sheaves on $S$. 
Let $\bp_M:M\to S$ denote the bundle projection. Then $\alpha$ induces a section $s_M$ of $E_M=\bp_M^*F$ over $M$. Let $X$ denote the zero locus of $s_M$.

Let $B=\mathbb{A}_\CC^m$ be an affine space equipped with a quasi-homogeneous nondegenerate polynomial $\underline{w} $. 
Let $\bq_M:M\to B$ be a smooth morphism
and $\sigma_M:E_M\to \sO_M$ be a homomorphism such that 
\beq\label{a2} \sigma_M\circ s_M=\underline{w} \circ\bq_M\and X\cap\sigma_M^{-1}(0)_\redd=S\eeq
where $\sigma_M^{-1}(0)_\redd$ denotes the support of the closed substack of $M$ defined by the image of $\sigma_M$ in $\sO_M$. 
The following diagram summarizes our setup so far.
\beq\label{a3}
\xymatrix{
&& E_M\ar[d]\ar[r]^{\sigma_M} & \sO_M\ar@{=}[r] & M\times\CC\ar[r] & \CC\\
X\ar@{=}[r] & s^{-1}_M(0)\ar[dr]_{\bp_X}\ar@{^(->}[r]^\imath & M\ar@/^1.0pc/[u]^{s_M}\ar[urrr]_{\underline{w} \circ\bq_M}\ar[dr]^{\bq_M}\ar[d]^{\bp_M}\\
&&S&B\ar[r]^{\underline{w} } &\CC
}\eeq
By the first equality in \eqref{a2}, we have the \emph{residue condition}
\beq\label{a7} \bq_M(X)\subset \underline{w} ^{-1}(0)=:Z.\eeq

Let $Y=Z\times_{B}M$ so that we have a fiber diagram
\beq\label{d1}\xymatrix{
X\ar[dr]_{\bq_X}\ar@{^(->}[r]^\imath & Y\ar[r] \ar[d]^{\bq_Y} &M\ar[d]^{\bq_M}\\
&Z \ar@{^(->}[r] & B
}\eeq
where $\bq_Y$ is smooth since $\bq_M$ is smooth. We denote the restriction of $E_M$ (resp. $\sigma_M$, resp. $s_M$) to $Y$ by $E$ (resp. $\sigma$, resp. $s$) so that  $X=s^{-1}(0)$. By \eqref{a7}, we have $\sigma\circ s=0$.

Since $Z$ has only an isolated hypersurface singularity, if $\dim_\CC Z=m-1\ge 2$, $Z$ is a normal affine variety and hence $Y$ is a normal \DM stack as $\bq_Y$ is smooth. 
When $m=2$, we replace $Z$ by its normalization. Since the intersection homology remains the same under normalization, all the arguments in this paper go through (cf. Remark \ref{e50}). 
The case $m=1$ in our FJRW setup (cf. \S\ref{S4.5}) occurs only when $N=1$ and there is only one broad marking. In this case, the section $x$ must vanish  
by the residue theorem. Hence the cosection localization of \cite{KLc} applies as in \cite{CLL}. 
With this preparation, we can now apply Theorem \ref{b18}.

By Theorem \ref{b18}, we thus have the cosection localized Gysin map
\beq\label{d2} s_\sigma^!=e^{op}_\sigma(E,s):\ih_i(Y)\lra H_{i-2r_1}(S)\eeq
where $r_1$ is the rank of $E$. Moreover since $\bq_Y$ is smooth, we have the pullback homomorphism
\beq\label{d3} \bq_Y^*:\ih_i(Z)\lra \ih_{i+2D_{\bq_Y}}(Y)\eeq
where $D_{\bq_Y}$ is the relative complex dimension of $\bq_Y$ which equals 
\beq\label{d4}
D_{\bq_Y}=\dim_\CC M-\dim_\CC B=\dim_\CC S+r_0-\dim_\CC B
\eeq
where $r_0$ denotes the rank of the vector bundle $\bp_M:M\to S$.
Composing \eqref{d2} and \eqref{d3}, we obtain 
\beq\label{d5}
\Phi:\ih_i(Z)\lra H_{2\dim_\CC S-2D+i-2\dim_\CC B}(S)
\eeq
where 
\beq\label{d6}
D=-r_0+r_1=-\mathrm{rank}(M)+\mathrm{rank}(E)=-\mathrm{rank}(M)+\mathrm{rank}(F)
\eeq
following the notation of \cite[(52)]{FJR}. If \eqref{a1} is a resolution of a sheaf complex $\mathcal{U}$, then $D$ is minus the rank of the complex $\mathcal{U}$.

\bigskip

Suppose we have another complex 
\beq\label{e36} M'\mapright{\alpha'} F'\eeq 
satisfying all the assumptions for \eqref{a1} like \eqref{a2}
so that \eqref{e36} also gives us diagrams like \eqref{a3} and \eqref{d1}, where $M$, $Y$ and $X$ are replaced by $M'$, $Y'$ and $X'={s'}^{-1}(0)$. (We use the same $S$, $Z$ and $B$.) By the recipe above, we then have a vector bundle $E'$ on $Y'$ equipped with a section $s'$ and a cosection $\sigma'$ as well as the cosection localized Gysin map ${s'}^!_{\sigma'}:\ih_*(Y')\to H_*(S)$.  We can compare $s_\sigma^!$ and ${s'}^!_{\sigma'}$ in the following two cases. 
\begin{prop}\label{e34}
Suppose \eqref{a1} and \eqref{e36} fit into a commutative diagram of exact sequences of locally free sheaves
\[\xymatrix{
0\ar[r] & M'\ar[r]\ar[d]_{\alpha'} & M\ar[r]\ar[d]^{\alpha} & K\ar[r]\ar@{=}[d] &0\\
0\ar[r] & F'\ar[r] & F\ar[r] &K\ar[r] &0.
}\]
If $\sigma'=\sigma|_{E'}$ and $\sigma^{-1}(0)\times_YY'={\sigma'}^{-1}(0)$, then ${s'}^!_{\sigma'}\circ\imath^*=s^!_\sigma$ where $\imath:Y'\to Y$ denotes the placid inclusion induced from the inclusion map  $M'\to M$.
\end{prop}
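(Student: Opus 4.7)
The plan is to reduce Proposition \ref{e34} to a direct term-by-term comparison of the explicit formula \eqref{b35} for the cosection localized Gysin map, using a single $\sigma$-regularization that is compatible with the inclusion $\imath$.

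First I identify $\imath\colon Y'\hookrightarrow Y$ as the zero locus of a transversal section. The short exact sequence $0\to M'\to M\to K\to 0$ of locally free sheaves on $S$ realizes the total space of $M'$ inside that of $M$ as the zero locus of the tautological section of $\bp_M^*K$; restricting to $Y\subset M$, $Y'$ is cut out transversally by a section $\ell$ of the rank-$l$ bundle $L=\bp_M^*K|_Y$, where $l=\rank(K)$. Hence $\imath$ is placid and on Borel-Moore homology $\imath^*=e^{op}(L,\ell)$ by \eqref{c29}. Interpreting the hypothesis scheme-theoretically, the degeneracy locus $\fZ:=\sigma^{-1}(0)$ is contained in $Y'$ and coincides there with $(\sigma')^{-1}(0)$; consequently $S=S'$, and $\rank(E)=\rank(E')+l$ ensures the two sides of the proposition land in the same Borel-Moore group.

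Next I fix a compatible resolution. Let $\rho\colon\tY\to Y$ be the normalized blowup along $\fZ$; its restriction $\rho'\colon\widetilde{Y'}:=\rho^{-1}(Y')\to Y'$ is simultaneously a $\sigma'$-regularization because the hypothesis identifies the ideal images of $\sigma$ and $\sigma'$. The inclusion $\widetilde{Y'}\subset\tY$ is cut out by $\rho^*\ell$ with normal bundle $N=\rho^*L|_{\widetilde{Y'}}$, and the kernel bundles $F=\ker(\tsig)$ on $\tY$ and $F'$ on $\widetilde{Y'}$ fit into $0\to F'\to F|_{\widetilde{Y'}}\to N\to 0$, through which the pullback section $\widetilde s|_{\widetilde{Y'}}$ factors as $\widetilde{s'}$. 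Given $\xi\in\ih_i(Y)$ with a decomposition $\epsilon_Y(\xi)=\rho_*\zeta+\jmath_*\eta$ from Lemma \ref{b29}, combining \eqref{c24}, \eqref{c29}, and the projection formula \eqref{b38} yields
\[
\epsilon_{Y'}(\imath^*\xi)=\rho'_*\bigl(\zeta\cap e(\rho^*L,\rho^*\ell)\bigr)+\jmath'_*\bigl(\eta\cap e(L|_{\fZ})\bigr),
\]
where $\jmath'\colon\fZ\hookrightarrow Y'$ and $\ell|_\fZ=0$.

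Plugging both decompositions into \eqref{b35} splits the comparison into two identities in $H_*(S)$. The $\fZ$-term follows from Whitney's formula \eqref{b40} applied to $0\to E'|_\fZ\to E|_\fZ\to L|_\fZ\to 0$ (through which $s|_\fZ$ factors as $s'|_\fZ$) together with \eqref{c33}, giving $\eta\cap e(L|_\fZ)\cap e(E'|_\fZ,s'|_\fZ)=\eta\cap(e(E'|_\fZ,s'|_\fZ)\cup e(L|_\fZ))=\eta\cap e(E|_\fZ,s|_\fZ)$. The $\rho$-term reduces, via \eqref{c33}, to the cohomological identity
\[
e(F,\widetilde s)\cup e(\sO(\tfZ),t)=e(\rho^*L,\rho^*\ell)\cup\bigl(e(F',\widetilde{s'})\cup e(\sO(\tfZ|_{\widetilde{Y'}}),t)\bigr)
\]
in $H^*_{\widetilde S}(\tY)$, which I establish through the Thom isomorphism $H^{*-2l}_{\widetilde S}(\widetilde{Y'})\cong H^*_{\widetilde S}(\tY)$: the restriction of the left-hand side to $\widetilde{Y'}$ equals, by Whitney \eqref{b40} for $0\to F'\to F|_{\widetilde{Y'}}\to N\to 0$, the product $e(F',\widetilde{s'})\cup e(N)|_{\widetilde{X'}}\cup e(\sO(\tfZ|_{\widetilde{Y'}}),t)$, and the extra factor $e(N)$ is exactly the Euler class that the Thom isomorphism extracts on the zero section. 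The main obstacle is executing this Thom-isomorphism bookkeeping precisely; a naive iteration of Proposition \ref{b60} (once for $\imath$ and once for the bundle inclusion $E'\subset E|_{Y'}$) would introduce an uncancellable cap factor of $e(L|_S)$, so the argument must proceed directly from \eqref{b35}.
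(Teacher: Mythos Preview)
Your approach differs from the paper's: you attempt a direct term-by-term comparison of formula \eqref{b35}, whereas the paper first reduces to the case $\sigma=0$ (using a compatible $\sigma$-regularization and lifts, exactly as you set up) and then handles that case by a homotopy argument---continuously deforming the extensions $0\to M'\to M\to K\to 0$ and $0\to F'\to F\to K\to 0$ to split ones, where the identity follows from the direct-sum Euler class formula of \cite[IX.9.3]{Iver}.

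Your argument has a genuine gap at the step you yourself flag. Your Thom-isomorphism verification for the $\rho$-term only checks the cohomological identity after restriction to $\widetilde{Y'}$: you compute $j^*(\text{LHS})$ via Whitney on $\widetilde{Y'}$ and observe that it equals $j^*j_*(\text{RHS factor})=(\text{RHS factor})\cup e(N)$. But under the Thom isomorphism $j_*\colon H^{*-2l}_{\tS}(\widetilde{Y'})\xrightarrow{\cong}H^*_{\tS}(\tY)$, the map $j^*$ becomes cup product with $e(N)$, which need not be injective. So knowing $j^*(\text{LHS})=j^*(\text{RHS})$ does not give $\text{LHS}=\text{RHS}$. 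A secondary issue: the claim $\fZ\subset Y'$ (equivalently $\ell|_\fZ=0$) is unjustified---the hypothesis $\sigma^{-1}(0)\times_YY'=(\sigma')^{-1}(0)$ only yields $\fZ\cap Y'=\fZ'$---so your Whitney argument for the $\eta$-term, which assumes $s|_\fZ$ factors through $E'|_\fZ$, also fails as written.

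Both gaps dissolve if you adopt the paper's device: choose a continuous splitting $E\cong\hat E'\oplus L$ over all of $Y$ (extensions of complex bundles split topologically), write $s=(s_1,\ell)$ globally, and use $e(E,s)=e(\hat E',s_1)\cup e(L,\ell)$. This gives the $\sigma=0$ identity $s^!=(s')^!\circ\imath^*$ directly on $Y$, $\fZ$, and $\tY$ without ever restricting to $Y'$, after which your term-by-term comparison of \eqref{b35} goes through.
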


\begin{proof}
Suppose $\sigma=0$ so that we only have the ordinary topological Gysin maps. We can continuously deform $(M,s)$ to $(M'\oplus K,s'\oplus 1_K)$. The proposition is a direct consequence of \cite[IX.9.3]{Iver} for the direct sum case. Since $s^!$ is defined over $\QQ$, $s^!$ remains constant under the deformation. So the proposition holds when $\sigma=0$.

Under our assumption, if we pick a $\sigma$-regularizing morphism $\tilde{Y}\to Y$, then the fiber product $\tilde{Y}'=\tilde{Y}\times_YY'\to Y'$ is $\sigma'$-regularizing. If $\zeta$ is a lift of $\xi$ to $H_*(\tilde{Y})$, $\imath^*\zeta$ is a lift of $\imath^*\xi$ to $H_*(\tilde{Y}')$. The proposition easily follows from the definition of $s^!_\sigma$ in Theorem \ref{b18} and the case for $\sigma=0$. 
\end{proof}

\begin{prop}\label{e35}
Suppose \eqref{a1} and \eqref{e36} fit into a commutative diagram of exact sequences of locally free sheaves
\[\xymatrix{
0\ar[r] &K\ar[r] \ar@{=}[d]& M'\ar[r]\ar[d]_{\alpha'} & M\ar[r]\ar[d]^{\alpha}  &0\\
0\ar[r] &K\ar[r] & F'\ar[r] & F\ar[r] &0.
}\]
If $\sigma'=\sigma|_{E'}$, then ${s'}^!_{\sigma'}\circ f^*=s^!_\sigma$ where $f:Y'\to Y$ denotes the smooth morphism induced from the surjection  $M'\to M$.
\end{prop}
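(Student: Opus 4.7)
Two observations drive the proof. First, the leftmost vertical arrow in the hypothesized diagram is the identity $K\to K$, forcing $\alpha'|_K=\mathrm{id}_K$; consequently the cosection $\sigma'=\sigma|_{E'}$ vanishes on the subbundle $K\subset E'$ and factors as $E'\twoheadrightarrow f^*E\xrightarrow{f^*\sigma}\sO_{Y'}$. Second, the surjection $M'\twoheadrightarrow M$ realizes $f:Y'\to Y$ as the total space of the vector bundle $\bp_Y^*K$, so $f$ is smooth (hence placid) of relative dimension $\mathrm{rk}(K)$; moreover $f^{-1}(S)=K|_S\subset Y'$, and on this locus the section $s'=\alpha'|_{M'}$ restricts to the tautological section of the vector bundle $f_S:K|_S\to S$ (again by $\alpha'|_K=\mathrm{id}_K$).

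The plan is to factor ${s'}^!_{\sigma'}$ in two steps along the exact sequence $0\to K\to E'\to f^*E\to 0$. Choose a $\sigma$-regularizing morphism $\rho:\tY\to Y$ with short exact sequence $0\to F\to \rho^*E\to \sO_{\tY}(-\tfZ)\to 0$ as in Theorem \ref{b18}; its base change $\rho':\tY':=\tY\times_Y Y'\to Y'$ is $\sigma'$-regularizing because $f$ is smooth and $\tfZ'=\tilde f^*\tfZ$, with the kernel $F_{\tY'}$ of $\sigma'|_{\tY'}$ fitting in $0\to K\to F_{\tY'}\to \tilde f^*F\to 0$. For $\xi\in \ih_*(Y)$ decomposed by Lemma \ref{b29} as $\xi=\rho_*\zeta+\jmath_*\eta$, the smoothness of $f$ and \eqref{c25} lift this to $f^*\xi=\rho'_*(\tilde f^*\zeta)+\jmath'_*(f|_{\fZ'}^*\eta)$. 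Substituting into formula \eqref{b35} for ${s'}^!_{\sigma'}(f^*\xi)$, applying a relative Whitney decomposition $e(F_{\tY'},\tilde s')=\tilde f^*e(F,\tilde s)\cup e(K|_{\tilde f^{-1}(\tX)},\tilde s'_K)$ together with Lemma \ref{c3}, the projection formula \eqref{b38}, and the identity $e(\sO_{\tX'}(\tS'),t_{\tS'})=\tilde f^*e(\sO_{\tX}(\tS),t_{\tS})$ coming from $\tfZ'=\tilde f^*\tfZ$, and arguing analogously for the $\eta$-part, one rearranges to
\begin{equation*}
{s'}^!_{\sigma'}(f^*\xi)=f_S^!\bigl(s^!_\sigma(\xi)\bigr)\cap e\bigl(K|_{f^{-1}(S)},\,s'|_{f^{-1}(S)}\bigr).
\end{equation*}

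The proof then concludes immediately: since $s'|_{f^{-1}(S)}$ is the tautological section of the vector bundle $f_S:f^{-1}(S)\to S$, capping with its Euler class inverts the Thom pullback, giving $f_S^!(\alpha)\cap e(K|_{f^{-1}(S)},s'|_{f^{-1}(S)})=\alpha$ for every $\alpha\in H_*(S)$. Applied to $\alpha=s^!_\sigma(\xi)$ this yields the desired ${s'}^!_{\sigma'}\circ f^*=s^!_\sigma$. The main technical obstacle is the relative Whitney decomposition above: since $\tilde s'$ is a section of the ambient bundle $F_{\tY'}$ rather than of the subbundle $K$, one cannot apply \eqref{b40} directly. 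Instead one uses that the quotient section $\tilde f^*\tilde s$ of $\tilde f^*F$ vanishes on $\tilde f^{-1}(\tX)$, where $\tilde s'$ factors through $K$ as $\tilde s'_K$ with zero locus $\tX'$, and invokes the relative cup product $H^*(\tY',\tY'-\tilde f^{-1}\tX)\otimes H^*(\tilde f^{-1}\tX,\tilde f^{-1}\tX-\tX')\to H^*(\tY',\tY'-\tX')$ of \eqref{c33} to assemble the two Euler classes into $e(F_{\tY'},\tilde s')$.
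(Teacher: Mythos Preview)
Your proof is correct and follows the same overall strategy the paper indicates (the paper omits the proof, referring to Proposition~\ref{e34}): lift the decomposition $\epsilon_Y(\xi)=\rho_*\zeta+\jmath_*\eta$ along the smooth $f$ via base change to a $\sigma'$-regularizing $\rho'$, then compare the two instances of \eqref{b35} term by term using Euler class identities. The one genuine difference is how you handle the comparison of the ordinary (non-localized) Gysin maps: the paper, in the proof of Proposition~\ref{e34}, continuously deforms the extension to a direct sum and invokes that $s^!$ is defined over $\QQ$ together with \cite[IX.9.3]{Iver}, whereas you avoid deformation and establish the needed identity $e(F_{\tY'},\tilde s')=\tilde f^*e(F,\tilde s)\cup e(K|_{\tilde f^{-1}\tX},\tilde s'_K)$ directly via the relative cup product. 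Both routes are standard; yours is a bit more explicit and makes the intermediate formula ${s'}^!_{\sigma'}(f^*\xi)=f_S^!(s^!_\sigma(\xi))\cap e(K|_{f^{-1}S},s'|_{f^{-1}S})$ visible.

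One small imprecision worth flagging: $s'|_{f^{-1}(S)}$ is not literally the tautological section of $K|_S\to S$ but rather the tautological section shifted by a section pulled back from $S$ (coming from $\alpha'(m'_0)$ for a chosen lift $m'_0$). This does not affect your conclusion, since the shifted section is homotopic to the tautological one and its zero locus is still a section of the bundle, so capping with its localized Euler class inverts the flat pullback $f_S^!$ just the same.
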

\begin{proof}
The proof is similar to Proposition \ref{e34} and we omit it. 
\end{proof}

\subsection{State space}\label{S4.2}
In this subsection, we recall the basic setup for the Fan-Jarvis-Ruan-Witten theory from \cite{FJR} and define our state space. 
 
 Let $w:\CC^N\to \CC$ be a quasi-homogeneous polynomial with 
\beq\label{d7}
w(t^{d_1}x_1,\cdots,t^{d_N}x_N)=t^d \cdot w(x_1,\cdots,x_N),\quad \exists d_i, d\in \ZZ_{>0}\eeq
which is nondegenerate, i.e.
\begin{enumerate}
\item $w$ has no monomial of the form $x_ix_j$ for $i\ne j$ and
\item the hypersurface 
$$Q_w=\PP w^{-1}(0)\subset \PP^{N-1}_{d_1,\cdots,d_N}$$
defined by $w$ is nonsingular.
\end{enumerate} 
We assume $d>0$ is minimal possible and $d>d_i$. Let $q_i=d_i/d$.  
The second condition above implies that the hypersurface $w^{-1}(0)\subset \CC^N$ has only an isolated singularity at $0$ and that $q_i\le \frac12$ are uniquely determined by $w$. 

Writing $w=\sum_{k=1}^\nu c_kw_k$ with $c_k\in \CC^*$ and $w_k$ distinct monomials, we obtain a homomorphism
\beq\label{d25} (w_1,\cdots, w_\nu): (\CC^*)^N\lra (\CC^*)^\nu \eeq
whose kernel is the symmetry group of $w$ defined by
\beq\label{d8} G_w=\{(\lambda_1\cdots,\lambda_N)\in (\CC^*)^N\,|\,w(\lambda_1 x_1,\cdots,\lambda_N x_N)=w(x_1,\cdots,x_N)\}\eeq
which is always finite under our assumption. 
By \eqref{d7}, $G_w$ has an element
$$J_w=(e^{2\pi iq_1},\cdots, e^{2\pi iq_N})\in G_w.$$
We fix a subgroup $G$  of $G_w$ containing $J_w$. The input datum for the FJRW quantum singularity theory is the pair $(w,G)$, sometimes denoted $w/G$.

Consider the fiber product 
\[\xymatrix{
\hat{G}_w\ar[r] \ar[d] &\CC^*\ar[d]\\
(\CC^*)^N\ar[r] & (\CC^*)^\nu
}\]
where the bottom horizontal is \eqref{d25} and the right vertical is the diagonal embedding. 
By \eqref{d7}, the homomorphism $$\CC^*\to  (\CC^*)^N,\quad t\mapsto (t^{d_1},\cdots,t^{d_N})$$
factors through $\hat{G}_w$ and we thus have a surjective homomorphism $G_w\times \CC^*\to \hat{G}_w$ whose kernel is $\mu_d=\{z\in \CC\,|\, z^d=1\}.$
Hence the subgroup $G$ of $G_w$ defines a subgroup 
$$\hat{G}=G\times\CC^*/\mu_d$$ of $\hat{G}_w$ that fits into the exact sequence 
\beq\label{d26} 1\lra G \lra \hat{G} \mapright{\chi} \CC^*\lra 1.\eeq
Note that for ${\lambda}\in \hat{G}\subset (\CC^*)^N$, 
\beq\label{d31} w(\ulambda\cdot x)=\chi(\ulambda)w(x).\eeq 
In particular, $w(\ulambda\cdot x)=-w(x)$ if and only if $\ulambda\in\chi^{-1}(-1)$.

For the singularity $w/G$, the \emph{state space} is defined as
\beq\label{d9} \cH=\bigoplus_{\gamma\in G}\cH_\gamma,\quad \cH_\gamma=H^{N_\gamma}(\CC^{N_\gamma},w_\gamma^\infty)^G\eeq
where $\CC^{N_\gamma}$ denotes the $\gamma$-fixed subspace of $\CC^N$, $w_\gamma$ is the restriction of $w$ to $\CC^{N_\gamma}$ and  $w_\gamma^\infty=(\mathrm{Re}(w_\gamma))^{-1}(a,\infty)$ for $a>\!>0$. When $\gamma=(e^{2\pi iq_1},\cdots, e^{2\pi iq_N})$,  $\cH_\gamma=\CC$ since $N_\gamma=0$. The constant function $1$ in $\cH_{(e^{2\pi iq_1},\cdots, e^{2\pi iq_N})}$ is denoted by $\mathbf{1}$.

\begin{lemm}\label{g0} We have a natural isomorphism
$$H^{N_\gamma}(\CC^{N_\gamma},w_\gamma^\infty)^G\cong H^{N_\gamma-2}_{\mathrm{prim}}(Q_{w_\gamma})^G.$$
\end{lemm}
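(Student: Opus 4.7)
The plan is to identify the left-hand side with $H^{N_\gamma-1}(F)$ for the Milnor fiber $F:=w_\gamma^{-1}(c)$ of $w_\gamma$ at some value $c\in\CC$ with $\mathrm{Re}(c)>a$, then use the quasi-homogeneous $\CC^*$-action to realize $F$ as a $\mu_d$-Galois cover of the weighted-projective complement of $Q_{w_\gamma}$, and finally match the $G$-invariants on the two sides via a residue/Lefschetz computation.

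First, I would use the long exact sequence of the pair $(\CC^{N_\gamma},w_\gamma^\infty)$ together with the contractibility of $\CC^{N_\gamma}$ to obtain $H^{N_\gamma}(\CC^{N_\gamma},w_\gamma^\infty)\cong H^{N_\gamma-1}(w_\gamma^\infty)$. By quasi-homogeneity, $w_\gamma\colon\CC^{N_\gamma}\setminus w_\gamma^{-1}(0)\to\CC^*$ is a $G$-equivariant locally trivial fiber bundle with fiber $F$; restricting to the contractible half-plane base $\{w\in\CC\colon\mathrm{Re}(w)>a\}$ trivializes this bundle $G$-equivariantly, and hence $w_\gamma^\infty\simeq F$ as $G$-spaces. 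This yields
\[ H^{N_\gamma}(\CC^{N_\gamma},w_\gamma^\infty)\;\cong\;H^{N_\gamma-1}(F)\qquad\text{as $G$-representations.} \]

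Second, the $\CC^*$-quotient $\CC^{N_\gamma}\setminus\{0\}\to \PP^{N_\gamma-1}_{(d_i)}$ restricts on $F$ to a $\mu_d$-Galois \'etale cover $F\to Y:=\PP^{N_\gamma-1}_{(d_i)}\setminus Q_{w_\gamma}$ whose deck group is generated by $J_{w_\gamma}=(e^{2\pi i q_1},\ldots,e^{2\pi i q_{N_\gamma}})\in G$. The transfer decomposition reads
\[ H^{N_\gamma-1}(F)\;=\;\bigoplus_{k=0}^{d-1}H^{N_\gamma-1}(Y;\cL^{k}), \]
with $\mu_d$ acting on the $k$-th summand through its $k$-th character. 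Since $\mu_d=\langle J_{w_\gamma}\rangle$ acts on $\CC^{N_\gamma}$ by the weighted scaling action with weights $(d_i)$, it acts trivially on the weighted-projective quotient $Y$; consequently only the untwisted summand $k=0$ can contribute to the $G$-invariants, giving $H^{N_\gamma-1}(F)^G=H^{N_\gamma-1}(Y)^G$.

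Third, I would run the long exact sequence for the pair $(\PP^{N_\gamma-1}_{(d_i)},Y)$, combined with the Thom isomorphism $H^{k}(\PP^{N_\gamma-1}_{(d_i)},Y)\cong H^{k-2}(Q_{w_\gamma})$ for the smooth hypersurface $Q_{w_\gamma}$ and the Lefschetz hyperplane theorem, to identify $H^{N_\gamma-1}(Y)\cong H^{N_\gamma-2}_{\mathrm{prim}}(Q_{w_\gamma})$ after a short diagram chase isolating the ambient hyperplane-class contribution; this identification is natural for the $G$-action, so passing to $G$-invariants and combining with the previous step yields the lemma. The main obstacle is the second step---correctly setting up the cyclic cover with its compatible $G$-action and verifying that the deck group $\mu_d=\langle J_{w_\gamma}\rangle$ is contained in $G$, so that only the trivial-character component survives $G$-invariance; the remaining ingredients are standard long-exact-sequence and residue-type arguments.
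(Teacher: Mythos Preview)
Your approach is correct and genuinely different from the paper's. You work directly with the Milnor fiber $F=w_\gamma^{-1}(1)$, identify $H^{N_\gamma}(\CC^{N_\gamma},w_\gamma^\infty)\cong H^{N_\gamma-1}(F)$, and then pass to the weighted-projective hypersurface complement $Y=\PP^{N_\gamma-1}_{(d_i)}\setminus Q_{w_\gamma}$ via the cyclic cover $F\to Y$, using that $J_w\in G$ to kill the nontrivial isotypic pieces in the transfer decomposition; finally a standard Gysin/Lefschetz computation gives $H^{N_\gamma-1}(Y)\cong H^{N_\gamma-2}_{\mathrm{prim}}(Q_{w_\gamma})$. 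The paper instead passes to $\hat G$-equivariant cohomology: it replaces $\CC^{N_\gamma}/G$ by $(\CC^{N_\gamma}\times\CC^*)/\hat G$, introduces the superpotential $\mathbf{w}(z,t)=t\,w(z)$ on the total space of the line bundle $\cL(\chi^{-1})$ over $\PP^{N_\gamma-1}_{(d_i)}$, and uses an Atiyah--Bott splitting of the Gysin sequence together with a Morse/Picard--Lefschetz identification of $H^{N_\gamma}(\cL(\chi^{-1}),w^\infty)$ with $H^{N_\gamma-2}(Q_{w_\gamma})$, from which the primitive part emerges as the quotient by $H^{N_\gamma-2}_{\hat G}(\CC^{N_\gamma})$. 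Your argument is more elementary and self-contained for the quasi-homogeneous case; the paper's line-bundle/equivariant route is chosen because, as remarked later in \S5, the same proof verifies item (1) of Assumption~\ref{f35} for hybrid GLSM models, where the Milnor-fiber picture is less readily available. One small point to tighten: the deck group of $F\to Y$ is $\mu_d$ as a quotient of the weighted $\CC^*$, but its image in $\mathrm{Aut}(F)$ is $\langle J_{w_\gamma}\rangle$, which can be a proper quotient of $\mu_d$ when $\gcd(d_i: i\ \gamma\text{-fixed})>1$; this does not affect your conclusion since $J_w\in G$ still forces $G$-invariants into the $\mu_d$-invariant summand $H^*(Y)$, but the phrasing ``$\mu_d=\langle J_{w_\gamma}\rangle$'' should be adjusted accordingly.
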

\begin{proof} To keep the notation simple, we delete $\gamma$ in the proof. Since $G$ is finite, 
\beq\label{g1} H^N(\CC^N,w^\infty)^G=H^N(\CC^N/G,w^\infty).\eeq
Since $\CC^N/G\cong (\CC^N\times\CC^*)/\hat{G}$ where $\hat{G}$ acts on the last component by weight $\chi^{-1}$ (cf. \S\ref{S1}), we have
\beq\label{g2} H^N(\CC^N/G,w^\infty)\cong H^N( (\CC^N\times\CC^*)/\hat{G},\mathbf{w}^\infty)
\cong H^N_{\hat{G}}( \CC^N\times\CC^*,\mathbf{w}^\infty)\eeq
where $\mathbf{w}(z,t)=t\cdot w(z)$ for $(z,t)\in \CC^N\times \CC^*$. 
The last isomorphism to equivariant cohomology follows from the fact that $\hat{G}$ acts quasi-freely. 
By the well-known argument of Atiyah-Bott, as the weight on the last component $\CC$ is nontrivial, the Gysin sequence 
\beq\label{g3} \cdots \lra H^{N-2}_{\hat{G}}( \CC^N)\lra H^N_{\hat{G}}( \CC^N\times\CC,\mathbf{w}^\infty)\lra H^N_{\hat{G}}( \CC^N\times\CC^*,\mathbf{w}^\infty)\lra \cdots\eeq
splits into short exact sequences
\beq\label{g4}
0\lra H^{N-2}_{\hat{G}}( \CC^N)\lra H^N_{\hat{G}}( \CC^N\times\CC,\mathbf{w}^\infty)\lra H^N_{\hat{G}}( \CC^N\times\CC^*,\mathbf{w}^\infty)\lra  0.
\eeq
On the other hand, by Gysin sequence, we have an isomorphism
\beq\label{g5}
H^N_{\hat{G}}( \CC^N\times\CC,\mathbf{w}^\infty)\cong H^N_{\hat{G}}( (\CC^N-0)\times\CC,\mathbf{w}^\infty)
\cong H^N(\cL(\chi^{-1}), w^\infty)^G\eeq
where $\cL(\chi^{-1})=(\CC^N-0)\times\CC/\CC^*$ is the line bundle over the weighted projective space
$\PP^{N-1}_{d_1,\cdots,d_N}$ defined by the weight $\chi^{-1}$. It is straightforward to check that the critical locus in $\cL(\chi^{-1})$ is precisely $\PP w^{-1}(0)=Q_w$, all of whose points are nondegenerate, i.e. there is an analytic local coordinate system at each point such that $w=x^2+y^2$ where $x,y$ are the normal coordinates of $Q_w$ in $\cL(\chi^{-1})$. Hence we have
\beq\label{g6}
H^N(\cL(\chi^{-1}), w^\infty)^G\cong H^{N-2}(Q_w)^G.\eeq
Combining \eqref{g4}, \eqref{g5} and \eqref{g6}, we find that 
\beq\label{g7}
H^N_{\hat{G}}( \CC^N\times\CC^*,\mathbf{w}^\infty)\cong H^{N-2}_{\mathrm{prim}}(Q_w)^G.
\eeq 
The lemma follows from \eqref{g1}, \eqref{g2} and \eqref{g7}.
\end{proof}

Consequently we have 
\beq\label{d10}
\cH_\gamma=H^{N_\gamma}(\CC^{N_\gamma},w_\gamma^\infty)^G \cong H^{N_\gamma-2}_{\mathrm{prim}}(Q_{w_\gamma})^G\cong \ih_{N_\gamma}(w_\gamma^{-1}(0))^G
\eeq
by \eqref{d11} and 
\beq\label{d12}
\cH=\bigoplus_{\gamma\in G}\cH_\gamma = \bigoplus_{\gamma\in G}\ih_{N_\gamma}(w_\gamma^{-1}(0))^G.\eeq

Next we define a nondegenerate pairing on $\cH$. 
Since the restriction $w_\gamma$ of $w$ is also nondegenerate \cite[Lemma 2.1.10]{FJR}, $Q_{w_\gamma}$
is a nonsingular projective variety and we have the perfect intersection pairing on $H^{N_\gamma-2}_{\mathrm{prim}}(Q_{w_\gamma})$  which induces perfect pairings on $\ih_{N_\gamma}(w_\gamma^{-1}(0))$ and $\cH_\gamma=\ih_{N_\gamma}(w_\gamma^{-1}(0))^G$ as $G$ is an automorphism group of $Q_{w_\gamma}$.  
Since $\CC^{N_{\gamma}}=\CC^{N_{\gamma^{-1}}}$ and $w_\gamma=w_{\gamma^{-1}}$, we have 
$\cH_\gamma=\cH_{\gamma^{-1}}$. Using this identification and the perfect pairing on each $\cH_\gamma$, we have a perfect pairing
\beq\label{d13}
\langle \cdot, \cdot \rangle:\cH_\gamma\otimes \cH_{\gamma^{-1}}\lra \CC.
\eeq
Summig them up for $\gamma\in G$, we obtain a perfect pairing $\langle,\rangle$ on the state space $\cH$.

\begin{exam}\label{d14}
Let $w:\CC^5\to\CC$ be $w(x_1,\cdots,x_5)=\sum_ix_i^5$ with $q_1=\cdots=q_5=\frac15$. Let $G=\mu_5$ be the cyclic group of order 5 generated by $e^{\frac{2\pi i}{5}}$ acting diagonally on $\CC^5$. 
Let $\chi:\CC^*\to \CC^*, t\mapsto t^5$ so that $\chi^{-1}(1)=G$. Then $Q_w$ is the quintic Fermat Calabi-Yau 3-fold. For $\gamma\ne 1$, $N_\gamma=0$ and $\cH_\gamma=\CC$. For $\gamma=1$, we have $\cH_\gamma=H^3(Q_w)=\ih_5(w^{-1}(0))$ as $G$ acts trivially on the cohomology. Hence the state space is the orthogonal sum
$$\cH=\CC^4_{e_1,\cdots, e_4}\oplus \ih_5(w^{-1}(0))\cong H^{even}(Q_w)\oplus H^3(Q_w)=H^*(Q_w)$$
and the perfect pairing on $\cH$ is the usual intersection pairing on $H^*(Q_w).$ 
\end{exam}

Consider the Thom-Sebastiani sum $$w_\gamma\boxplus w_{\gamma'}: \CC^{N_\gamma}\oplus \CC^{N_{\gamma'}}\to \CC,\quad (w_\gamma\boxplus w_{\gamma'})(x,y)=w_\gamma(x)+w_{\gamma'}(y).$$ 
By \cite{Massey}, we have an isomorphism of vanishing cohomology 
\beq\label{d17} H^{N_\gamma-2}_{\mathrm{van}}(Q_{w_\gamma})\otimes H^{N_{\gamma'}-2}_{\mathrm{van}}(Q_{w_{\gamma'}})\cong H^{N_\gamma+N_{\gamma'}-2}_{\mathrm{van}}(Q_{w_\gamma\boxplus w_{\gamma'}})
\eeq
whose Poincar\'e dual sends $(\xi,\eta)\in H^{N_\gamma-2}_{\mathrm{van}}(Q_{w_\gamma})\otimes H^{N_{\gamma'}-2}_{\mathrm{van}}(Q_{w_{\gamma'}})$ to the join $\xi *\eta\in H^{N_\gamma+N_{\gamma'}-2}_{\mathrm{van}}(Q_{w_\gamma\boxplus w_{\gamma'}})$, i.e. the union of lines joining a point in $\xi$ with a point in $\eta$. 
By \eqref{d17}, we have an isomorphism 
\beq\label{d18} \ih_{N_\gamma}(w_\gamma^{-1}(0))\otimes \ih_{N_{\gamma'}}(w_{\gamma'}^{-1}(0))\cong \ih_{N_\gamma+N_{\gamma'}}((w_\gamma\boxplus w_{\gamma'})^{-1}(0)). \eeq
If we apply \eqref{b21}, \eqref{d18} gives us \eqref{b17}. 
We therefore have
\beq\label{d19} \cH_\gamma \otimes \cH_{\gamma'} \cong  \ih_{N_\gamma}(w_\gamma^{-1}(0))^G\otimes \ih_{N_{\gamma'}}(w_{\gamma'}^{-1}(0))^G\cong \ih_{N_\gamma+N_{\gamma'}}((w_\gamma\boxplus w_{\gamma'})^{-1}(0))^{G\times G}.\eeq

Recall from \eqref{d31} that if $\ulambda\in \hat{G}$ belongs to the coset $\chi^{-1}(-1)$, $w_\gamma(\ulambda\cdot x)=-w_\gamma(x)$ and vice versa. For $\ulambda\in \chi^{-1}(-1)$, we have an embedding
\beq\label{d15} \CC^{N_\gamma}\hookrightarrow (w_\gamma\boxplus w_\gamma)^{-1}(0),\quad x\mapsto (x,\ulambda\cdot x)\eeq
whose image is denoted by $\Delta_\ulambda$. As the origin is the only singular point, we find that $\Delta_\ulambda$ is an allowable geometric chain for the middle perversity and defines a class 
\beq\label{d16}
[\Delta_\ulambda]\in \ih_{2N_\gamma}((w_\gamma\boxplus w_\gamma)^{-1}(0))\cong \ih_{N_\gamma}(w_\gamma^{-1}(0))\otimes \ih_{N_\gamma}(w_\gamma^{-1}(0)). \eeq
Since $\Delta=\bigcup_{\ulambda\in \chi^{-1}(-1)} \Delta_\ulambda$ is obviously $G$-invariant, its  homology class lies in the $G\times G$-invariant part, i.e. 
\small
$$[\Delta]=\sum_{\ulambda\in \chi^{-1}(-1)} [\Delta_\ulambda]\ \in \ \ih_{2N_\gamma}((w_\gamma\boxplus w_\gamma)^{-1}(0))^{G\times G}\cong \ih_{N_\gamma}(w_\gamma^{-1}(0))^G\otimes \ih_{N_\gamma}(w_\gamma^{-1}(0))^G.$$
\normalsize
Choosing a basis $\{e_i\}$ of $\cH_\gamma \cong  \ih_{N_\gamma}(w_\gamma^{-1}(0))^G$, we can write 
$$[\Delta]=\sum_{i,j}a_{ij}e_i\otimes e_j\in \ih_{N_\gamma}(w_\gamma^{-1}(0))^G\otimes \ih_{N_\gamma}(w_\gamma^{-1}(0))^G.$$
Since $e_l$ are $G$-invariant, it is now straightforward to check that the intersection pairing $[\Delta]\cdot (e_k\otimes e_l)$ is $|G|\,c_{kl}$ where $c_{kl}=\langle e_k,e_l\rangle$. From this, by linear algebra, we obtain that
\beq\label{d21} [\Delta]=|G|\,\sum_{i,j} c^{ij}e_i\otimes e_j\eeq
where $(c^{ij})$ is the inverse matrix of $(c_{ij})$. 


\subsection{Spin curves}\label{S4.5}
In this subsection, given the input data $w/G$, we associate the moduli space of $G$-spin curves.  

A \emph{twisted curve} (orbicurve) with markings refers to a proper \DM stack $C$ with markings $p_1,\cdots, p_n$ in $C$, such that 
\begin{enumerate}
\item if we let $\rho:C\to |C|$ denote the coarse moduli space, $|C|$ is a projective curve with at worst nodal singularities and $\rho(p_i)$ are smooth points in $|C|$;
\item $\rho$ is an isomorphism away from nodes or maked points;
\item each marking is locally $\CC/\mu_{l}$ for some $l>0$ where $\mu_l=\{z\in \CC\,|\, z^l=1\}$ acts by multiplication;
\item each node is locally $\{xy=0\}/\mu_l$ for some $l>0$ where the action of $z\in \mu_l$ is $(x,y)\mapsto (zx,z^{-1}y)$.
\end{enumerate}
We let $\omega_C\ulog=\rho^*\omega_{|C|}\ulog=\rho^*\omega_{|C|}(|p_1|+\cdots+|p_n|)$ where $|p_j|=\rho(p_j)$. For each marking and node, we fix a generator of the cyclic stabilizer group of the point. 

A \emph{$G$-spin curve} is a twisted curve $C$ with markings $p_1,\cdots, p_n$, together with a principal $\hat{G}$-bundle $P$ on $C$ and an isomorphism
\[ \varphi:\chi_*P\cong P(\omega_C\ulog)\] 
of principal $\CC^*$-bundles where $P(\omega_C\ulog)$ denotes the principal  $\CC^*$-bundle of the line bundle $\omega_C\ulog$ and $\chi_*P$ denotes the principal $\CC^*$ bundle obtained by applying $\chi$ to the fibers of $P$. 
Moreover, the inclusion map $\hat{G}\to (\CC^*)^N$ and $P$ induce the principal $(\CC^*)^N$-bundle $P\times_{\hat{G}}(\CC^*)^N$ which gives us an $N$-tuple of line bundles $(L_1,\cdots, L_N)$. 
For each marking $p_j$ of a twisted curve $C$,  the fixed generator of the stabilizer group acts on the fiber $L_i|_{p_j}$ of $L_i$ over $p_j$ by multiplication by a constant $\gamma_{ij}$. 
Let 
\beq\label{d28} \gamma_j=(\gamma_{ij})_{1\le i\le N}\in G, \quad \ugamma=(\gamma_1,\cdots,\gamma_n)\in G^n.\eeq
Likewise, the fixed generator of the stabilizer group of a node $p$ acts on the fibers of $L_i$ and gives us an element $\gamma_p$ of $G$. We call $\ugamma$ the \emph{type} of the spin curve. 
  
The spin curve $(C,p_j, L_i,\varphi)$ is called \emph{stable} if $(|C|,|p_1|,\cdots, |p_n|)$ is a stable curve and the homomorphism from the stabilizer group of a marking $p_j$ (resp. a node $p$) into $G$ that sends the generator to $\gamma_j$ (resp. $\gamma_p$) is injective.

\begin{theo}\label{d29}\cite[Theorem 2.2.6]{FJR} \cite[Proposition 3.2.6]{PV}
The stack $S_{g,n}$ of stable $G$-spin curves is a smooth proper \DM stack with projective coarse moduli.
The morphism $S_{g,n}\to \overline{M}_{g,n}$ sending a spin curve $(C,p_j,L_i,\varphi)$ to $(|C|, |p_j|)$ is flat proper and quasi-finite.  
\end{theo}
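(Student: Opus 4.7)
The plan is to realize $S_{g,n}$ as a moduli stack fibered over the Abramovich--Vistoli stack of stable twisted curves, so that the statement follows from standard facts about twisted curves combined with the finite-group nature of $G$-spin structures. Let $\mathfrak{M}^{\mathrm{tw}}_{g,n,\ugamma}$ denote the moduli stack of stable $n$-pointed twisted curves of genus $g$ whose stabilizer at each marking $p_j$ has its fixed generator acting with character prescribed by $\gamma_j$, with compatible balanced conditions at the nodes. This stack is known to be a smooth proper \DM stack, and the forgetful morphism $\mathfrak{M}^{\mathrm{tw}}_{g,n,\ugamma}\to\rmgn$ is flat, proper and quasi-finite.

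First I would identify a $G$-spin structure with concrete algebraic data. Via the embedding $\hat{G}\hookrightarrow (\CC^*)^N$, a principal $\hat{G}$-bundle $P$ on $C$ together with an isomorphism $\chi_*P\cong P(\omega_C\ulog)$ is equivalent to an $N$-tuple $(L_1,\ldots,L_N)$ of line bundles equipped with an isomorphism
\[ L_1^{\otimes d_1}\otimes\cdots\otimes L_N^{\otimes d_N}\cong \omega_C\ulog, \]
compatible with the local monodromies dictated by $\ugamma$. Over a fixed twisted curve, the set of such tuples is either empty or a torsor under a finite group, namely the kernel of the tensorial condition on $\mathrm{Pic}(C)$, which is finite because $\hat{G}/\CC^*=G$ is finite and $C$ is one-dimensional. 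Consequently the forgetful morphism $S_{g,n,\ugamma}\to \mathfrak{M}^{\mathrm{tw}}_{g,n,\ugamma}$ is representable, \'etale and finite. Smoothness and the \DM property of $S_{g,n,\ugamma}$ follow immediately from the corresponding properties of $\mathfrak{M}^{\mathrm{tw}}_{g,n,\ugamma}$, while stability ensures that automorphism groups are finite and reduced.

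For properness I would apply the valuative criterion: given a family of $G$-spin curves over the generic point of a DVR $R$, the underlying pointed stable curve extends uniquely, its twisted refinement extends after a finite base change by Abramovich--Vistoli, and the tuple $(L_i,\varphi)$ then extends uniquely by separatedness of the relative Picard scheme together with the closedness of the tensorial condition. Flatness and quasi-finiteness of $S_{g,n}\to \rmgn$ are inherited from the two-step factorization $S_{g,n,\ugamma}\to \mathfrak{M}^{\mathrm{tw}}_{g,n,\ugamma}\to \rmgn$, and projectivity of the coarse moduli follows because this composite is finite onto the projective scheme $\rmgn$. Finally, $S_{g,n}$ is the finite disjoint union of the open-and-closed substacks $S_{g,n,\ugamma}$ over admissible types $\ugamma\in G^n$.

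The main obstacle I anticipate is the careful bookkeeping of the local monodromies at nodes. Writing a node locally as $[\{xy=0\}/\mu_l]$, the characters by which $\mu_l$ acts on the fibers of the $L_i$ on the two branches must be mutually inverse, and their weighted tensor combination must match the $\mu_l$-action on $\omega_C\ulog$. Enforcing these balanced conditions uniformly in families is what pins down the discrete type $\ugamma$ and what ensures that only finitely many twisted refinements of a given stable curve can support a $G$-spin structure of a given type. Handling this compatibility cleanly, without losing track of it in the translation between $\hat{G}$-bundles and tuples of line bundles, is the technical core of the proof.
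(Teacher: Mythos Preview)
The paper does not prove this theorem; it is stated with citations to \cite[Theorem 2.2.6]{FJR} and \cite[Proposition 3.2.6]{PV} and no proof is given. So there is no ``paper's own proof'' to compare against.

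That said, your outline follows the standard route taken in those references: factor through the Abramovich--Vistoli stack of stable twisted curves, observe that the residual datum is a torsor under a finite group, and inherit smoothness, properness, and the \DM property from the base. This is correct in spirit.

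Two small corrections. First, the exact sequence is $1\to G\to \hat G\mapright{\chi}\CC^*\to 1$, so $\hat G/G\cong\CC^*$, not $\hat G/\CC^*\cong G$. The point you want is that once $\chi_*P\cong P(\omega_C\ulog)$ is fixed, the remaining choices of $\hat G$-bundle form a torsor under $H^1(C,G)$, which is finite because $G$ is finite and $C$ is a curve. Second, your displayed relation $L_1^{\otimes d_1}\otimes\cdots\otimes L_N^{\otimes d_N}\cong\omega_C\ulog$ is not the right translation of $\chi_*P\cong P(\omega_C\ulog)$: the exponents $d_i$ are the weights defining the one-parameter subgroup $T\hookrightarrow\hat G$, not the exponents of a character on $(\CC^*)^N$ extending $\chi$. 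In general $\chi$ need not extend to a monomial character on the ambient torus, and the correct constraint on $(L_1,\ldots,L_N)$ is that the associated $(\CC^*)^N$-bundle admit a reduction of structure group to $\hat G$ together with the isomorphism $\chi_*P\cong P(\omega_C\ulog)$. The finiteness argument survives once phrased correctly via the $G$-torsor description.
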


A \emph{rigidification} of a spin curve at a marking $p_j$ is an isomorphism 
$$\psi_j:L_1|_{p_j}\oplus \cdots \oplus L_N|_{p_j}\mapright{\cong} \CC^N/\langle \gamma_j\rangle,$$
where $\langle \gamma_j\rangle \le G$ is the subgroup generated by $\gamma_j$, 
such that 
$$w_k\circ \psi_j=\mathrm{res}_{p_j}\circ \varphi_k|_{p_j}$$ for each monomial $w_k$ of $w$ where $\varphi_k:\oplus_iL_i\to \omega_C\ulog$ is the map defined by $w_k$ and $\varphi$. 
Two different rigidifications $\psi=(\psi_j)$ and $\psi'=(\psi_j')$ are related by the action of $\prod_{j=1}^n G/\langle \gamma_j\rangle.$ 
Hence the moduli stack $S_{g,n}\urig$ of stable $G$-spin curves with rigidification is a proper smooth \DM stack which is an \'etale cover over $S_{g,n}$. 
If we denote the moduli stack of spin curves of type $\ugamma$ with rigidification by $S\urig_{g,\ugamma}$, we have the disjoint union $$S\urig_{g,n}=\sqcup_\ugamma S\urig_{g,\ugamma}.$$


From now on, let $S=S\urig_{g,\ugamma}.$ 
Let $\pi:\cC\to S$ denote the universal curve and $\cL_i$ be the universal line bundle. 
By \cite[\S4.2]{PV}, 
there is a resolution 
\beq\label{d34} R\pi_*(\oplus_{i=1}^N \cL_i)\cong [M\mapright{\alpha} F]\eeq
by locally free sheaves $M$ and $F$ over $S$ constructed as follows: 
We fix an injective homomorphism $\oplus_i\cL_i\to P$ of locally free sheaves with $R^1\pi_*P=0$
such that $P/(\oplus_i\cL_i)$ is locally free; 
we let $\Sigma$ be the union of markings, and form the obvious injection 
$\oplus_i\cL_i\to P\oplus(\oplus _i\cL_i)|_{\Sigma}=P'$. 
Let $Q=P'/(\oplus_i\cL_i)$. Then 
the two-term complex $[M\to F]$ in \eqref{d34} is defined by the fiber product
\[\xymatrix{
M\ar[r]\ar[d] & F\ar[d]\\
\pi_*P'\ar[r] & \pi_*Q
}\]
for a surjective homomorphism $F\to \pi_*Q$ from a sufficiently negative vector bundle $F$. The last surjection was added 
as it is necessary for the construction of a cosection (cf. \cite[\S4.2]{PV}).

The projection $P'\to (\oplus _i\cL_i)|_{\Sigma}$ together with rigidification isomorphisms give us a smooth morphism
$$\bq_M:M\lra \pi_*P'\lra \pi_*\bl (\oplus _{i}\cL_i)|_{\Sigma}\br \lra B=\prod_{j=1}^n\CC^{N_{\gamma_j}}$$ 
where $\CC^{N_{\gamma_j}}$ denotes the $\gamma_j$-fixed subspace of $\CC^N$.
By construction, $\bq_M$ is smooth on each fiber over $S$.

The restriction $w_{\gamma_j}$ of $w$ to $\CC^{N_{\gamma_j}}$ gives us 
$${w}_{\ugamma} =w_{\gamma_1}\boxplus \cdots \boxplus w_{\gamma_n}.$$
Let $E_M=\bp_M^*F$ and $s_M$ be the section of $E_M$ defined by $\alpha$ where $\bp_M:M\to S$ is the bundle projection. 
Let $$X\urig_{g,n}=\sqcup_\ugamma X\urig_{g,\ugamma}$$ be the moduli space of spin curves $(C,p_j, L_i,\varphi)$ with rigidification $\psi$ and sections $(x_1,\cdots,x_N)\in \oplus_i H^0(L_i)$ of the line bundles $L_i$. Then $X\urig_{g,\ugamma}=\zero(s_M)$ by construction. 
Moreover, by \cite[\S4.2]{PV}, there is a homomorphism 
$$\sigma_M:E_M\to \sO_M$$ satisfying
\eqref{a2}. 
By the construction of $\sigma_M$ in \cite[\S4.2]{PV}, we have \eqref{a2}. 
 
In summary, with $S=S\urig_{g,\ugamma}$, $X=X\urig_{g,\ugamma}$, $B_\ugamma=\prod_{j=1}^n\CC^{N_{\gamma_j}}$ and $\underline{w}={w}_\ugamma$, all the assumptions in \S\ref{S4.1} are satisfied, and hence from \eqref{d5} we have the cosection localized Gysin map 
\beq\label{d35} \bigotimes_j\cH_{\gamma_j}\cong \ih_{\sum_j N_{\gamma_j}}({w}_\ugamma ^{-1}(0))^{G^n}\mapright{\Phi} H_{2\mathrm{vd}(X)-\sum N_{\gamma_j}}(S_{g,\ugamma}\urig)\eeq
where 
$$\mathrm{vd}(X)=3g-3+n+\sum_i\chi(L_i)$$ 
is the virtual dimension of $X$ and the isomorphism in \eqref{d35} is the Thom--Sebastiani isomorphism. 
Note that the dimension $2\mathrm{vd}(X)-\sum N_{\gamma_j}$ matches the degree calculation in \cite[Theorem 4.1.1]{FJR}.

\subsection{Cohomological field theory}\label{S4.3}
Let $\cH$ be defined by \eqref{d12} equipped with perfect pairing \eqref{d13}. 
A \emph{cohomological field theory} with a unit for the state space $\cH$ consists of homomorphisms
\beq\label{d36} \Omega_{g,n}:\cH^{\otimes n}\lra H^*(\rmgn)\eeq
in the stable range $2g-2+n>0$ satisfying the following axioms:
\begin{enumerate}
\item $\Omega_{g,n}$ is $S_n$-equivariant where the symmetric group $S_n$ acts on $\rmgn$ by permuting the markings and on $\cH^{\otimes n}$ by permuting the factors; 
\item let $u:\overline{M}_{g-1,n+2}\to \overline{M}_{g,n}$ be the gluing of the last two markings, then for a basis $\{e_k\}$ of $\cH$ and $c_{kl}=\langle e_k,e_l\rangle$ with $(c^{kl})=(c_{kl})^{-1}$, we have
\beq\label{d37} u^*\Omega_{g,n}(v_1,\cdots,v_n)=\sum_{k,l} c^{kl}\Omega_{g-1,n+2}(v_1,\cdots,v_n, e_k,e_l),\quad v_i\in \cH\eeq 
where $u^*:H^*(\overline{M}_{g,n})\to H^*(\overline{M}_{g-1,n+2})$ is the pullback by $u$;
\item let $u:\overline{M}_{g_1,n_1+1}\times \overline{M}_{g_2,n_2+1}\to \overline{M}_{g,n}$ with $g=g_1+g_2$ and $n=n_1+n_2$ be the gluing of the last markings, then for $v_i\in \cH$,
\beq\label{d38} u^*\Omega_{g,n}(v_1,\cdots,v_n)=\sum_{k,l} c^{kl}\Omega_{g_1,n_1+1}(v_1,\cdots,v_{n_1}, e_k)\otimes\qquad\qquad  \eeq
$$\qquad\qquad\qquad\qquad\qquad\qquad\qquad\qquad\otimes \Omega_{g_2,n_2+1}(v_{n_1+1},\cdots,v_{n}, e_l)$$
in $H^*(\overline{M}_{g_1,n_1+1})\otimes H^*(\overline{M}_{g_2,n_2+1})$;
\item let $\theta:\overline{M}_{g,n+1}\to \rmgn$ be the morphism forgetting the last marking, then we have 
\beq\label{d39} \Omega_{g,n+1}(v_1,\cdots,v_n,\mathbf{1})=\theta^*\Omega_{g,n}(v_1,\cdots,v_n),\quad \forall v_i\in \cH ;\eeq
\item $\Omega_{0,3}(v_1,v_2,\mathbf{1})=\langle v_1,v_2\rangle$ for $v_i\in \cH$. 
\end{enumerate}

We have a forgetful morphism 
$$\mathrm{st}:S=S_{g,\ugamma}\urig\lra \rmgn$$
whose pushforward is denoted by $\mathrm{st}_*:H_*(S)\to H_*(\rmgn)\cong H^*(\rmgn)$.
Composing \eqref{d35} with 
$$\frac{(-1)^D}{\deg \, \mathrm{st}} \mathrm{st}_*:H_*(S)\lra H_*(\overline{M}_{g,n})$$ 
with  $D=-\sum_i\chi(L_i)$, we obtain
\beq\label{a18} \Psi_\ugamma:\bigotimes_{1\le j\le n}\cH_{\gamma_j}\mapright{\Phi} H_*(S_{g,\ugamma}\urig)\lra H_*(\overline{M}_{g,n})\cong H^*(\overline{M}_{g,n}).\eeq
Since $\cH=\bigoplus_{\gamma\in G}\cH_\gamma$, by taking the sum of $\Psi_\ugamma$ over $\ugamma\in G^n$, we obtain 
\beq\label{d40}
\Omega_{g,n}=\sum_\ugamma\Psi_\ugamma:\cH^{\otimes n}\lra H^*(\rmgn).\eeq

\begin{theo}\label{a19} The homomorphisms $\{\Omega_{g,n}\}_{2g-2+n>0}$ in \eqref{d40} define a cohomological field theory with a unit for the state space $\cH$.
Moreover this cohomological field theory coincides with that in \cite[Theorem 4.2.2]{FJR} when we restrict $\cH$ to the narrow sector  $\bigoplus_{\ugamma: N_{\gamma_j}=0,\, \forall j}\bigotimes_j\cH_{\gamma_j}$. 
\end{theo}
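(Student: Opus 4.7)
The plan is to verify the five axioms of a cohomological field theory one at a time, using the functoriality results for the cosection localized Gysin map established in Section 3 together with the explicit computation \eqref{d21} of the Thom-Sebastiani diagonal class. The comparison with the FJRW theory in the narrow sector then follows from the main result of \cite{CLL}.

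Axioms (1), (4) and (5) are comparatively light. Axiom (1) is immediate, since the Thom-Sebastiani isomorphism \eqref{d19}, the map $\Phi$, and the pushforward $\mathrm{st}_*$ are all natural under permutations of the markings. For the unit axiom (4), the forgetful morphism $\theta:\overline{M}_{g,n+1}\to\overline{M}_{g,n}$ lifts to a morphism $\tilde\theta:S_{g,(\ugamma,J_w)}\urig\to S_{g,\ugamma}\urig$ under which the two resolutions $[M\to F]$ and $[M'\to F']$ of $R\pi_*(\oplus\cL_i)$ differ only by a split summand supported on the added marking of type $J_w$ (where $N_{J_w}=0$); Propositions \ref{e34}--\ref{e35}, combined with placidity of $\tilde\theta$ and Corollary \ref{b90}, yield the desired equality once the $(-1)^D$ and $\deg\,\mathrm{st}$ factors are tracked. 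Axiom (5) reduces to a direct computation on the point $\overline{M}_{0,3}$: with $\gamma_3=J_w$, the spin structure forces $\gamma_2=\gamma_1^{-1}$, and unwinding the definitions identifies $\Omega_{0,3}(v_1,v_2,\mathbf{1})$ with the pairing \eqref{d13}.

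The core of the theorem is the pair of factorization axioms (2) and (3), which I treat uniformly. For the gluing map $u$ under consideration, base change along $u$ produces a commutative diagram whose upper row is a morphism $\tilde u:\widetilde{S}\urig\to S_{g,\ugamma}\urig$, where $\widetilde{S}\urig$ parametrizes spin curves on the partially normalized source and decomposes as a disjoint union indexed by the monodromy $\gamma\in G$ at the new node. Pulling back $[M\to F]$ to $\widetilde{S}\urig$ and comparing it with the natural resolution on the normalized side via Propositions \ref{e34}--\ref{e35}, the only new ingredient is a Thom-Sebastiani factor $(w_\gamma\boxplus w_\gamma)^{-1}(0)\subset\CC^{N_\gamma}\oplus\CC^{N_{\gamma^{-1}}}$ coming from the fiber of $\bq_M$ at the glued pair of markings. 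The key geometric observation is that the image of $\bq_{\widetilde{M}}$ in this factor is precisely the diagonal $\Delta=\bigcup_{\ulambda\in\chi^{-1}(-1)}\Delta_\ulambda$, whose middle-perversity intersection homology class equals $|G|\sum_{k,l}c^{kl}\,e_k\otimes e_l$ by \eqref{d21}. Combining this with Propositions \ref{c9} and \ref{c17} and Corollary \ref{b90}, pushing forward by $\mathrm{st}$, and summing over $\gamma\in G$, the right-hand sides of \eqref{d37} and \eqref{d38} emerge after the factors $|G|$, $\deg\,\mathrm{st}$ and $(-1)^D$ are reconciled.

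The step I expect to be the main obstacle is exactly this last bookkeeping in axioms (2) and (3): matching the sum over $\ulambda\in\chi^{-1}(-1)$ produced by the diagonal with the combinatorics of rigidifications at the new node, and verifying that the degrees of $\mathrm{st}$ and the $(-1)^D$ normalizations behave correctly under the passage from $\Omega_{g,n}$ to $\Omega_{g-1,n+2}$ and to $\Omega_{g_1,n_1+1}\otimes\Omega_{g_2,n_2+1}$. Once axioms (1)--(5) are secured, the narrow-sector comparison with \cite{FJR} is immediate via \cite{CLL}: when all $\gamma_j$ are narrow, $N_{\gamma_j}=0$ for every $j$, so $B$ and $Z$ are points, $Y=S_{g,\ugamma}\urig$ is smooth, and by Remark \ref{c6} (equation \eqref{c8}) our $\Omega_{g,n}$ coincides with the pushforward of the cosection localized virtual class $[X]\virtloc$, which is precisely the narrow FJRW invariant as established in \cite{CLL}.
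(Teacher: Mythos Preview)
Your proposal is correct and follows essentially the same approach as the paper: the light axioms (1), (4), (5) are handled as you describe, the splitting axioms (2) and (3) are reduced via the functoriality results of \S\ref{S3} (Propositions \ref{c9}, \ref{b60}, \ref{c17}, \ref{e34}, \ref{e35}) to the identification of the Thom--Sebastiani diagonal class \eqref{d21}, and the narrow-sector comparison goes through \eqref{c8} and \cite{CLL}. One small correction: in the narrow case $B=Z=\{\mathrm{pt}\}$ gives $Y=M$ (the total space of the bundle over $S\urig_{g,\ugamma}$), not $Y=S\urig_{g,\ugamma}$ itself; this does not affect your argument since $M$ is smooth and $s^!_\sigma[M]=h_S[X]\virtloc$ as you need.
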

\begin{proof} We will check the axioms for \eqref{d36} above. 
Indeed, (1) follows from the construction directly, and (5) is obvious. We will prove the remainder.

For (2), from \S\ref{S4.5}, we have a diagram
\beq\label{e1}\xymatrix{
S\urig_{g,\ugamma}\ar[d]_{\mathrm{st}} & Y=Y_{g,\ugamma}\ar@{^(->}[r]\ar[d]^{\bq_Y} \ar[l]_{\bp_Y} & M\ar[d]^{\bq_M}\\
\rmgn & Z_\ugamma=w_\ugamma^{-1}(0)\ar@{^(->}[r] & B_\ugamma=\prod_{j=1}^n\CC^{N_{\gamma_j}}
}\eeq
The gluing of the last two nodes of curves in $\overline{M}_{g-1,n+2}$ gives us the embedding 
$$u:\overline{M}_{g-1,n+2}\lra \rmgn.$$ 
Consider the commutative diagram
\small
\beq\label{e2}\xymatrix{
\sqcup_{\lambda\in G}Z_{(\ugamma,\lambda,\lambda^{-1})}\ar@{}[dr]^{\Box} & \hat{Z}_\ugamma \ar[l]\ar[r] & Z_\ugamma\ar@{=}[r] & Z_\ugamma\\
\sqcup_{\lambda\in G}X_{g-1,(\ugamma,\lambda,\lambda^{-1})}\ar[u]_{\tilde{\bq}_X}\ar[d] & \hat{X}\ar@{}[dr]^{\Box}\ar[l]_{\tau_X}\ar[r]^{\kappa_X}\ar[u]_{\hat{\bq}_X} \ar[d] & X'\ar@{}[dr]^{\Box}\ar[r]^{u_X} \ar[u]_{\bq'_X}\ar[d] & X_{g,\ugamma}\ar[d]^{\bp_X}\ar[u]_{\bq_X}\\
\sqcup_{\lambda\in G}S\urig_{g-1,(\ugamma,\lambda,\lambda^{-1})}\ar[d]_{\tilde{\mathrm{st}}} & \hat{S}\ar[l]_\tau\ar[r]^\kappa \ar[d]^{\hat{\mathrm{st}}} & S'\ar@{}[dr]^{\Box}\ar[r]\ar[d]^{\mathrm{st}'} & S\urig_{g,\ugamma}\ar[d]^{\mathrm{st}}\\
\overline{M}_{g-1,n+2}\ar@{=}[r]&\overline{M}_{g-1,n+2}\ar@{=}[r]&\overline{M}_{g-1,n+2}\ar[r]^u & \rmgn.
}\eeq
\normalsize
Here $\hat{S}= \sqcup_{\lambda\in G}S\urig_{g-1,(\ugamma,\lambda,\lambda^{-1})}\times \chi^{-1}(-1)$, $\hat{Z}_\ugamma=(\sqcup_{\lambda\in G} Z_\ugamma\times \CC^{N_\lambda})\times \chi^{-1}(-1)$  and $\tau$ is the projection. An object in $S\urig_{g-1,(\ugamma,\lambda,\lambda^{-1})}$ together with an element $\varepsilon\in \chi^{-1}(-1)$ gives us an object in $S'$ by gluing $\oplus_i L_i$ at $p_{n+1}$ and $p_{n+2}$ by the isomorphism 
\beq\label{e25} \oplus_i L_i|_{p_{n+1}}\mapright{\psi_{n+1}} \CC^N \mapright{\varepsilon}\CC^N\mapright{\psi_{n+2}^{-1}} \oplus_i L_i|_{p_{n+2}}.\eeq
We thus have a morphism $\kappa$ above which is \'etale because the fibers parameterize rigidifications at the node. 
The first and last columns come from \eqref{e1} while the left horizontal arrow in the top row is from \eqref{d15}. The middle horizontal arrow in the top row is just the projection. 
The squares with $\Box$ inside are fiber products.  
Gluing a section of $\oplus_i L_i$ by \eqref{e25} at $p_{n+1}$ and $p_{n+2}$ amounts to checking that its image by the upper left vertical arrow $\tilde{\bq}_X$ lies in the image of the top left horizontal arrow in \eqref{e2}, 
since sections of $L_i$ vanish at narrow markings. Hence the fiber product in the top left corner and that in the center give us the same $\hat{X}$.

For cosection localized Gysin maps, we need a bigger diagram involving $Y$'s instead of $X$'s in the second row of \eqref{e2}. To simplify the notation, let 
$$\tilde{Z}=\sqcup_{\lambda\in G}Z_{(\ugamma,\lambda,\lambda^{-1})},\quad
\tilde{S}=\sqcup_{\lambda\in G}S\urig_{g-1,(\ugamma,\lambda,\lambda^{-1})},\quad 
\tilde{X}=\sqcup_{\lambda\in G}X_{g-1,(\ugamma,\lambda,\lambda^{-1})},$$
$$\Delta= (\sqcup_{\lambda\in G} Z_\ugamma\times \CC^{N_\lambda})\times \chi^{-1}(-1),\quad
S=S\urig_{g,\ugamma},\quad
X=X_{g,\ugamma},\quad Z=Z_\ugamma.$$

Let $\pi:\cC\to S$ be a universal curve and $\cL=\oplus_{i=1}^N\cL_i$ be the universal family of spin bundles over $\cC$. As in the construction of \eqref{d34}, we fix an embedding $\cL\to P$ into a locally free sheaf $P$ with $R^1\pi_*P=0$.
Then we have the commutative diagram
\beq\label{e26}\xymatrix{
\tilde{P}\ar@/_1pc/[dd] & \hat{P}\ar[l]\ar[r] &\hat{P}_0\ar[r] & P'\ar[r] &P\ar@/^1pc/[dd] \\
\tilde{\cL}\ar@{^(->}[u]\ar[d]\ar@{}[dr]^{\Box} & \hat{\cL}\ar[r]\ar[l]\ar@{^(->}[u]\ar[d]\ar@{}[dr]^{\Box} &\hat{\cL}_0\ar[r] \ar@{^(->}[u]\ar[d] \ar@{}[dr]^{\Box} & \cL'\ar[r]\ar@{^(->}[u]\ar[d] \ar@{}[dr]^{\Box} &\cL\ar@{^(->}[u]\ar[d] \\
\tilde{\cC}\ar[d]_{\tilde{\pi}} \ar@{}[dr]^{\Box} & \hat{\cC}\ar[r]^c\ar[l]\ar[d]^{\hat{\pi}} &\hat{\cC}_0\ar[r] \ar[d]_{\hat{\pi}_0} \ar@{}[dr]^{\Box} & \cC'\ar[r]\ar[d]^{\pi'} \ar@{}[dr]^{\Box} &\cC\ar[d]^\pi \\
\tilde{S} & \hat{S}\ar[l]_{\tau}\ar@{=}[r] &\hat{S}\ar[r]^\kappa & S'\ar[r] &S
}\eeq
where the bottom row is the third row in \eqref{e2} and the squares with $\Box$ inside are fiber products. The first row is also obtained by fiber product over the third row. The morphism $c$ glues the last two nodes $p_{n+1}$ and $p_{n+2}$. 

For the resolution \eqref{d34}, we consider the exact sequence
$$0\lra \cL \lra P\oplus \sum_{j=1}^n\cL|_{p_j}\lra P\oplus \sum_{j=1}^n\cL|_{p_j}/\cL\lra 0$$
which gives rise to resolutions of $R{\pi}_*{\cL}$ by taking fiber product:
\beq\label{e28} \xymatrix{
[M\ar@{->>}[d]\ar[r]^\alpha & F]\ar@{->>}[d]\\
[\pi_*P\oplus \pi_*\sum_{j=1}^n\cL|_{p_j} \ar[r] & \pi_*(P\oplus \sum_{j=1}^n\cL|_{p_j}/\cL )]}\eeq
after fixing a surjective homomorphism $F \to \pi_*(P\oplus \sum_{j=1}^n\cL|_{p_j}/\cL ) $  from a sufficiently negative vector bundle $F$.
We let $Y=M\times_{B}Z$ with $B=\prod_{j=1}^n\CC^{N_{\gamma_j}}\supset Z=\underline{w}^{-1}(0)$
as in \S\ref{S4.5}.

Likewise, we have resolutions of $R\tilde{\pi}_*\tilde{\cL}$ by locally free sheaves:
\beq\label{e29}
 \xymatrix{
[\tilde{M}\ar[r]^{\tilde{\alpha}}\ar@{->>}[d] &  \tilde{F}]\ar@{->>}[d]\\
[\tilde{\pi}_*\tilde{P}\oplus \tilde{\pi}_*\sum_{j=1}^{n+2}\tilde{\cL}|_{p_j}\ar[r] & \tilde{\pi}_*(\tilde{P}\oplus \sum_{j=1}^{n+2}\tilde{\cL}|_{p_j}/\tilde{\cL})]}\eeq 
for a surjection $\tilde{F}\to \tilde{\pi}_*(\tilde{P}\oplus \sum_{j=1}^{n+2}\tilde{\cL}|_{p_j}/\tilde{\cL})$ with $\tilde{F}$ sufficiently negative.
Let $\tilde{Y}=\tilde{M}\times_{\tilde{B}}\tilde{Z}$ where $\tilde{B}=B\times \CC^{N_\lambda}\times \CC^{N_{\lambda^{-1}}}$ and $$\tilde{Z}=(w_{\gamma_1}\boxplus\cdots \boxplus w_{\gamma_n}\boxplus w_{\lambda}\boxplus w_{\lambda^{-1}})^{-1}(0).$$

Pulling back 
the right vertical in \eqref{e29} to $\hat{S}$,
we have a fiber product  
\beq\label{e31} \xymatrix{
\hat{M}\ar@{->>}[d]\ar[r]^{\hat{\alpha}} & \hat{F}\ar@{->>}[d]\\
\hat{\pi}_*(\hat{P}\oplus \sum_{j=1}^n\hat{\cL}|_{p_j})\oplus (\hat{\pi}_0) _*\hat{\cL}_0|_{p_0}\ar[r]& \hat{\pi}_*(\hat{P}\oplus \sum_{j=1}^{n+2}\hat{\cL}|_{p_j}/\hat{\cL})}\eeq 
where $p_0=c(p_{n+1})=c(p_{n+2})$ denotes the glued node in $\hat{\cC}_0$.

We also have a commutative diagram of short exact sequences 
\beq\label{f50}\xymatrix{
0\ar[d] & 0\ar[d]\\ 
(\hat{\pi}_0)_*(\hat{P}_0\oplus \sum_{j=1}^n\hat{\cL}_0|_{p_j})   \ar[r]\ar[d] &   (\hat{\pi}_0)_*(\hat{P}_0\oplus \sum_{j=1}^{n}\hat{\cL}_0|_{p_j}/\hat{\cL}_0)  \ar[d]\\
\hat{\pi}_*(\hat{P}\oplus \sum_{j=1}^n\hat{\cL}|_{p_j})\oplus (\hat{\pi}_0) _*\hat{\cL}_0|_{p_0} \ar[r]\ar[d]&   \hat{\pi}_*(\hat{P}\oplus \sum_{j=1}^{n+2}\hat{\cL}|_{p_j}/\hat{\cL}) \ar[d]\\
(\hat{\pi}_0)_*(\hat{P}_0|_{p_0}\oplus \hat{\cL}_0|_{p_0})\ar[d]\ar[r]^\cong & (\hat{\pi}_0)_*(\hat{P}_0|_{p_0}\oplus \hat{\cL}_0|_{p_0})  \ar[d]\\
  0   & 0.
}\eeq

Pulling back the right vertical in \eqref{e31} by the top right vertical arrow in \eqref{f50} gives us a fiber product
\beq\label{e32}\xymatrix{
\hat{M}_0\ar@{->>}[d]\ar[r]^{\hat{\alpha}_0}& \hat{F}_0\ar@{->>}[d]\\
(\hat{\pi}_0)_*(\hat{P}_0\oplus \sum_{j=1}^n\hat{\cL}_0|_{p_j})\ar[r] & (\hat{\pi}_0)_*(\hat{P}_0\oplus \sum_{j=1}^{n}\hat{\cL}_0|_{p_j}/\hat{\cL}_0)}\eeq
which gives us two resolutions of $R(\hat{\pi}_0)_*\hat{\cL}_0$. 
Let $\hat{Y}\subset \hat{M}$ be defined by $\hat{Y}=\tilde{Y}\times_{\tilde{Z}}\Delta$.
Let $\hat{Y}_0\subset \hat{M}_0$ be the inverse image of $Z$ by the smooth morphism 
$$\hat{M}_0\lra (\hat{\pi}_0)_*(\hat{P}_0\oplus \sum_{j=1}^n\hat{\cL}_0|_{p_j})\lra (\hat{\pi}_0)_*\sum_{j=1}^n\hat{\cL}_0|_{p_j}\lra B_\ugamma.$$

Now \eqref{f50} induces a commutative diagram of exact sequences
\beq\label{e30}\xymatrix{
0\ar[r] & \hat{M}_0\ar[r]\ar[d]_{\hat{\alpha}_0} & \hat{M}\ar[r]\ar[d]^{\hat{\alpha}} & (\hat{\pi}_0)_*(\hat{P}_0|_{p_0}\oplus \hat{\cL}_0|_{p_0})\ar[r]\ar[d]^\cong & 0\\
0\ar[r] & \hat{F}_0\ar[r] & \hat{F}\ar[r] & (\hat{\pi}_0)_*(\hat{P}_0|_{p_0}\oplus \hat{\cL}_0|_{p_0})\ar[r] &0.
}\eeq



By taking the fiber product, we have the following commutative diagrams
\beq\label{e27}\xymatrix{
\tilde{Z}\ar@{}[dr]^{\Box} & \Delta\ar[l]\ar[r] & Z\\ 
\tilde{Y}\ar[u]^{{\bq}_{\tilde Y}}\ar[d]_{\bp_{\tilde{Y}}} &\hat{Y}\ar[l]_{\tau_Y}\ar[u]^{\bq_{\hat{Y}}}\ar[d] &\hat{Y}_0\ar[l]\ar[d]^{\kappa_Y}\ar[u]^{\bq_{\hat{Y}_0}}  \\
\tilde{S}\ar[d]_{\tilde{\mathrm{st}}} & \hat{S}\ar[l]_\tau\ar@{=}[r] \ar[d]^{\hat{\mathrm{st}}} & \hat{S}\ar[d]^{\hat{\mathrm{st}}}\\
\overline{M}_{g-1,n+2}\ar@{=}[r]&\overline{M}_{g-1,n+2}\ar@{=}[r]&\overline{M}_{g-1,n+2},}
\eeq
\beq\label{f52}
\xymatrix{
Z\ar@{=}[r] & Z\ar@{=}[r] & Z\\
\hat{Y}'_0\ar@{}[dr]^{\Box}\ar[d]\ar[r]^{\kappa_Y} \ar[u]_{\bq_{\hat{Y}'_0}} & Y'\ar@{}[dr]^{\Box}\ar[r]^{u_Y} \ar[u]_{\bq_{Y'}}\ar[d] & Y\ar[d]^{\bp_Y}\ar[u]_{\bq_Y}\\
\hat{S}\ar[r]^\kappa\ar[d]^{\hat{\mathrm{st}}} & S'\ar@{}[dr]^{\Box}\ar[r]\ar[d]^{\mathrm{st}'} & S\ar[d]^{\mathrm{st}}\\
\overline{M}_{g-1,n+2}\ar@{=}[r]&\overline{M}_{g-1,n+2}\ar[r]^u & \rmgn.
}\eeq
The squares with $\Box$ inside are fiber products. The bundles $E, \tilde{E}, \hat{E}, \hat{E}_0, \hat{E}'_0$ on $Y, \tilde{Y}, \hat{Y}, \hat{Y}_0, \hat{Y}'_0$ respectively, and  the sections $s, \tilde{s}, \hat{s}, \hat{s}_0,\hat{s}'_0$, as well as the cosections $\sigma, \tilde{\sigma}, \hat{\sigma}, \hat{\sigma}_0,
\hat{\sigma}'_0$ are defined by the recipe in \S\ref{S4.1} and pulling back. 

Since $\bq_Y$ and $\bq_{\tilde{Y}}$ are smooth on each fiber of $\bp_Y$ and $\bp_{\tilde{Y}}$, all the upward arrows in \eqref{e27} and \eqref{f52} are smooth. 
By Proposition \ref{e35}, $(\hat{s}_0)^!_{\hat{\sigma}_0}\circ \bq_{\hat{Y}_0}$ is independent of the choice of a resolution of 
$R(\hat{\pi}_0)_*\hat{\cL}_0$ (cf.  \S\ref{S4.4} for more detail). 
Because \eqref{e32} and the pullback of \eqref{e31} to $\hat{S}_0$ are just two resolutions of $R(\hat{\pi}_0)_*\hat{\cL}_0$, 
we have the equality 
\beq\label{f53}
(\hat{s}_0)^!_{\hat{\sigma}_0}\circ \bq_{\hat{Y}_0}^*=(\hat{s}'_0)^!_{\hat{\sigma}'_0}\circ \bq_{\hat{Y}'_0}^*.
\eeq


Now we can prove (2). 
By the construction in \S\ref{S4.5}, it is straightforward that $\bq_Y$ and $\bq_{Y'}$ are both smooth and hence $u_Y$ is placid. As $\overline{M}_{g-1,n+2}$ is a smooth divisor in $\rmgn$, $Y'$ is a Cartier divisor of $Y$.
Applying Proposition \ref{b60} and \eqref{c30} to the two columns on the right in \eqref{f52}, we have
\beq\label{e3}
u^!\Omega_{g,n}(v_1,\cdots,v_n)=u^!\frac{(-1)^D}{\deg \mathrm{st}}\mathrm{st}_*s_\sigma^!\bq_Y^*(v_1\otimes\cdots\otimes v_n)
\eeq
\[ = \frac{(-1)^D}{\deg \mathrm{st}'}\mathrm{st}'_*u^!s_\sigma^!\bq_Y^*(v_1\otimes\cdots\otimes v_n)\]
\[= \frac{(-1)^D}{\deg \mathrm{st}'}\mathrm{st}'_*{s'}_{\sigma'}^!u_Y^*\bq_Y^*(v_1\otimes\cdots\otimes v_n) \]
\[ = \frac{(-1)^D}{\deg \mathrm{st}'}\mathrm{st}'_*{s'}_{\sigma'}^!\bq_{Y'}^*(v_1\otimes\cdots\otimes v_n).\]
Since $\kappa$ is \'etale, $$\frac{(-1)^D}{\deg \hat{\mathrm{st}}}\hat{\mathrm{st}}_*\circ\kappa^!=\frac{(-1)^D}{\deg {\mathrm{st}'}}\mathrm{st}'_*.$$ 
By Proposition \ref{c17}, $\kappa^!\circ {s'}^!_{\sigma'}=(\hat{s}'_0)_{\hat{\sigma}'_0}^!\circ \kappa_Y^*$.
 By \eqref{f53}, \eqref{e3} thus equals
\beq\label{e33} 
\frac{(-1)^D}{\deg \hat{\mathrm{st}}}\hat{\mathrm{st}}_*(\hat{s}'_0)_{\hat{\sigma}'_0}^!{\bq}_{\hat{Y}'_0}^*(v_1\otimes\cdots\otimes v_n)=\frac{(-1)^D}{\deg \hat{\mathrm{st}}}\hat{\mathrm{st}}_*(\hat{s}_0)_{\hat{\sigma}_0}^!{\bq}_{\hat{Y}_0}^*(v_1\otimes\cdots\otimes v_n).
\eeq
By Proposition \ref{e34} and \eqref{e30}, we then have 
\[ (\hat{s}_0)_{\hat{\sigma}_0}^!\bq^*_{\hat{Y}_0}(v_1\otimes\cdots\otimes v_n)
={\hat{s}}_{\hat{\sigma}}^!{\bq}_{\hat{Y}}^*(\sum_{\lambda\in G}v_1\otimes\cdots\otimes v_n\otimes [\CC^{N_\lambda}]).
\]
Hence, \eqref{e33} equals 
\beq\label{e4}
\frac{(-1)^D}{\deg \hat{\mathrm{st}}}\hat{\mathrm{st}}_*{\hat{s}}_{\hat{\sigma}}^!{\bq}_{\hat{Y}}^*(\sum_{\lambda\in G}v_1\otimes\cdots\otimes v_n\otimes [\CC^{N_\lambda}]).
\eeq

Since $\tau$ is \'etale of degree $|G|$, 
$$\frac{(-1)^D}{\deg \tilde{\mathrm{st}}}\tilde{\mathrm{st}}_*\circ \frac1{|G|}\tau_*=\frac{(-1)^D}{\deg \hat{\mathrm{st}}}\hat{\mathrm{st}}_*.$$
This implies that \eqref{e4} equals
\beq\label{e5}
\frac{(-1)^D}{\deg \tilde{\mathrm{st}}}\tilde{\mathrm{st}}_*\circ \frac1{|G|}\tau_*{\hat{s}}_{\hat{\sigma}}^!\bq^*_{\hat{Y}}(v_1\otimes\cdots\otimes v_n\otimes [\CC^{N_\lambda}]).
\eeq
By Proposition \ref{c9}  together with \eqref{d21}, \eqref{e5} equals
\beq\label{e6}
\frac{(-1)^D}{\deg \tilde{\mathrm{st}}}\tilde{\mathrm{st}}_*\circ \frac1{|G|}{\tilde{s}}_{\tilde{\sigma}}^!\bq^*_{\tilde{Y}}\left(v_1\otimes\cdots\otimes v_n\otimes |G|\cdot(\sum_{k,l}c^{kl}e_k\otimes e_l)\right)
\eeq
\[ = \Omega_{g-1,n+2}\left(v_1\otimes\cdots\otimes v_n\otimes (\sum_{k,l}c^{kl}e_k\otimes e_l)\right).\]
This proves the splitting principle (2).

For (3), we consider the gluing morphism
$$u:\overline{M}_{g_1,n_1+1}\times \overline{M}_{g_2,n_2+1}\lra \rmgn$$
of the last nodes $p_+$ and $p_-$ of curves $(C_1,p_1,\cdots,p_{n_1},p_+)\sqcup (C_2,p_{n_1+1},\cdots,p_{n},p_-)$ in $\overline{M}_{g_1,n_1+1}\times \overline{M}_{g_2,n_2+1}$ with $g=g_1+g_2$ and $n=n_1+n_2$ and  
the commutative diagram
\footnotesize
\beq\label{e7}\xymatrix{
\sqcup_{\lambda\in G}Z_{(\ugamma_1,\lambda)}\times Z_{(\ugamma_2,\lambda^{-1})}\ar@{}[dr]^{\Box} & \hat{Z}_\ugamma \ar[l]\ar[r] & Z_\ugamma\ar@{=}[r] & Z_\ugamma\\
\sqcup_{\lambda\in G}X_{g_1,(\ugamma_1,\lambda)}\times X_{g_2,(\ugamma_2,\lambda^{-1})}\ar[u]_{\tilde{\bq}}\ar[d] & \hat{X}\ar@{}[dr]^{\Box}\ar[l]\ar[r]\ar[u]_{\hat{\bq}} \ar[d] & X'\ar@{}[dr]^{\Box}\ar[r]^{u_X} \ar[u]_{\bq_{X'}}\ar[d] & X_{g,\ugamma}\ar[d]^{\bp_X}\ar[u]_{\bq_X}\\
\sqcup_{\lambda\in G}S\urig_{g_1,(\ugamma_1,\lambda)}\times S\urig_{g_2,(\ugamma_2,\lambda^{-1})}\ar[d]_{\tilde{\mathrm{st}}} & \hat{S}\ar[l]_\tau\ar[r]^\kappa \ar[d]^{\hat{\mathrm{st}}} & S'\ar@{}[dr]^{\Box}\ar[r]\ar[d]^{\mathrm{st}'} & S\urig_{g,\ugamma}\ar[d]^{\mathrm{st}}\\
\overline{M}_{(g_1,g_2),(n_1,n_2)}\ar@{=}[r]&\overline{M}_{(g_1,g_2),(n_1,n_2)}\ar@{=}[r]&\overline{M}_{(g_1,g_2),(n_1,n_2)}\ar[r] & \rmgn.
}\eeq
\normalsize
Here $\hat{Z}_\ugamma=(\sqcup_{\lambda\in G} Z_\ugamma\times \CC^{N_\lambda})\times \chi^{-1}(-1)$, $\overline{M}_{(g_1,g_2),(n_1,n_2)}=\overline{M}_{g_1,n_1+1}\times \overline{M}_{g_2,n_2+1}$ and the squares with $\Box$ inside are fiber products. The morphism $\tau$ is  the additional choice of gluing isomorphism  in $\chi^{-1}(-1)\subset (\CC^*)^N$ of the fibers of the line bundles $\{L_i\}_{1\le i\le N}$ over the last two nodes. The morphism $\kappa$ is the additional choice of the rigidification of the fibers over the node.
Now the proof of (3) is almost the same as that for (2). 

For (4), we consider the diagram
\beq\label{e8}\xymatrix{
Z_\ugamma\ar@{=}[r] & Z_\ugamma\ar@{=}[rr] && Z_\ugamma \\
X_{g,(\ugamma,\mathbf{1})}\ar@{}[ddr]^{\Box}\ar[r] \ar[u]_{\hat{\bq}} \ar[dd] & X'\ar@{}[drrr]^{\Box}\ar[rr] \ar[u]_{\bq'} \ar[dd]\ar@{^(->}[dr] && X_{g,\ugamma}\ar[u]_{\bq} \ar[dd]\ar@{^(->}[dr] \\
&&M'\ar[rr]\ar[dl]\ar@{}[dr]^{\Box} && M\ar[dl] \\
S\urig_{g,(\ugamma,\mathbf{1})}\ar[r]^\kappa \ar[d]_{\hat{\mathrm{st}}} & S'\ar@{}[drr]^{\Box}\ar[d]_{\mathrm{st}'}\ar[rr] && S\urig_{g,\ugamma}\ar[d]^{\mathrm{st}}\\
\overline{M}_{g,n+1}\ar@{=}[r] & \overline{M}_{g,n+1}\ar[rr]^\theta && \rmgn
}\eeq
where the squares with $\Box$ are fiber products and $\kappa$ is the forgetting of the rigidification at the last marking which is \'etale. Here $\bq, \bq', \hat{\bq}$ are smooth by construction.
Now (4) follows from Proposition \ref{c17} by a similar computation like (2) above. 

\def\lloc{_{\mathrm{loc}}}
When all the markings are narrow so that $N_{\gamma_j}=0$ for all $1\le j\le n$, $B_\ugamma=Z_\ugamma=0$ and $\sigma=0$ with $Y=M$ in the setup of \S\ref{S4.1}. Since $[Y]$ is the image of the fundamental cycle of $Y$ by the cycle class map $h_Y$, by \eqref{c8}, we have  
$$s^!_\sigma[Y]=[Y]\cap e_\sigma(E,s)=h_S[X]\virt\lloc$$
where $[X]\virt\lloc$ denotes the cosection localized virtual cycle defined in \cite{CLL}.  
By \cite[Theorem 5.6]{CLL}, $(-1)^D[X]\virt\lloc$ coincides with the FJRW class for narrow sectors. 
Hence the cohomological field theory defined by \eqref{d40} coincides with that in \cite{FJR} for narrow sectors.
\end{proof}

There is another way to phrase our cohomological field theory \eqref{d40}. By using the isomorphism $\cH^\vee\cong \cH$ by the perfect pairing on $\cH$, we may think of the homomorphism \eqref{d35} as a class
\[  [X\urig_{g,\ugamma}]\virt\in H_*(S\urig_{g,\ugamma})\otimes \prod_{j=1}^n\cH_{\gamma_j} \]
of degree $6g-6+2n+2\sum_i\chi(L_i).$ Letting $\mathrm{so}:S\urig_{g,\ugamma}\to S_{g,\ugamma}$ denote the map forgetting the rigidification, we define 
\beq\label{e20} [X_{g,\ugamma}]\virt =\frac{(-1)^D}{\deg \mathrm{so}}\mathrm{so}_*[X\urig_{g,\ugamma}]\virt\eeq
where $D=-\sum_i\chi(L_i)$. 
\begin{theo}\label{e21}
The class $[X_{g,\ugamma}]\virt$ in \eqref{e20} satisfies all the axioms in \cite[Theorem 4.1.8]{FJR}.
\end{theo}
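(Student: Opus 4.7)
The strategy is to transfer the cohomological field theory axioms verified for $\Omega_{g,n}$ in Theorem \ref{a19} into the virtual class language of $[X_{g,\ugamma}]\virt$. The bridge is the identification, via the perfect pairing on $\cH$, of the homomorphism $\Phi$ in \eqref{d35} with a class $[X\urig_{g,\ugamma}]\virt\in H_*(S\urig_{g,\ugamma})\otimes \prod_j\cH_{\gamma_j}$, which becomes $[X_{g,\ugamma}]\virt$ after applying $\tfrac{(-1)^D}{\deg\,\mathrm{so}}\mathrm{so}_*$ as in \eqref{e20}. Since $\Omega_{g,n}$ is obtained from the same Gysin maps by further pushing forward via $\mathrm{st}_*$, each axiom of \cite[Theorem 4.1.8]{FJR} for $[X_{g,\ugamma}]\virt$ is a consequence of the corresponding identity between Gysin maps established, one step earlier in the argument, during the proof of Theorem \ref{a19}.

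First I would check the dimension/degree axiom: by construction $[X_{g,\ugamma}]\virt$ has degree $6g-6+2n+2\sum_i\chi(L_i)-\sum_j N_{\gamma_j}$, which agrees with the expected dimension $2\mathrm{vd}(X)-\sum_jN_{\gamma_j}$ noted after \eqref{d35} and with \cite[(52)]{FJR}. The $S_n$-symmetry axiom is immediate from the construction, as permutation of markings commutes with the formation of $M$, $E$, $s$, $\sigma$, the pullback $\bq_Y^*$, and the cosection localized Gysin map.

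Next I would translate the splitting axioms. The crucial point is that the proof of parts (2)--(4) of Theorem \ref{a19} established the identities at the level of cosection localized Gysin maps and pullbacks $\bq_Y^*$ via the diagrams \eqref{e2}, \eqref{e27}, \eqref{f52}, \eqref{e7}, \eqref{e8}; the pushforward $\mathrm{st}_*$ to $\rmgn$ enters only at the final step. Therefore the same chain of identities \eqref{e3}--\eqref{e6}, terminated before the $\mathrm{st}_*$ push and interpreted via the perfect pairing as an identity of virtual cycles on spin moduli, yields the separating and non-separating nodal decomposition axioms for $[X\urig_{g,\ugamma}]\virt$ in $H_*(S\urig_{g,\ugamma})\otimes\cH^{\otimes n}$; applying $\mathrm{so}_*$ gives the corresponding axioms for $[X_{g,\ugamma}]\virt$. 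The forgetful tail axiom follows analogously from part (4), and the pairing axiom from part (5).

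The main technical obstacle is bookkeeping of the normalizations $(\deg\,\mathrm{so})^{-1}$, $(\deg\,\mathrm{st})^{-1}$, $(-1)^D$, and the degrees of the étale covers $\kappa$ and $\tau$ appearing in \eqref{e2} and \eqref{e7}. These must be reconciled against the analogous constants on the glued moduli (where the genus, the marking data, and the line bundles $L_i$ are different, so $D$ changes), and against the extra factor of $|G|$ coming from $\tau$ and from \eqref{d21}. Once these numerical factors are verified to match the normalizations in \cite[Theorem 4.1.8]{FJR}—exactly the same cancellations already performed in the transitions \eqref{e4}--\eqref{e6}—all axioms follow formally from Theorem \ref{a19} and the compatibilities of Propositions \ref{c9}, \ref{b60}, \ref{c17}, \ref{e34}, and \ref{e35}.
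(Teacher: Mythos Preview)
Your strategy---translating each of the FJR axioms into an identity among cosection localized Gysin maps already established in the proof of Theorem \ref{a19}, read one step before the pushforward to $\rmgn$---is exactly the paper's approach, and your handling of the dimension, symmetry, splitting, forgetful, and pairing axioms is correct.

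There is one omission. The list in \cite[Theorem 4.1.8]{FJR} contains items that are not consequences of the cohomological field theory axioms (1)--(5) you invoke. Axiom (8) there, the sums-of-singularities compatibility, does not follow from the gluing diagrams \eqref{e2}, \eqref{e7}, \eqref{e8}; it follows instead from the construction of $\Phi$ together with the Thom--Sebastiani isomorphism \eqref{d18}--\eqref{d19}. Axiom (9), the rationality statement, follows from the observation that the cosection localized Gysin map $s^!_\sigma$ is defined over $\QQ$ (the Euler classes, cap products, and the decomposition theorem all work rationally). Neither requires a new idea, but your proposal does not account for them, and they are singled out explicitly in the paper's own proof.
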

\begin{proof}
Axioms (1)-(4) and (10) are easy and left to the reader. Axioms (5)-(7) are essentially the axioms for the cohomological field theory that we proved above. (8) follows from our construction and the Thom-Sebastiani isomorphism. (9) holds because in fact our cosection localized Gysin map is defined over $\QQ$.  
\end{proof}

\begin{rema}
We used the decomposition theorem of \cite{BBD} for the cosection localized Gysin map in Theorem \ref{b18}.
Instead we could also use Theorem \ref{f18} and avoid the decomposition theorem. 
\end{rema}

\subsection{Independence of choices}\label{S4.4}

The only remaining choice for our construction of quantum singularity theories is the resolution $[M\mapright{\alpha} F]$ of $R\pi_*(\oplus \cL_i)$ in \eqref{d34}.\footnote{Actually there is one more choice, namely that of the cosection $\sigma_M$
in a homotopy class \cite[\S4.2]{PV}. But two such choices are connected by a continuous path and the homomorphism $s^!_\sigma$, which is defined over $\QQ$,  is independent of the choice of the cosection in the homotopy class.} 
In this subsection, we check that our cohomological field theory is independent of the choice of the resolution of $R\pi_*(\oplus_i\cL_i)$.

If two resolutions $[M\mapright{\alpha} F]$ and $[M'\mapright{\alpha'} F']$ (with smooth $M\to B$ fiberwisely) by locally free sheaves are isomorphic in the derived category, it is easy to see that there is a third complex $[M''\to F'']$ of locally free sheaves dominating the two quasi-isomorphically by chain maps.
So we only need to consider the case where there is a chain map $f=(f_0,f_1):[M'\to F']\to [M\to F]$ which we may assume surjective by replacing $M'$ and $F'$ by $M'\oplus M$ and $F'\oplus M$.
Indeed we have quasi-isomorphisms $[M'\to F']\Leftrightarrow [M'\oplus M\to F'\oplus M]$ and a surjective quasi-isomorphic chain map $[M'\oplus M\to F'\oplus M]\to [M\to F]$ 
where the arrow $M'\oplus M\to F'\oplus M$ is $(a',a)\mapsto (\alpha'(a'),a)$,
the arrow $M'\oplus M\to M$ is $(a',a)\mapsto f_0(a')+a$ and the arrow $F'\oplus M\to F$ is $(b',a)\mapsto f_1(b')+\alpha(a)$.

Let $[K\to K]$ denote the kernel of the surjective chain map $f$ and let $s$ and $s'$ denote the section of a vector bundle $E$ over $Y$ and $E'$ over $Y'$ respectively by the recipe in \S\ref{S4.1}.
Then by Proposition \ref{e35}, $s^!_\sigma(\xi)=(s')^!_{\sigma'}f_0^{*}\xi$ for $\xi\in \ih_*(Y)$. Thus we obtain the independence.

\bigskip

\section{Gauged linear sigma models}\label{S5}

The quantum singularity theories discussed above can be generalized to the setting where $G$ is not necessarily finite but a reductive group. 

\subsection{The input data}
Let $V=\CC^N$ and let $\hat{G}$ be a subgroup of $GL(V)$ satisfying the following:
\begin{enumerate}
\item $\hat{G}$ contains the torus $T=\{\mathrm{diag}(t^{d_1},\cdots, t^{d_N})\}\cong \CC^*$, for nonnegative integers $d_1,\cdots,d_N$ whose greatest common divisor is $1$;
\item there is a surjective homomorphism $\chi:\hat{G}\to \CC^*$ whose restriction to $T$ is $\mathrm{diag}(t^{d_1},\cdots, t^{d_N})\mapsto t^d$ for some $d>0$.
\end{enumerate}

Let $G=\mathrm{ker}(\chi)$; it fits into the short exact sequence
$$1\lra G\lra \hat{G}\mapright{\chi} \CC^*\lra 1.$$
We assume that the elements of $G$ and $T$ commute in $\hat{G}$, i.e. $T$ belongs to the  centralizer $C(G)=\{t\in \hat{G}\,|\, tg=gt,\ \forall g\in G\}$ of $G$. 
Then the inclusion maps of $G$ and $T$ in $\hat{G}$ induce an isomorphism $G\times T/\mu_d\cong \hat{G}$ where $\mu_d$ is the cyclic group of $d$-th roots of unity in $\CC$. 

Let $w:V=\CC^N\to \CC$ be a polynomial satisfying the following:
\begin{enumerate}
\item[(i)] $w$ is nondegenerate, i.e. the critical locus $[\mathrm{Crit}(w)/G]$ in $[V/G]$ is proper where $\mathrm{Crit}(w)=\mathrm{zero}(dw)$;
\item[(ii)] $w$ is $(\hat{G},\chi)$-semi-invariant, i.e. $w(g\cdot z)=\chi(g)w(z)$ for $g\in \hat{G}$ and $z\in V$.  
\end{enumerate}

The condition (ii) above implies that 
\begin{enumerate}
\item[(a)] $w$ is quasi-homogeneous: $w(t^{d_1}z_1,\cdots, t^{d_N}z_N)=t^dw(z_1,\cdots,z_N)$;
\item[(b)] $w$ is $G$-invariant: $w(g\cdot z)=w(z)$ for $g\in G$. 
\end{enumerate}

\begin{exam}\label{f30}
Let $V=\CC^6$ and $\hat{G}=\{(t,\cdots,t,t_0)\}\cong (\CC^*)^2$. Let $\chi:\hat{G}\to \CC^*$ be
$\chi(t,t_0)=t^5t_0$. Then $G=\ker(\chi)=\{(t,\cdots, t, t^{-5})\}\cong \CC^*$. 
Let $w(z_1,\cdots,z_5,z_0)=z_0\sum_{i=1}^5z_i^5$. Then $w((t,t_0)\cdot z)=\chi(t,t_0)w(z)$ and 
$$\mathrm{Crit}(w)=\{(z_1,\cdots,z_5,0)\,|\,\sum_{i=1}^5 z_i^5=0\}\cup \{(0,\cdots,0,z_0)\,|\,z_0\in \CC\}.$$
Hence $[\mathrm{Crit}(w)/G]$ is proper. 
\end{exam}

Since $G$ is not necessarily finite, to construct a GIT quotient of $V$ by $\hat{G}$  
we pick a character $\hat{\theta}:\hat{G}\to \CC^*$. Let 
$$V^{ss}_{\hat{G},\hat{\theta}}=
\Bigl\{z\in V\,\Big|\ \parbox{22em}{for some $a>0$, there are 
$(\hat{G},\hat{\theta}^a)$-semi-invariant nonconstant polynomial $f$ such that $f(z)\ne 0$}\,\Bigr\}.$$
We call it the open set of $(\hat{G},\hat{\theta})$-semistable points in $V$. 
Let $\theta=\hat{\theta}|_G$. Likewise, $V^{ss}_{G,\theta}$ is defined similarly. We assume that
$$V^{ss}_{\hat{G},\hat{\theta}}=V^{ss}_{G,\theta}=:V^{ss}_\theta$$
and that there are no strictly semistable points. The GIT quotient of $V$ by $G$ with respect to $\theta$ is the quotient
$[V^{ss}_\theta/G]$ and the restriction of $w$ to $V^{ss}_\theta$ induces a regular function $w_\theta$ on the quotient. 

\begin{exam}\label{f31} (Continuation of Example \ref{f30}.)
Let $\hat\theta_+$ (resp. $\hat\theta_-$) be the homomorphism $\hat{G}\to \CC^*$ defined by $(t,t_0)\mapsto t$ (resp. $(t,t_0)\mapsto t_0$). Then $V_{\theta_+}^{ss}=(\CC^5-0)\times \CC$ and $V\git_{\theta_+}G\cong \sO_{\PP^4}(-5)=K_{\PP^4}$.
Tensoring the section $\sum_{i=1}^5z_i^5\in H^0(\sO_{\PP^4}(5))$ gives us the regular function $w_+$ on $V\git_{\theta_+}G$ induced by $w$.  
Similarly, $V_{\theta_-}^{ss}=\CC^5\times \CC^*$ and $V\git_{\theta_-}G\cong \CC^5/\mu_5$ on
which $w$ restricts to $w_-=\sum_{i=1}^5z_i^5$.
\end{exam}

The input datum for a GLSM is the triple $(w, G, \theta)$ for $w$, $G$ and $\theta$ as above. 

\subsection{The state spaces}

Fix the input datum $(w, G, \theta)$ for a GLSM. For $\gamma \in G$, let 
$$V_{\gamma,\theta}=[\{(z,g)\in V^{ss}_\theta\times G\,|\, g\in \langle\gamma\rangle, gz=z \}/G]
=[(V_\theta^{ss})^\gamma/C(\gamma)]$$
where $(V_\theta^{ss})^\gamma$ is the $\gamma$-fixed locus in $V_\theta^{ss}$,  $\langle \gamma\rangle$ denotes the conjugacy class of $\gamma$ and $C(\gamma)=\{ g\in G\,|\,g\gamma=\gamma g\}.$
Let $w_\gamma^\infty\subset V_{\gamma,\theta}$ denote the locus 
where $\mathrm{Re}\, w(z)>\!>0$. 
For a $G$-invariant closed $V'$ in $V$, $V'_{\gamma,\theta}$ is defined in the same way with $V$ replaced by $V'$. 

The state space for the GLSM is 
$$\cH=\bigoplus_{\langle\gamma\rangle \subset G}\cH_\gamma, \quad \cH_\gamma=\bigoplus_{\alpha\in \QQ} H^{\alpha-2\mathrm{age}(\gamma)+2q}(V_{\gamma,\theta},w_\gamma^\infty) 
$$
where $\langle \gamma\rangle$ runs through the conjugacy classes in $G$ and $q=\frac1{d}\sum_id_i$. 

\begin{exam}\label{f32} (Continuation of Example \ref{f31}.)
For $\theta_+$, $(V_{\theta_+}^{ss})^\gamma=\emptyset$ for $\gamma\ne 1$ and  $K_{\PP^4}$ for $\gamma=1$.
As $\mathrm{age}(1)=0$ and $q=1$, we have
$$\cH_+^i= H^{i+2}(K_{\PP^4},w_+^\infty)\cong H^i(Q_w)\cong H_{i+2}(K_{\PP^4}|_{Q_w})$$
where $Q_w\subset \PP^4$ is the Fermat quintic. Thus $\cH_+\cong H^*(Q_w)$ as graded vector spaces. 
As $w_+^{-1}(0)=\PP^4\cup K_{\PP^4}|_{Q_w}$, we have a homomorphism 
$$H_*(K_{\PP^4}|_{Q_w})\to H_*(w_+^{-1}(0))$$
which is injective by direct computation. 

For $\theta_-$, $(V_{\theta_-}^{ss})^\gamma=\emptyset$ for $\gamma\notin \mu_5$. 
When $\gamma=1$, $$\cH_-^i=H^5(\CC^5,w^\infty)^{\mu_5}\cong H^3(Q_w)
$$ for $i=3$ and $0$ otherwise.
When $\gamma=e^{2\pi\sqrt{-1}a/5}$ for $1\le a\le 4$, $\cH_-^i=\CC$ for $i=2a-2$ and $0$ otherwise. 
Summing these for $\gamma\in \mu_5$,
$\cH_-\cong H^*(Q_w)$ as graded vector spaces. 
From \S\ref{S2.5}, $H^3(Q_w)\cong H_5(w_-^{-1}(0))$. 
\end{exam}

For an $n$-tuple $\underline{\gamma}=(\gamma_1,\cdots,\gamma_n)\in G^n$, let 
$\underline{w}$ denote the Thom-Sebastiani sum of $w_{\gamma_j}$ for $j=1,\cdots, n$ where $w_{\gamma_j}$
is the restriction of $w$ to the $\gamma_j$-fixed locus.  
By the Thom-Sebastiani map \eqref{b17}, we have a homomorphism
\beq\label{f37} H_*(w_{\gamma_1}^{-1}(0))\otimes \cdots \otimes H_*(w_{\gamma_n}^{-1}(0))\lra H_*(\underline{w}^{-1}(0)).\eeq



\subsection{The moduli spaces}

Let $R=R_{g,\underline{\gamma}}$ denote the moduli stack of quadruples $(C,p_j, P, \varphi)$ where $(C,p_j)$ is a prestable twisted curve of genus $g$ with $n$ markings, $P$ is a principal $\hat{G}$-bundle of type $\underline{\gamma}$, and $\varphi$ is an isomorphism
$\chi_*(P)\cong P(\omega_C\ulog)$ as in \S\ref{S4.5}. Here the type $\underline{\gamma}\in G^n$ is defined as in \S\ref{S4.5}.

Then by \cite[\S4.2]{FJRgl}, for any $G$-invariant closed $V'\subset V$, we have a separated \DM stack of finite type  
$$LGQ^{\varepsilon,\theta}_{g,n}(V',\mathbf{d})=\bigsqcup_{\ugamma} LGQ^{\varepsilon,\theta}_{g,\ugamma}(V',\mathbf{d})$$ which represents the moduli functor
of quintuples $(C,p_j, P, \varphi, x)$ with the quadruple $(C,p_j, P, \varphi)$ lying in $R$
and $x\in H^0(P\times_{\hat{G}}V)$ of fixed degree $\mathbf{d}$ such that
$(C,p_j,x)$ is an $\varepsilon$-stable quasi-map into $V'$ for general $\varepsilon>0$. 
Here the $\varepsilon$-stability means that 
the length of the base locus of $x$ at any point is at most $1/\varepsilon$ and $\omega_C\ulog\otimes L^\varepsilon_{\hat{\theta}}$ is ample where $L_{\hat{\theta}}=P\times_{\hat{G}}\CC(\hat{\theta})$. 
If the GIT quotient  $V'/\!/_{\theta}G$ is proper, so is $LGQ^{\varepsilon,\theta}_{g,n}(V',\mathbf{d})$. 
In particular, as $[\mathrm{Crit}(w)/G]$ is proper by assumption, 
$LGQ^{\varepsilon,\theta}_{g,n}(\mathrm{Crit}(w),\mathbf{d})$ is a proper separated \DM stack. 

\begin{exam}\label{f33} (Continuation of Example \ref{f32}.)
A principal $\hat{G}$-bundle $P$ gives us two line bundles $L$ and $L_0$. An isomorphism of $\chi_*(P)$ with $P(\omega_C\ulog)$ is an isomorphism
$\varphi:L^5L_0\to \omega_C\ulog,$ Hence $L_0\cong L^{-5}\omega_C\ulog$. 
As $P\times_{\hat{G}}V=L^{\oplus 5}\oplus L_0$, $x\in H^0(L^{\oplus 5}\oplus L_0)$ consists of
5 sections $x_1,\cdots, x_5\in H^0(L)$ and a section $x_0\in  H^0(L^{-5}\omega_C\ulog)$. 

For $\theta_+$ and $\varepsilon=\infty$, $(x_1,\cdots, x_5)$ defines a stable map $C\to \PP^4$ and $x_0$ is the p-field 
(cf. \cite{CLp}). 
For $\theta_-$ and $\varepsilon=\infty$, $x_0:L^5\to \omega_C\ulog$ is an isomorphism and hence defines 
a $\mu_5$-spin structure on $C$. This is the case we studied in \S\ref{S4}.
\end{exam}

\subsection{A construction of GLSM invariants}
To construct GLSM invariants by cosection localization, we assume the following.
\begin{assu}\label{f35}
Suppose our GLSM given by $(w,G,\theta)$ satisfies the following.
\begin{enumerate}
\item For $\gamma\in G$, the state space $\cH_\gamma$ is a subspace of $H_*(w_{\gamma,\theta}^{-1}(0))$ where $w_{\gamma,\theta}:V_{\gamma,\theta}\to \CC$ is the regular function induced from $w$.
\item Let $X=LGQ^{\varepsilon,\theta}_{g,n}(V,\mathbf{d})$ and $S=LGQ^{\varepsilon,\theta}_{g,n}(\mathrm{Crit}(w),\mathbf{d})$. There is a smooth \DM stack $W$ and a vector bundle $E_W$ on $W$ such that $X=s_W^{-1}(0)$ for a section $s_W$ of $E_W$. Moreover there is a cosection $\sigma_W:E_W\to \sO_W$ satisfying $X\cap \sigma_W^{-1}(0)=S$. 
\item The evaluation map $q:W\to \prod_{j=1}^n V_{\gamma_j,\theta}$ is a smooth morphism. 
\item Let $\underline{w}$ be the Thom-Sebastiani sum of $w_{\gamma_j,\theta}$ for $1\le j\le n$. Then $\underline{w}\circ q=\sigma_W\circ s_W$.  
\end{enumerate}
\end{assu}

Under Assumption \ref{f35}, we can construct GLSM invariants by cosection localization (Theorem \ref{f18}) as follows. 
By item (1) and \eqref{f37}, we have a homomorphism 
\beq\label{f38} \cH_{\gamma_1}\otimes\cdots\otimes \cH_{\gamma_n}\lra H_*(\underline{w}^{-1}(0)).\eeq
By items (3) and (4), we have the pullback homomorphism 
\beq\label{f39} H_*(\underline{w}^{-1}(0)) \lra H_*((\underline{w}\circ q)^{-1}(0))=H_*((\sigma_W\circ s_W)^{-1}(0)).\eeq
By item (2), Assumption \ref{f21} holds. Hence by Theorem \ref{f18}, we have the cosection localized Gysin map
\beq\label{f40} s^!_\sigma:H_*((\sigma_W\circ s_W)^{-1}(0))\lra H_{*-2r}(S).\eeq
As $S=LGQ^{\varepsilon,\theta}_{g,n}(\mathrm{Crit}(w),\mathbf{d})$ is proper, the forgetful morphism 
$S\to \rmgn$ gives us the pushforward homomorphism 
\beq\label{f41} H_{*}(S)\lra H_*(\rmgn).\eeq

\begin{defi}\label{f36} Under Assumption \ref{f35}, we define 
$$\Omega_{g,n}:\cH^{\otimes n}\lra H_*(\rmgn)\cong H^*(\rmgn)$$
by composing \eqref{f38}, \eqref{f39}, \eqref{f40} and \eqref{f41} and summing them up for all types $\ugamma=(\gamma_1,\cdots,\gamma_n)$.
\end{defi}

We raise the following natural questions. 
\begin{ques}
(1) Does Assumption \ref{f35} hold for all GLSMs?

(2) Do the homomorphisms $\{\Omega_{g,n}\}$ in Definition \ref{f36} satisfy the axioms of cohomological field theory in \S\ref{S4.3}?
\end{ques}

Note that item (1) in Assumption \ref{f35} holds in the Fermat quintic case 
\beq\label{f57} w:\CC^6\git_{\theta_\pm}\CC^*\to \CC,\quad w(z_1,\cdots,z_5,z_0)=z_0\sum_{i=1}^5z_i^5\eeq
by Example \ref{f32} above. In fact, the proof of Lemma \ref{g0} also proves item (1) for hybrid models.  
For items (2)-(4), Ionut Cioncan-Fontanine, David Favero, Jeremy Guere, Bumsig Kim and Mark Shoemaker prove the following.
\begin{theo}\cite[\S4]{CFGK}
Let $(w,G,\theta)$ be a convex hybrid GLSM model and $\epsilon=\infty$. 
Then items (2)-(4) of Assumption \ref{f35} hold. 
\end{theo}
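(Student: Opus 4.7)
The plan is to follow the construction in \cite[\S4]{CFGK}. I will sketch the approach; the technical verification is carried out in loc. cit.

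For a convex hybrid GLSM, the target $V$ splits canonically as the total space of a convex vector bundle $E \to V_+$ (i.e.\ $V = V_+ \oplus V_-$ with $w$ linear along $V_-$), and $w$ is determined by a section $s_E \in H^0([V_+/G], E)$. First I would construct $W$ as the moduli stack of $\epsilon$-stable quasi-maps to $[V_+/G]$ of fixed type $\ugamma$ and degree $\mathbf{d}$, recording the $\hat{G}$-bundle $P$, the isomorphism $\varphi$, and the $V_+$-valued component $x_+$ of the section, but forgetting the $V_-$-valued component $x_-$. Convexity ensures the relevant obstruction pushforwards vanish, so $W$ is a smooth Deligne--Mumford stack.

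Next, letting $\pi:\mathcal{C}\to W$ be the universal curve and $\mathcal{E} = u^*E$ the pullback of $E$ along the universal quasi-map, I set
\[ E_W := R\pi_*\bigl(\mathcal{E}^\vee \otimes \omega_\pi^{\mathrm{log}}\bigr)^\vee, \]
which is locally free on $W$ by Serre duality and convexity. The missing $V_-$-component $x_-$ of a quasi-map into $V$ is precisely a section of $E_W^\vee$; dually, the section $s_E$ pulled back along $u$ gives a section $s_W$ of $E_W$, and its zero locus is exactly $X$, proving the first half of item (2). The cosection $\sigma_W:E_W\to\mathcal{O}_W$ is built by pairing against $x_-$, suitably extended to all of $W$ after resolving the universal complex to a two-term complex of locally free sheaves in the spirit of the Polishchuk--Vaintrob construction used for \eqref{d34}; a direct identification then shows $X\cap\sigma_W^{-1}(0) = S$, completing item (2).

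For item (3), the evaluation $q$ restricts the universal sections to the markings, and is smooth on each stratum of type $\ugamma$ because convexity eliminates the obstructions to prescribing the marking values freely. For item (4), the identity $\underline{w}\circ q=\sigma_W\circ s_W$ reduces to the fact that the linear-in-$V_-$ function $w$ evaluated at $(u(p_j), x_-(p_j))$ agrees with the pairing of $s_W$ with the cosection built from $x_-$; this is a formal manipulation once the objects are in place. The hard part will be the two-term resolution and the global extension of $\sigma_W$: arranging a resolution of $R\pi_*\mathcal{E}^\vee$ consistently across the moduli so that both $s_W$ and $\sigma_W$ are defined on $W$ (not merely on $X$), and so that the compatibility in (4) holds on the nose rather than up to higher-order terms. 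This is precisely the technical content of \cite[\S4]{CFGK}, and once it is established items (2)--(4) follow by unwinding the definitions.
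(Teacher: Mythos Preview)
The paper itself offers no proof of this statement; it is quoted as a result of \cite{CFGK}, so there is nothing in the paper to compare against beyond the citation. Your plan to sketch the CFGK construction is therefore reasonable in spirit, but the sketch contains a genuine directional error that makes it unworkable as stated.

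You take $W$ to be the moduli of $\epsilon$-stable quasi-maps to $[V_+/G]$, recording $x_+$ but \emph{forgetting} $x_-$. Then $X=LGQ^{\varepsilon,\theta}_{g,n}(V,\mathbf{d})$, which records \emph{both} $x_+$ and $x_-$, carries strictly more data than your $W$: it is (up to the usual derived issues) the total space of a sheaf over your $W$, not a closed substack of it. Consequently $X$ cannot be written as $s_W^{-1}(0)$ for any section of a bundle on your $W$. Concretely, with your choice of $s_W$ (the pullback of $s_E$ along the universal quasi-map), the zero locus $s_W^{-1}(0)$ is the locus of quasi-maps landing in $s_E^{-1}(0)\subset [V_+/G]$; this is not $X$ at all. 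The same confusion propagates to your description of $\sigma_W$ as ``pairing against $x_-$'': on your $W$ there is no $x_-$ to pair with.

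The construction in \cite{CFGK} (parallel to \S\ref{S4.1} here) instead \emph{enlarges} your $W$. Over the smooth base of quasi-maps to $[V_+/G]$ (smooth by convexity), one resolves $R\pi_*$ of the $V_-$-bundle by a two-term complex $[M_0\to F_0]$ of locally free sheaves; the ambient smooth stack $W$ is then the total space of $M_0$ over your base, $E_W$ is the pullback of $F_0$, and $s_W$ is the tautological section induced by $M_0\to F_0$. Its zero locus is precisely the locus where the point of $M_0$ is an honest section, i.e.\ $X$. The cosection $\sigma_W$ is then built from the partial derivatives of $w$ (exactly as in \cite[\S4.2]{PV} and \S\ref{S4.5} here), and items (3) and (4) follow from this setup. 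Once you reverse the roles so that $W$ contains (an over-approximation of) the $x_-$-data rather than forgetting it, the rest of your outline is on the right track.
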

Using this theorem, the authors of \cite{CFGK} construct fundamental matrix factorizations which produce GLSM invariants
by Fourier-Mukai type transformations and Hochschild homology. 
We refer to \cite{CFGK} for definitions of convex hybrid models.
Let us just mention that the Fermat quintic case \eqref{f57} is a 
convex hybrid model. Therefore, Definition \ref{f36} defines GLSM invariants in this case by cosection localization. 



\bigskip

\bibliographystyle{amsplain}

\end{document}